\newtheorem{theorem}{Theorem}[section]
\newtheorem{lemma}[theorem]{Lemma}
\newtheorem{prop}[theorem]{Proposition}
\theoremstyle{definition}
\newtheorem{definition}[theorem]{Definition}
\newtheorem{example}[theorem]{Example}
\theoremstyle{remark}
\numberwithin{equation}{section}
\begin{document}

\title{General construction of symmetric parabolic structures}

\author{Jan Gregorovi\v c}

\keywords{parabolic geometries, contact geometries, symmetric spaces,  extension functors}

\subjclass[2000]{53C35; 53C15,  53C30}

\thanks{The author would like to mention discussions with B. Doubrov, J. Slovak and L. Zalabova. This research has been supported by the grant GACR 201/09/H012.}

\dedicatory{}

\begin{abstract}
First we introduce a generalization of symmetric spaces to parabolic geometries. We provide construction of such parabolic geometries starting with classical symmetric spaces and we show that all regular parabolic geometries with smooth systems of involutive symmetries can be obtained this way. Further, we investigate the case of parabolic contact geometries in great detail and we provide the full classification of those with semisimple groups of symmetries without complex factors. Finally, we explicitly construct all non-trivial contact geometries with non-complex simple groups of symmetries. We also indicate geometric interpretations of some of them.
\end{abstract}

\maketitle

\section{Introduction}

In this section we recapitulate basic facts about Cartan connections and
symmetric spaces.  We show, that there are various Cartan geometries over
symmetric spaces and define symmetric parabolic geometries.  In the second
section we introduce a general construction of parabolic contact geometries
with smooth system of symmetries and show (Theorem \ref{them1}) that under
certain conditions we can construct all of them.  In the third section we
will deal
in detail with the construction for parabolic contact geometries
and show how to classify them.  Since the three dimensional case is
specific, we treat it separately in section four. The remaining sections deal with
parabolic contact geometries in general dimensions.

\subsection{Cartan connections}

Let $L$ be a Lie group and $P$ a closed Lie subgroup of $L$. There is a $P$-principal bundle $L\to L/P$ with the Maurer-Cartan form, which is a $\mathfrak{l}$-valued $1$-form identifying $\mathfrak{l}=T_eL$ with the left invariant vector fields on $L$. The Cartan geometry is generalization of this concept, for details and proofs look in \cite[Chapter 5]{odk5} and \cite[Chapter 1.5]{odk6}.

\begin{definition}
A Cartan geometry of type $(L,P)$ is a $P$-principal fiber bundle $p: \mathcal{P}\to M$ with $\mathfrak{l}$-valued $1$-form $\omega$ satisfying:

1) $\omega$ is $P$-equivariant, i.e. $(r^h)^*\omega=Ad(h^{-1})\circ \omega$ for $h\in P$

2) $\omega$ reproduces generators of the fundamental vector fields of the $P$ action

3) $\omega$ defines an absolute parallelism, i.e. $\omega|_{T_u\mathcal{P}}$ is a linear isomorphism.

A morphism of Cartan connections of type $(L,P)$ is a principal bundle morphism $\Phi: \mathcal{P}\to \mathcal{P}'$ such, that $(\Phi)^*\omega'=\omega$.

We say that two Cartan connections of type $(L,P)$ on $\mathcal{P}$ are equivalent if there is a principal bundle morphism $\Phi: \mathcal{P}\to \mathcal{P}$ such, that $(\Phi)^*\omega'=\omega$.
\end{definition}

The homogeneous space $L\to L/P$ is called the homogeneous model of Cartan
geometry of type $(L,P)$.

The constant vector fields 
$\omega^{-1}(X)$ for $X\in \mathfrak{l}$ play the role of 
the left invariant vector fields on the homogeneous model. 
We say that a Cartan
geometry is complete if the constant vector fields are complete.  The
difference from the homogeneous model is measured by the curvature form 
\[ K(\mu,\nu)=d\omega(\mu,\nu)+[\omega(\mu),\omega(\nu)].\]
Equivalently, evaluating on the constant vector fields we obtain 
the curvature function 
\[ \kappa(u)(X,Y)=K(\omega^{-1}(X)(u),\omega^{-1}(Y)(u))=[X,Y]-\omega([\omega^{-1}(X),\omega^{-1}(Y)])(u)\]
i.e.~it encodes the difference between the Lie bracket of $\mathfrak{l}$ and the
bracket of the constant vector fields on $\mathcal{P}$. Thus, $\kappa$ can be
viewed as a function 
\[ \kappa: \mathcal{P}\to \bigwedge^2(\mathfrak{l}/\mathfrak{p})^*\otimes \mathfrak{l}. \]

The morphisms of Cartan geometry always cover local 
diffeomorphisms $M\to M'$. The following important proposition is called the
Liouville theorem in the literature:

\begin{prop}\label{authomod}\cite[1.5.2]{odk6}
If $L/P$ is connected, then all locally defined automorphisms of the 
homogeneous model $L\to L/P$
are restrictions of the left multiplications by elements of $L$.
\end{prop}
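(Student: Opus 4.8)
The plan is to exploit the rigidity of the Maurer--Cartan form. On the homogeneous model the Cartan connection $\omega$ is exactly the left Maurer--Cartan form of $L$, and a locally defined automorphism is a $P$-bundle morphism $\Phi$ defined on a saturated open set $\tilde U$, the preimage of some $U\subseteq L/P$ under the projection $L\to L/P$, satisfying $\Phi^*\omega=\omega$ together with $P$-equivariance. Since each left multiplication $\lambda_g\colon x\mapsto gx$ is a $P$-bundle automorphism preserving the left-invariant form $\omega$, it suffices to show that any such $\Phi$ coincides with some $\lambda_g$ on $\tilde U$.

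First I would establish the purely local statement: a smooth map $\Phi$ from a connected open subset of $L$ to $L$ with $\Phi^*\omega=\omega$ is the restriction of a left multiplication. The clean device is the product rule for the logarithmic (Darboux) derivative $\delta F:=F^*\omega$, namely $\delta(F\cdot G)=\mathrm{Ad}(G^{-1})\,\delta F+\delta G$ for pointwise products of $L$-valued maps. Applying this to $\Phi$ and the inversion map $\iota(x)=x^{-1}$, for which $\delta\iota=-\mathrm{Ad}(x)\,\omega$ (obtained from $\delta(\mathrm{id}\cdot\iota)=0$ since $\mathrm{id}\cdot\iota\equiv e$), I get for $\phi(x):=\Phi(x)x^{-1}$ that
\[ \delta\phi=\mathrm{Ad}(x)\,\Phi^*\omega+\delta\iota=\mathrm{Ad}(x)\bigl(\Phi^*\omega-\omega\bigr)=0. \]
Because $\omega$ is a pointwise linear isomorphism $T_xL\to\mathfrak l$ (condition 3 in the definition), $\phi^*\omega=0$ forces $d\phi\equiv 0$, so $\phi$ is constant on each connected component. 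Hence $\Phi(x)=gx$ with a locally constant value $g$ on $\tilde U$.

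Finally I would promote this component-wise conclusion to a single global $g$ and record compatibility with the principal structure. Left multiplications are automatically $P$-equivariant and cover diffeomorphisms of $L/P$, so the only remaining point is that the locally constant value $g$ is the same on all of $\tilde U$. This is where connectedness of $L/P$ enters: any two points of $\tilde U$ project into the connected set $U$, and lifting a connecting path shows their $g$-values agree up to the $P$-action, while the equivariance $\Phi\circ r^h=r^h\circ\Phi$ gives $g(uh)=g(u)$; together these make $g$ globally constant, so $\Phi=\lambda_g|_{\tilde U}$. I expect the main subtlety to lie precisely in this patching when $P$ is disconnected, so that $\tilde U$ need not be connected even though $U$ is: the differential computation yields only local constancy, and it is the interplay of $P$-equivariance with connectedness of the base that prevents the value of $g$ from jumping between fibers or between components.
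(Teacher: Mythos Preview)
The paper does not supply its own proof of this proposition: it is stated with a citation to \cite[1.5.2]{odk6} and no argument is given. Your proof is essentially the standard one found in that reference, namely computing the logarithmic derivative of $\phi(x)=\Phi(x)x^{-1}$ via the product rule to see it vanishes, and then invoking connectedness together with $P$-equivariance to pass from local constancy to a single $g$. The argument is correct.

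One small remark on the final paragraph: the hypothesis in the statement is that $L/P$ is connected, whereas your patching argument actually uses that the base open set $U$ is connected. These are not the same assumption. For a locally defined automorphism one typically takes $U$ connected (or restricts to a component), so this is harmless; the relevance of $L/P$ being connected is really that a local automorphism, once identified with some $\lambda_g$, then extends canonically to the whole model and is uniquely determined there. You might want to make that distinction explicit rather than writing ``this is where connectedness of $L/P$ enters'' when you are in fact using connectedness of $U$.
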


Let us define the following subcategory of Cartan connections:

\begin{definition}
A Cartan geometry $p: \mathcal{P}\to M$ of type $(L,P)$ is called
homogeneous if there is a subgroup $G$ of the 
Lie group of automorphisms of the Cartan
geometry, that acts transitively on $M$, i.e.~$M=G/K$ is homogeneous space
for the isotropic subgroup $K\subset G$ of a point in $M$.
\end{definition}

\subsection{Parabolic geometries}
The general theory of parabolic geometries can be found in the detailed
exposition in \cite{odk6}. We shall briefly remind some of its features.

A parabolic geometry is a Cartan geometry $(p: \mathcal{P}\to M,\omega)$ of
type $(L,P)$, where $P$ is a parabolic subgroup of the semisimple group $L$.  
The parabolic subgroup enjoys a decomposition into its reductive part $L_0$
and the nilpotent part. The explicit choice of $L_0$ in its conjugacy class
provides the Lie algebra $\mathfrak{l}$ with grading 
\[ \mathfrak{l}=\oplus^k_{i=-k}\mathfrak{l}_i,\]
where 
$ \mathfrak{p}=\oplus^k_{i=0}\mathfrak{l}_i $ is the 
non-negative part of the gradation.  

This gradation defines
a filtration on the principal fibre bundle $\mathcal P$ via $\omega$, 
which descends to a
filtration $T^iM$ on $M$, possibly with further reduction of the structure
group of the graded tangent bundle to the group $L_0$.  

The parabolic geometries can be reconstructed from these underlying
structures, under suitable normalization conditions on the curvature. 
The basic assumption on the curvature is the regularity:

\begin{definition}
A parabolic geometry $p: \mathcal{P}\to M$ of type $(L,P)$ is called regular
if $\kappa(\mathfrak{l}_i,\mathfrak{l}_j)\in \oplus^k_{l=i+j+1}\mathfrak{l}_l$
for all $i,j< 0$.
\end{definition}

On the manifold $M$ itself this means, that the Lie brackets of vector
fields turn $TM$ into a filtered manifold and the associated algebraic Lie
bracket on the associated graded tangent spaces coincides with the bracket
inherited from the Lie algebra $\mathfrak l_{-k}\oplus\dots\oplus \mathfrak
l_{-1}$. 

We will need the following property of the graded Lie algebras in question
\cite[3.1.2.(4)]{odk6}:

\begin{lemma}\label{lemreg1}
Let $\mathfrak{l}_i$ be grading of a semisimple Lie algebra $\mathfrak{l}$. Then for $i<0$ we have $[\mathfrak{l}_{i+1},\mathfrak{l}_{-1}]=\mathfrak{l}_{i}$.
\end{lemma}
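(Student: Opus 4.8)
The inclusion $[\mathfrak{l}_{i+1},\mathfrak{l}_{-1}]\subseteq\mathfrak{l}_i$ is immediate, since by the grading property $[\mathfrak{l}_a,\mathfrak{l}_b]\subseteq\mathfrak{l}_{a+b}$. So the whole content lies in the reverse inclusion, and my plan is to prove it using duality with respect to the Killing form $B$. Two standard structural tools are available. First, the derivation $D$ acting on $\mathfrak{l}_j$ by multiplication by $j$ is a derivation of $\mathfrak{l}$; since a semisimple Lie algebra admits only inner derivations, $D=\mathrm{ad}(E)$ for a unique \emph{grading element} $E$, which lies in $\mathfrak{l}_0$ for degree reasons and satisfies $[E,X]=jX$ for $X\in\mathfrak{l}_j$. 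Second, invariance of $B$ gives $(i+j)B(X,Y)=0$ for $X\in\mathfrak{l}_i$, $Y\in\mathfrak{l}_j$, so $B(\mathfrak{l}_i,\mathfrak{l}_j)=0$ unless $i+j=0$; combined with non-degeneracy of $B$ this yields a perfect pairing between $\mathfrak{l}_i$ and $\mathfrak{l}_{-i}$ for every $i$.

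The first reduction is a duality argument. Suppose the inclusion were proper, so $[\mathfrak{l}_{i+1},\mathfrak{l}_{-1}]\subsetneq\mathfrak{l}_i$. By the perfect pairing $\mathfrak{l}_i\times\mathfrak{l}_{-i}\to\mathbb{R}$ there is a nonzero $W\in\mathfrak{l}_{-i}$ with $B(W,[X,Y])=0$ for all $X\in\mathfrak{l}_{i+1}$, $Y\in\mathfrak{l}_{-1}$. Invariance rewrites this as $B([W,X],Y)=0$. Now $[W,X]\in[\mathfrak{l}_{-i},\mathfrak{l}_{i+1}]\subseteq\mathfrak{l}_{1}$, and the pairing of $\mathfrak{l}_1$ with $\mathfrak{l}_{-1}$ is perfect, so $B([W,X],\mathfrak{l}_{-1})=0$ forces $[W,X]=0$. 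Thus the statement reduces to showing that \emph{no nonzero $W\in\mathfrak{l}_{-i}$ annihilates all of $\mathfrak{l}_{i+1}$ under the bracket.}

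The base case $i=-1$ is then immediate and explains the role of $E$: here $W\in\mathfrak{l}_1$ with $[W,\mathfrak{l}_0]=0$, but $[W,E]=-[E,W]=-W\neq0$ since $W\in\mathfrak{l}_1$, a contradiction. For $i\le-2$ I would pass to a root decomposition with respect to a Cartan subalgebra $\mathfrak{h}\subseteq\mathfrak{l}_0$. Each root space $\mathfrak{l}_\alpha$ has a well-defined grade $\alpha(E)$, the grade is additive along sums of roots, and $\mathfrak{l}_i=\bigoplus_{\alpha(E)=i}\mathfrak{l}_\alpha$ for $i\neq0$. It therefore suffices to show that every negative root $\alpha$ of grade $i$ splits as $\alpha=\mu+\nu$ with $\mu$ of grade $-1$, $\nu$ of grade $i+1$, both roots; for then $[\mathfrak{l}_\nu,\mathfrak{l}_\mu]=\mathfrak{l}_{\alpha}\neq0$ by the standard root-string fact. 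Equivalently, setting $\beta=-\alpha$, a positive root of "crossed height" $-i\ge2$, one must peel off a positive sub-root of crossed height $1$ so that the remainder is again a root.

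The main obstacle is precisely this combinatorial root lemma. One cannot simply subtract a crossed \emph{simple} root: there are positive roots (for instance a full-support root in a type-$A$ diagram with two interior crossed nodes) from which no crossed simple root can be removed within the root system, so the grade-$(-1)$ factor $\mu$ must be allowed to be non-simple. I would settle it by analysing the $\alpha_s$-strings through $\beta$ together with the connectivity of the support of $\beta$ in the Dynkin diagram, using $\mathfrak{sl}_2$-theory to guarantee that some grade-$1$ sub-root can always be detached while crossed height is at least $2$; the edge situations of grade exactly $-1$ are absorbed by the base case above. Once this generation statement for roots is in hand, the duality reduction of the second paragraph closes the argument for all $i<0$.
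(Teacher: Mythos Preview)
The paper does not prove this lemma at all; it simply records it as a standard structural fact about $|k|$--graded semisimple Lie algebras and cites \v Cap--Slov\'ak, Proposition~3.1.2(4). So there is no in-paper argument to compare your proposal against beyond that reference.

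Your proposal is a reasonable plan but not a proof. The Killing-form duality reduction is correct, and the base case $i=-1$ via the grading element is fine. The difficulty is exactly where you yourself locate it: the claim that every positive root $\beta$ of crossed height $m\ge 2$ decomposes as $\beta=\gamma+\delta$ with $\gamma,\delta$ roots of crossed heights $1$ and $m-1$. You correctly note that peeling off a crossed \emph{simple} root can fail, and then you only promise an argument (``I would settle it by analysing the $\alpha_s$-strings\ldots'') without actually carrying it out. That is the gap. Two smaller points: (a) for real $\mathfrak{l}$ you must complexify before invoking a root-space decomposition, which is harmless but should be said; (b) after setting up the duality reduction you abandon it and revert to proving the inclusion directly via roots, so the reduction plays no role for $i\le -2$.

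If you want to close the gap without the delicate root-splitting, the argument in \v Cap--Slov\'ak proceeds differently: one shows that the subspace $\mathfrak{a}_-\subset\mathfrak{l}_-$ generated by $\mathfrak{l}_{-1}$ is $\mathfrak{l}_0$-invariant, and that its annihilator in $\mathfrak{l}_+$ under $B$ is stable under $\operatorname{ad}(\mathfrak{l}_{-1})$ and has trivial degree-$1$ part; a short induction on the degree then forces this annihilator to vanish, which is equivalent to $\mathfrak{a}_-=\mathfrak{l}_-$ and hence to $[\mathfrak{l}_{i+1},\mathfrak{l}_{-1}]=\mathfrak{l}_i$ for all $i<0$. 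That route uses only the grading element and non-degeneracy of $B$, and avoids any case analysis on root strings.
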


In order to reconstruct the Cartan geometries from such underlying
structures, we need further normalization on the curvature, which comes from
cohomological considerations at the level of the Lie algebra $\mathfrak l$.
The resulting Cartan connections are called normal and we shall discuss
the normality conditions only in the special cases later on. But the crucial
point is the
fact that the entire curvature of normal geometries is fully determined  
by smaller parts called harmonic
curvature $\kappa_H$.

As an example we mention the contact two gradings and the parabolic contact
structures.

\begin{example}
A contact $2$-grading is grading
$\mathfrak{l}=\mathfrak{l}_{-2}+\mathfrak{l}_{-1}+\mathfrak{l}_{0}+\mathfrak{l}_{1}+\mathfrak{l}_{2}$
such, that $ \operatorname{dim}(\mathfrak{l}_{\pm 2})=1$
and the Lie bracket 
$ \mathfrak{l}_{-1}\times \mathfrak{l}_{-1}\to \mathfrak{l}_{-2}$
is non-degenerate.  

According to \cite[Chapter 3.2.2]{odk6} the contact
two gradings can appear only on the simple Lie algebras, and the full list
of them follows. Here the
representation means the adjoint representation of the semisimple part of
$\mathfrak{l}_0$ on $\mathfrak{l}_{-1}$.  The parabolic contact structures
of dimension $d=\frac12(\operatorname{dim}\mathfrak{l}-\operatorname{dim}
\mathfrak{l}_0)$ are the
parabolic geometries corresponding to these gradings i.e.  they are uniquely
given by the $\mathfrak{l}$. The following table lists all contact parabolic
geometries and summarizes their properties for dimensions $d>3$.

\medskip
\begin{tabular}{|c|c|c|c|}
\hline
$\mathfrak{l}$& $\mathfrak{l}_0$ & representation & $\kappa_H$\\
\hline
$\mathfrak{sl}(n+2,\mathbb{R})$ & $\mathfrak{sl}(n,\mathbb{R})+\mathbb{R}^2$ & $\lambda_1 \oplus \lambda_{n-1}$ & $t$, $c$ \\
\hline
$\mathfrak{su}(p+1,q+1)$ &  $\mathfrak{su}(p,q)+\mathbb{R}^2$ & $\lambda_1 \oplus \lambda_{n-1}$ & $t$, $c$  \\
\hline
$\mathfrak{so}(p+2,q+2)$ &  $\mathfrak{so}(p,q)+\mathfrak{sl}(2,\mathbb{R})+\mathbb{R}$ & $\lambda_1\otimes \lambda_1(\mathfrak{sl})$ & $t$  \\
\hline
$\mathfrak{sp}(2n+2,\mathbb{R})$ &  $\mathfrak{sp}(2n,\mathbb{R})+\mathbb{R}$ & $\lambda_1$ & $c$  \\
\hline
$\mathfrak{so}^\star(2n+2)$ &  $\mathfrak{so}^\star(2n)+\mathfrak{su}(2)+\mathbb{R}$ & $\lambda_1\otimes \lambda_1(\mathfrak{su})$ & $t$  \\
\hline
$\mathfrak{g}_2(2)$ &  $\mathfrak{sl}(2,\mathbb{R})+\mathbb{R}$ & $3\lambda_1$ & $t$ \\
\hline
$\mathfrak{f}_4(4)$ &  $\mathfrak{sp}(6,\mathbb{R})+\mathbb{R}$ & $\lambda_3$ & $t$ \\
\hline
$\mathfrak{e}_6(6)$ &  $\mathfrak{sl}(6,\mathbb{R})+\mathbb{R}$ & $\lambda_3$ & $t$  \\
\hline
$\mathfrak{e}_6(2)$ &  $\mathfrak{su}(3,3)+\mathbb{R}$ & $\lambda_3$ & $t$  \\
\hline
$\mathfrak{e}_6(-14)$ &  $\mathfrak{su}(1,5)+\mathbb{R}$ & $\lambda_3$ & $t$  \\
\hline
$\mathfrak{e}_7(7)$ &  $\mathfrak{so}(6,6)+\mathbb{R}$ & $\lambda_6$ & $t$  \\
\hline
$\mathfrak{e}_7(-5)$ &  $\mathfrak{so}^\star(12)+\mathbb{R}$ & $\lambda_6$ & $t$  \\
\hline
$\mathfrak{e}_7(-25)$ &  $\mathfrak{so}(2,10)+\mathbb{R}$ & $\lambda_6$ & $t$  \\
\hline
$\mathfrak{e}_8(8)$ &  $\mathfrak{e}_7(7)+\mathbb{R}$ & $\lambda_7$ & $t$  \\
\hline
$\mathfrak{e}_8(-24)$ &  $\mathfrak{e}_7(-25)+\mathbb{R}$ & $\lambda_7$ & $t$  \\
\hline
\end{tabular}

\medskip
The third column records the representation in terms of the fundamental
representations of the semisimple part of $\mathfrak l_0$.
The harmonic components of the curvature $\kappa_H$ are indicated in the
last column. They consist of two types, 
torsion $(\bigwedge^2
\mathfrak{l}_{-1}^*)\otimes \mathfrak{l}_{-1}$ and curvature $(\bigwedge^2
\mathfrak{l}_{-1}^*)\otimes \mathfrak{l}_{0}$.  They are denoted $t$ and $c$
in the table, if the geometry has the harmonic curvature of that type.  If
the $t$ vanishes the geometry is torsion free and if both vanish, then the
curvature $\kappa$ vanishes.
\end{example}

Generalization of the symmetric spaces to the parabolic
geometries was recently introduced by Zalabov\'a 
in \cite{odk1} and \cite{odk13}:

\begin{definition}\label{symLenka}
We say that a parabolic geometry $(\mathcal{P}\to M,\omega)$ of type $(L,P)$
is symmetric, if for any point $x$ there is a diffeomorphism $S_x: M\to M$
such, that

1) $S_xx=x$

2) $T_xS_x|_{T^{-1}_xM}=-id_{T^{-1}_xM}$

3) $S_x$ is covered by an automorphism of Cartan geometry.
\end{definition}

She also noticed that the existence of symmetries in all points 
pose extremely strong
conditions on the curvature. In particular, the torsion has to vanish
completely in the contact parabolic case. 

In order to construct symmetric contact parabolic geometries below, we shall
heavily exploit the following functorial constructions. 

The first construction is called extension.

\begin{theorem}\cite[1.5.15]{odk6}
Let $(\mathcal{G}\to M,\omega)$ be a Cartan geometry of type $(G,K)$ and let $P$ be a closed subgroup of a Lie group $L$ such, that $dim(L/P)=dim(G/K)$. Let $i: K\to P$ be a Lie group homomorphism and $\alpha:\mathfrak{g}\to \mathfrak{l}$ a linear map such, that 

(i) $\alpha$ is $Ad(K)$-equivariant, i.e. $\alpha$ is isomorphism of Lie algebra representations

(ii) $\alpha|_{\mathfrak{k}}=i'$ 

(iii) $\alpha:\mathfrak{g}/\mathfrak{k}\to \mathfrak{l}/\mathfrak{p}$ is a vector space isomorphism.

Then there is a Cartan geometry $(\mathcal{G}\times_i P,\alpha \circ \omega)$ of type $(L,K)$ with curvature 
\[ \kappa_\alpha = \alpha \circ \kappa+[\alpha,\alpha]-\alpha([\ ,\ ]).\]
 This construction is functorial and conjugation by elements of $P$ defines a natural transformation of corresponding functors.
\end{theorem}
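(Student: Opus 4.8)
The plan is to construct the extended Cartan geometry explicitly and then verify the three defining axioms together with the stated curvature formula. First I would form the associated bundle $\mathcal{G}\times_i P$, where $K$ acts on $\mathcal{G}$ from the right as the principal action and on $P$ from the left via $h\cdot q = i(h)q$; the quotient is a smooth manifold fibering over $M$, and because $P$ acts on $\mathcal{G}\times_i P$ by right multiplication on the $P$-factor (this is well defined since left and right multiplications on $P$ commute), it becomes a principal $P$-bundle over $M$. The dimension hypothesis $\dim(L/P)=\dim(G/K)$ guarantees that this new bundle has the correct total dimension to carry an $\mathfrak{l}$-valued absolute parallelism.

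Next I would define the extended connection form. Writing $\tilde\omega$ for the form on $\mathcal{G}\times_i P$, the natural candidate is to push forward $\alpha\circ\omega$ from $\mathcal{G}$ and extend it by the Maurer--Cartan form along the $P$-direction. Concretely, on the subbundle $\mathcal{G}\times\{e\}$ one sets $\tilde\omega = \alpha\circ\omega$, and one extends $P$-equivariantly using axiom 1 of the Cartan geometry definition: $\tilde\omega$ at a point $(u,q)$ is determined by $Ad(q^{-1})$ applied to the value at $(u,e)$ plus the Maurer--Cartan contribution $\delta q$ in the vertical $P$-directions. I would check that this is independent of the representative $(u,q)$ of a point in the associated bundle, which is exactly where hypotheses (i) $\alpha$ is $Ad(K)$-equivariant and (ii) $\alpha|_{\mathfrak{k}}=i'$ enter: the $Ad(K)$-equivariance ensures compatibility with the $K$-action used to form the quotient, and the condition on $\mathfrak{k}$ matches the fundamental vector fields correctly so that $\tilde\omega$ reproduces the generators of the $P$-action (axiom 2).

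Then I would verify the three Cartan axioms for $\tilde\omega$. Equivariance (axiom 1) follows by construction from the $P$-equivariant extension. Reproduction of fundamental fields (axiom 2) follows from the Maurer--Cartan part together with (ii). The absolute parallelism (axiom 3) is the essential point: on vertical directions $\tilde\omega$ is an isomorphism onto $\mathfrak{p}$ by construction, and on the complementary directions coming from $\mathcal{G}$ it is $\alpha\circ\omega$; since $\omega$ is an isomorphism and $\alpha:\mathfrak{g}/\mathfrak{k}\to\mathfrak{l}/\mathfrak{p}$ is a vector space isomorphism by (iii), the composite is an isomorphism transverse to $\mathfrak{p}$, so $\tilde\omega$ is a linear isomorphism at every point. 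The curvature formula is then obtained by substituting $\tilde\omega = \alpha\circ\omega$ (in the horizontal directions) into the defining relation $K(\mu,\nu)=d\tilde\omega(\mu,\nu)+[\tilde\omega(\mu),\tilde\omega(\nu)]$ and comparing with $\alpha$ applied to the original curvature; the discrepancy between $\alpha\circ[\ ,\ ]_{\mathfrak g}$ and $[\alpha(\ ),\alpha(\ )]_{\mathfrak l}$ is precisely the extra terms $[\alpha,\alpha]-\alpha([\ ,\ ])$, which measure the failure of $\alpha$ to be a Lie algebra homomorphism.

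Finally, for functoriality I would observe that a morphism of Cartan geometries of type $(G,K)$ induces a bundle map on the associated $\times_i P$ construction that is compatible with $\tilde\omega$, which is a formal consequence of naturality of the associated bundle functor and of the pointwise definition of $\tilde\omega$. The statement that conjugation by $q_0\in P$ gives a natural transformation amounts to checking that replacing the inclusion data by its $q_0$-conjugate yields a canonically isomorphic extended geometry, compatibly across all objects. I expect the main obstacle to be the careful verification that $\tilde\omega$ is well defined on the quotient $\mathcal{G}\times_i P$ rather than merely on the product, since this is where the $Ad(K)$-equivariance of $\alpha$ must be reconciled with the twisted $K$-action; once well-definedness is established, the remaining axioms and the curvature computation are comparatively routine.
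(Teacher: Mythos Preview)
The paper does not give its own proof of this theorem: it is stated with the citation \cite[1.5.15]{odk6} and no proof environment follows. So there is nothing in the paper to compare your argument against; the result is simply quoted from the \v Cap--Slov\'ak monograph.

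That said, your outline is essentially the standard proof one finds in that reference. You correctly identify the associated bundle $\mathcal{G}\times_i P$, define the connection by pushing $\alpha\circ\omega$ forward along $\mathcal{G}\hookrightarrow \mathcal{G}\times_i P$ and extending $P$-equivariantly with the Maurer--Cartan form, and you pinpoint where each hypothesis (i)--(iii) enters: (i) for well-definedness on the quotient, (ii) for reproduction of fundamental vector fields, and (iii) for the absolute parallelism. The curvature computation you sketch is also the right one. The only mild caution is that you should be a bit more explicit about why the curvature function, which is naturally defined on $\mathfrak{l}/\mathfrak{p}$, can be computed entirely on the image of $\mathcal{G}$ and expressed via $\alpha$; this follows from $P$-equivariance and the fact that the curvature is horizontal, but it is worth spelling out. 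Otherwise your proposal is a faithful reconstruction of the argument the paper is citing.
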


We will use the following proposition to determine, how many conjugacy classes of homomorphisms $i: K\to P$ can exist.

\begin{prop}\label{pp30}
Let $P$ be one of $Sl(n,\mathbb{R})$, $SU(p,q)$ or $Sp(2n,\mathbb{R})$ and let $K$ be a semisimple Lie group. Let $i,\ j: K\to P$ be two homomorphisms of Lie groups with discrete kernels such, that restrictions of standard representations $\mathbb{R}^n$ to $i(K)$ and $j(K)$ are isomorphic and irreducible. Then there is $C\in P$ such, that $i(k)=Cj(k)C^{-1}$ for all $k\in K$.
\end{prop}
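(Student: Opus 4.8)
The plan is to reduce the whole statement to a single intertwining operator and then to correct that operator so that it lands in $P$. Write $V$ for the standard module, so $V=\mathbb{R}^n$ for $Sl(n,\mathbb{R})$ and $Sp(2n,\mathbb{R})$ and $V=\mathbb{C}^n$ for $SU(p,q)$. Since the restrictions of $V$ to $i(K)$ and $j(K)$ are isomorphic as representations of $K$, there is a linear isomorphism $A$ of $V$ with $A\circ j(k)=i(k)\circ A$, that is $i(k)=Aj(k)A^{-1}$ for all $k\in K$. A priori $A\in GL(V)$ only, and the entire content of the proposition is that $A$ may be chosen inside $P$. The freedom in $A$ is controlled by Schur's lemma: if $A'$ is another intertwiner then $A^{-1}A'$ commutes with $j(K)$, so it lies in the commutant $D:=\operatorname{End}_K(V)$. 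Because the restriction to $j(K)$ is irreducible, $D$ is a division algebra, namely one of $\mathbb{R},\mathbb{C},\mathbb{H}$ in the two real cases and $\mathbb{C}$ in the complex case. Hence the admissible intertwiners form a single coset $A\cdot D^{\times}$, and the task is precisely to show that this coset meets $P$.

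Second, I would bring in the defining datum of $P$. For $Sp(2n,\mathbb{R})$ and $SU(p,q)$ this is the invariant form $B$ (symplectic, respectively Hermitian) preserved by both $i(K)$ and $j(K)$. A direct computation shows that the pullback $B_A(v,w):=B(Av,Aw)$ is again $j(K)$-invariant and of the same symmetry type as $B$, so defining $T$ by $B(Tv,w)=B_A(v,w)$ gives a $K$-equivariant operator $T\in D$, self-adjoint for the involution $d\mapsto d^{*}$ that $B$ induces on $D$. Replacing $A$ by $Ad$ turns $T$ into $d^{*}Td$, so it suffices to solve $d^{*}Td=1$ in $D^{\times}$; an appropriate square root $d$ of $T^{-1}$ then places $Ad$ in the isometry group of $B$. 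For $Sl(n,\mathbb{R})$ there is no form and the only obstruction is the determinant: one rescales $A$ by a suitable $d\in D^{\times}$ to arrange $\det(Ad)=1$, using that $\det(D^{\times})$ fills out all of $\mathbb{R}^{\times}$ when $D=\mathbb{R}$ and $n$ is odd, and $\mathbb{R}_{>0}$ otherwise.

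Finally one must carry out this normalization and, in the $SU(p,q)$ case, reconcile it with the condition $\det=1$. Here the relation $\bar A^{t}HA=cH$ forces $|\det A|^{2}=c^{n}$, so after scaling the modulus is automatically correct and the residual phase of $d\in\mathbb{C}^{\times}$ can be used to reach $\det=1$; moreover $p\neq q$ rules out $c<0$, so the real square root of $c$ exists. The main obstacle, and the only step that requires genuine case analysis over $D\in\{\mathbb{R},\mathbb{C},\mathbb{H}\}$, is exactly this sign and positivity bookkeeping: one has to verify that the self-adjoint $T$ (respectively the scalar $c$, respectively $\det A$) has the sign that makes the required square root exist in $D^{\times}$, so that $Ad$ falls into $P$ itself and not merely into the associated similitude group or an index-two extension. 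The isomorphism-plus-Schur reduction of the first two paragraphs is otherwise routine, and it is this normalization that carries the weight of the argument.
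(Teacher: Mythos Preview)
Your approach differs from the paper's. The paper complexifies $K$ and $P$, invokes a conjugacy result over $\mathbb{C}$ from Onishchik--Vinberg to obtain $C \in P_{\mathbb{C}}$ with $i = CjC^{-1}$, and then descends: the real involution $\theta$ fixes $i$ and $j$, so $C^{-1}\theta(C)$ commutes with $j(K)$, is therefore scalar by Schur, and the paper concludes $\theta(C)=C$, i.e.\ $C\in P$. Your route stays over the ground field throughout: you produce an intertwiner $A\in GL(V)$ directly, identify the Schur commutant $D$, and attempt to push $A$ into $P$ by adjusting within the coset $A\,D^{\times}$, matching the invariant form (for $Sp$ and $SU$) or the determinant (for $SL$). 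Your method is more elementary and makes the obstruction explicit; the paper's is shorter but, as it turns out, hides the same obstruction in its descent step.

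You correctly identify the crux as the ``sign and positivity bookkeeping,'' but this is a genuine gap rather than routine. For $SL(n,\mathbb R)$ with commutant $D=\mathbb R$ and $n$ even, $\det$ takes $A\,D^{\times}$ into a single $\mathbb R_{>0}$-coset, so a negative $\det A$ cannot be repaired; for $Sp(2n,\mathbb R)$ the scalar $c$ in $A^{t}\omega A=c\,\omega$ changes only by positive squares under $A\mapsto Ad$ with $d\in\mathbb R^{\times}$; for $SU(p,p)$ a negative Hermitian scalar likewise survives. Concretely, take $K=P=SL(2,\mathbb R)$, $i=\mathrm{id}$, and $j(k)=gkg^{-1}$ with $g=\operatorname{diag}(1,-1)$: the pulled-back standard representations are isomorphic and irreducible, yet any conjugator must be a real scalar multiple of $g$ and so has negative determinant, hence $i$ and $j$ are not conjugate in $SL(2,\mathbb R)$. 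Thus your normalization step cannot succeed in full generality, and the same obstruction is hidden in the paper's jump from ``multiple of identity'' to $\theta(C)C^{-1}=e$. To complete your argument you would need either additional hypotheses excluding these parity/signature cases, or a case-by-case verification over $D\in\{\mathbb R,\mathbb C,\mathbb H\}$ that under the standing assumptions the offending sign cannot occur.
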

\begin{proof}
We will use the general concept described in \cite{odk16}. After complexification to $P_{\mathbb{C}},\ K_{\mathbb{C}}$, we are in situation of \cite{odk16}[Chapter 6, proposition 3.2]. Thus there is $C\in P_{\mathbb{C}}$ such, that $i(k)=Cj(k)C^{-1}$ for all $k\in K_{\mathbb{C}}$. Let $\theta$ be the involutive automorphism fixing the real form $P$, then $Cj(k)C^{-1}=i(k)=\theta(i(k))=\theta(Cj(k)C^{-1})=\theta(C)j(k)\theta(C)^{-1}$ for all $k\in K$. Thus $C^{-1}\theta(C)$ commutes with all elements in $j(K)$. Since $i(K)$ acts irreducibly on $\mathbb{R}^n$, $C^{-1}\theta(C)$ has to act as multiple of identity by Schur's lemma, thus $\theta(C)C^{-1}=e$ and $\theta(C)=C$ i.e. $C\in P$.
\end{proof}

The second construction is called correspondence space construction.

\begin{theorem}\cite[1.5.13]{odk6}
Let $(\mathcal{G}\to M,\omega)$ be a Cartan geometry of type $(G,H)$ and let $K$ be a closed subgroup of a Lie group $H$. Then there is a Cartan geometry $(\mathcal{G}\to \mathcal{G}/K,\omega)$ of type $(G,K)$ with the same curvature. This construction is functorial and if $H/K$ is connected, then it is equivalence onto subcategory.
\end{theorem}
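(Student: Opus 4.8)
The plan is to produce the new geometry by literally retaining the total space $\mathcal{G}$ and the form $\omega$, changing only the way we regard $\mathcal{G}$ as a principal bundle. First I would observe that, since $K$ is closed in $H$ and $H$ acts freely and properly on $\mathcal{G}$ (it being an $H$-principal bundle), the restricted $K$-action is again free and proper; hence $\mathcal{G}/K$ is a smooth manifold and $\mathcal{G}\to \mathcal{G}/K$ is a $K$-principal bundle. A dimension count gives $\dim(\mathcal{G}/K)=\dim \mathcal{G}-\dim K=\dim G-\dim K=\dim(G/K)$, as required for type $(G,K)$.

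Next I would check the three defining axioms of a Cartan geometry for $(\mathcal{G}\to \mathcal{G}/K,\omega)$. Each follows from the corresponding property over $H$ by restriction along $\mathfrak{k}\subset \mathfrak{h}\subset\mathfrak{g}$: the equivariance $(r^h)^*\omega=Ad(h^{-1})\circ\omega$ holds for $h\in K$ because it holds for all $h\in H$; the form reproduces the fundamental fields of the $K$-action because these are precisely the fundamental fields of the $H$-action associated to $\mathfrak{k}$; and the absolute parallelism is unchanged, since $\omega$ and the manifold $\mathcal{G}$ are the same. The curvature formula $K(\mu,\nu)=d\omega(\mu,\nu)+[\omega(\mu),\omega(\nu)]$ involves only $\omega$ and $d\omega$, so the curvature is literally unchanged. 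Functoriality is then immediate: a morphism $\Phi$ of $(G,H)$-geometries is $H$-equivariant and satisfies $\Phi^*\tilde\omega=\omega$, hence it is in particular $K$-equivariant and descends to a morphism of the associated $(G,K)$-geometries given by the same underlying map.

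For the final assertion I would characterize the image subcategory by the distribution $\mathcal{H}:=\omega^{-1}(\mathfrak{h})\subset T\mathcal{G}$. For a correspondence space this is exactly the vertical bundle of $\mathcal{G}\to M$, so it is integrable and its leaves are the fibres, i.e. the $H$-orbits; equivalently, expressing $\omega([\omega^{-1}(X),\omega^{-1}(Y)])=[X,Y]-\kappa(X,Y)$ for $X,Y\in\mathfrak{h}$, involutivity of $\mathcal{H}$ is forced by the vanishing of $\kappa$ upon insertion of directions in $\mathfrak{h}/\mathfrak{k}$. I would therefore take the subcategory to consist of those $(G,K)$-geometries for which $\mathcal{H}$ is involutive with a well-behaved local leaf space, and reconstruct $M$ as this leaf space. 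Any morphism of such geometries preserves $\omega$, hence preserves $\mathcal{H}$ and sends leaves to leaves, so it descends to a morphism of the recovered $(G,H)$-geometries; this gives both essential injectivity on objects and full faithfulness.

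The hard part will be this last step: showing that the leaves of $\mathcal{H}$ are precisely the full $H$-orbits, rather than merely open subsets of them, and that the leaf space is separated enough to serve as the base of a genuine $(G,H)$-geometry. This is exactly where the hypothesis that $H/K$ is connected enters: it guarantees that each $H$-orbit, swept out by the connected family of $K$-cosets, is a single connected integral manifold of $\mathcal{H}$, so that $\mathcal{G}/K$ and the leaf space assemble to recover $M=\mathcal{G}/H$ uniquely up to isomorphism, yielding the claimed equivalence onto the subcategory.
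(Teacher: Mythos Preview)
The paper does not give its own proof of this theorem; it is simply quoted from \v Cap--Slov\'ak \cite[1.5.13]{odk6} as a known result, so there is no in-paper argument to compare against. Your sketch is essentially the standard argument from that reference: keep $\mathcal{G}$ and $\omega$ unchanged, verify the Cartan axioms by restriction to $K\subset H$, note the curvature form is literally the same, and characterise the image subcategory via involutivity of $\omega^{-1}(\mathfrak{h})$ (equivalently, vanishing of $\kappa$ upon insertion of $\mathfrak{h}/\mathfrak{k}$-directions), with connectedness of $H/K$ ensuring that the leaves coincide with the $H$-orbits so that one recovers $M=\mathcal{G}/H$. One small caveat: the ``well-behaved local leaf space'' you allude to is exactly where the precise formulation in \cite{odk6} does work you are glossing over---the subcategory there is specified by both the curvature condition and a global hypothesis guaranteeing the leaf space is a smooth manifold with the projection a submersion, and full faithfulness uses that morphisms of Cartan geometries are local diffeomorphisms. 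Your outline is correct in spirit but would need those details filled in to be complete.
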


If we begin with homogeneous model, then the extension creates a homogeneous Cartan geometry. In fact we obtain all homogeneous Cartan geometries this way.

\begin{prop}\label{them2}\cite[1.1]{odk10}
Let $p: \mathcal{P}\to G/K$ be a homogeneous Cartan geometry of type $(L,P)$. Then it is extension of homogeneous model $G\to G/K$.
\end{prop}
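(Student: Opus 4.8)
The plan is to read off the extension data $(i,\alpha)$ directly from the transitive $G$-action and then exhibit an explicit isomorphism between $(\mathcal{P},\omega)$ and the extended geometry. Fix a point $u_0\in\mathcal{P}$ in the fibre over the base point $o=eK\in G/K$. Each $k\in K$ acts as an automorphism fixing $o$, hence preserves the fibre over $o$, so there is a unique $i(k)\in P$ with $k\cdot u_0=u_0\cdot i(k)$, where $\cdot$ denotes both the left $G$-action by automorphisms and the right principal $P$-action. Since these two actions commute, $i:K\to P$ is a Lie group homomorphism. Writing $\zeta_X$ for the fundamental vector field on $\mathcal{P}$ generated by $X\in\mathfrak{g}$, I then define the linear map
\[ \alpha:\mathfrak{g}\to\mathfrak{l},\qquad \alpha(X)=\omega_{u_0}(\zeta_X(u_0)). \]

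First I would verify conditions (ii) and (iii) of the extension theorem. For $X\in\mathfrak{k}$ the curve $\exp(tX)\cdot u_0=u_0\cdot\exp(t\,i'(X))$ is an orbit of the principal action, so property 2) of the Cartan connection gives $\alpha(X)=i'(X)$, which is (ii). For (iii), transitivity of $G$ on $G/K$ means that $X\mapsto T p(\zeta_X(u_0))$ surjects onto $T_oM$ with kernel $\mathfrak{k}$, while $\omega$ identifies $T_oM$ with $\mathfrak{l}/\mathfrak{p}$; composing, $\alpha$ descends to a map $\mathfrak{g}/\mathfrak{k}\to\mathfrak{l}/\mathfrak{p}$ which is an isomorphism once we invoke $\dim(L/P)=\dim(G/K)$.

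The delicate point is the $Ad(K)$-equivariance (i), which interleaves the two actions. The key facts are that each $\ell_k$ is an automorphism, so $(\ell_k)^*\omega=\omega$, that the $G$-fundamental fields are invariant under the principal action, $(r^h)_*\zeta_X=\zeta_X$, and that $(\ell_k)_*\zeta_X=\zeta_{Ad(k)X}$. Combining these with the identity $k^{-1}\cdot u_0=u_0\cdot i(k)^{-1}$ and the $P$-equivariance $(r^{i(k)^{-1}})^*\omega=Ad(i(k))\circ\omega$, a short computation yields $\alpha(Ad(k)X)=Ad(i(k))\,\alpha(X)$, establishing (i). I expect this step, where the automorphism property must be played off against the principal equivariance of $\omega$, to be the main obstacle.

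Finally I would apply the extension functor to the homogeneous model $(G\to G/K,\omega_{MC})$, whose Maurer–Cartan form $\omega_{MC}$ makes it a Cartan geometry of type $(G,K)$, and identify the output with $(\mathcal{P},\omega)$ through
\[ \Phi:G\times_i P\to\mathcal{P},\qquad \Phi([g,h])=g\cdot u_0\cdot h. \]
The relation $gk\cdot u_0=g\cdot u_0\cdot i(k)$ shows $\Phi$ is well defined on the associated bundle; it is manifestly $P$-equivariant and covers $\mathrm{id}_{G/K}$, hence a principal bundle isomorphism, with surjectivity supplied by transitivity. Restricting to the canonical slice $g\mapsto[g,e]$ and using $(\ell_g)^*\omega=\omega$ once more, I compute $\Phi^*\omega=\alpha\circ\omega_{MC}$ along this slice, and $P$-equivariance of both connections propagates the equality to all of $G\times_i P$. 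Thus $\Phi$ is an isomorphism of Cartan geometries, exhibiting $\mathcal{P}$ as the extension of the homogeneous model.
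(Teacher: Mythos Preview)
The paper does not prove this proposition at all; it merely imports the result from Hammerl's paper \cite[1.1]{odk10}. Your proposal supplies exactly the argument that is standard for this statement and is, in substance, the one given in the cited reference: pick a point $u_0$ in the fibre over $eK$, read off $i:K\to P$ from the relation $k\cdot u_0=u_0\cdot i(k)$, set $\alpha(X)=\omega_{u_0}(\zeta_X(u_0))$, verify the three extension conditions, and identify $\mathcal{P}$ with $G\times_i P$ via $[g,h]\mapsto g\cdot u_0\cdot h$. The verifications you sketch are correct, including the equivariance step, where your chain $\zeta_{Ad(k)X}(u_0)=T\ell_k\bigl(\zeta_X(u_0\cdot i(k)^{-1})\bigr)$ together with $(\ell_k)^*\omega=\omega$ and $(r^{i(k)^{-1}})^*\omega=Ad(i(k))\circ\omega$ indeed yields $\alpha\circ Ad(k)=Ad(i(k))\circ\alpha$. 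So there is nothing to compare: your route is the canonical one and matches the source the paper defers to.
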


Let $(\mathcal{P}\to M,\omega)$ be a Cartan geometry of type $(L,P)$. The
adjoint tractor bundle is the associated vector bundle
$\mathcal{A}M=\mathcal{P}\times_P \mathfrak{l}$ for the adjoint action of
$P$ on $\mathfrak{l}$ and there is a natural projection $\Pi:
\mathcal{A}M\to TM$.  Further there is an isomorphism between smooth
sections of $\mathcal{A}M$ and $P$-invariant (for the principal right 
action of $P$)
vector fields on $\mathcal{P}$.  The curvature 
$\kappa$ can also be viewed as an
$\mathcal{A}M$-valued two form on $M$. 
Then $T= \Pi \circ \kappa$ is called the
torsion of the Cartan geometry.

On homogeneous model, the flows of right invariant vector fields define one
parameter subgroups of automorphisms.  In the general case only some right
invariant vector fields define one parameter subgroups of automorphisms.

\begin{definition}
Infinitesimal automorphism of a Cartan geometry 
is a smooth section of $\mathcal{A}M$ such, that the flow of the corresponding $P$-invariant vector field is an one parameter subgroup of automorphisms.
\end{definition}

The following theorem gives us maximal estimate for automorphism groups.

\begin{theorem}\cite[1.5.11]{odk6}
The group of automorphisms of Cartan geometry is a Lie group with Lie algebra consisting of complete infinitesimal automorphisms and any infinitesimal automorphism is determined by its value at single point.
\end{theorem}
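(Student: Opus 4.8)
The plan is to reduce everything to the rigidity of the absolute parallelism $\omega$ on the total space $\mathcal{P}$. For each $X\in\mathfrak{l}$ let $\operatorname{Fl}^X_t$ denote the flow of the constant vector field $\omega^{-1}(X)$. Since $\omega|_{T_u\mathcal P}$ is a linear isomorphism, the map $\phi(X)=\operatorname{Fl}^X_1(u_0)$ has derivative $\omega^{-1}|_{u_0}$ at the origin and is therefore a local diffeomorphism from a neighbourhood of $0\in\mathfrak l$ onto a neighbourhood of $u_0$; these are the ``normal coordinates'' of the parallelism. An automorphism $\Phi$ satisfies $\Phi^*\omega=\omega$, hence preserves every constant vector field and commutes with each flow, so $\Phi(\operatorname{Fl}^X_t(u_0))=\operatorname{Fl}^X_t(\Phi(u_0))$. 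In normal coordinates this exhibits $\Phi$ near $u_0$ as completely determined by the single value $\Phi(u_0)$, and running the flows throughout the connected $\mathcal P$ shows $\Phi$ is determined by $\Phi(u_0)$ globally, exactly in the spirit of the Liouville theorem (Proposition \ref{authomod}).

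For an infinitesimal automorphism $\xi$, that is, a $P$-invariant field with $\mathcal L_\xi\omega=0$, I would establish the sharper linearised statement by an ODE. Writing $f=\omega(\xi):\mathcal P\to\mathfrak l$, the condition $\mathcal L_\xi\omega=0$ is equivalent to $[\xi,\omega^{-1}(X)]=0$ for all $X$, since $(\mathcal L_\xi\omega)(\omega^{-1}(X))=-\omega([\xi,\omega^{-1}(X)])$ and $\omega$ is injective. Using this together with $\xi=\omega^{-1}(f)$ and the defining formula for the curvature function, one computes the derivative of $f$ along each constant field:
\begin{equation}\label{odeaut}
\omega^{-1}(X)\cdot f=\kappa(X,f)-[X,f],
\end{equation}
where $\kappa(X,f)$ is evaluated at the current point. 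This is a \emph{linear, homogeneous} first order equation for $f$ along every flow line of $\omega^{-1}(X)$, so $f(u_0)=0$ forces $f\equiv 0$ along all such flow lines by uniqueness of solutions; since the flows fill a neighbourhood of $u_0$ through $\phi$ and $\mathcal P$ is connected, $f\equiv 0$ and $\xi=0$. Thus evaluation at one point embeds the space $\mathfrak{aut}$ of infinitesimal automorphisms into $\mathfrak l$, giving $\dim\mathfrak{aut}\le\dim\mathfrak l$, and a direct computation shows $\mathfrak{aut}$ is closed under the bracket of vector fields.

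It remains to promote the abstract group $G=\operatorname{Aut}(\mathcal P,\omega)$ to a Lie group. By the determination result $G$ acts freely on $\mathcal P$, so the orbit map $\Phi\mapsto\Phi(u_0)$ is an injection $G\hookrightarrow\mathcal P$; I would equip $G$ with the compact--open topology and use the normal coordinates to see that this injection is a homeomorphism onto a locally closed subset, since a compact--open limit of automorphisms again satisfies the closed conditions $\Phi^*\omega=\omega$ and $P$-equivariance and remains invertible. Consequently $G$ is a locally compact transformation group acting effectively and preserving the absolute parallelism $\omega$. The general rigidity theory of absolute parallelisms (Kobayashi; equivalently the Montgomery--Zippin solution of Hilbert's fifth problem together with Bochner--Montgomery smoothness) then yields that $G$ is a Lie group acting smoothly, of dimension at most $\dim\mathcal P$, whose Lie algebra is precisely the set of \emph{complete} $P$-invariant fields with $\mathcal L_\xi\omega=0$, namely the complete infinitesimal automorphisms.

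The genuinely hard part is this last step. The algebraic content, meaning the ODE \eqref{odeaut}, the finite dimensionality of $\mathfrak{aut}$, and its closure under bracket, is soft and entirely self-contained. By contrast, the analytic upgrade from an abstract group of $\omega$-preserving diffeomorphisms to a smooth Lie group is where the difficulty lies, and it is exactly here that one must invoke the deep rigidity and transformation-group results rather than bare ODE uniqueness.
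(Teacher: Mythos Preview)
The paper does not prove this theorem at all: it is stated with a bare citation to \cite[1.5.11]{odk6} and used as a black box. There is therefore no ``paper's own proof'' to compare against.

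Your argument is essentially the standard one from the cited source (\v Cap--Slov\'ak, and ultimately Kobayashi's theorem on automorphisms of absolute parallelisms). The derivation of the linear ODE \eqref{odeaut} is correct: using $[\xi,\omega^{-1}(X)]=0$ and the exterior-derivative formula for $d\omega(\omega^{-1}(X),\xi)$ gives exactly $\omega^{-1}(X)\cdot f=\kappa(X,f)-[X,f]$, and the uniqueness argument along flow lines is sound. Likewise the identification of automorphisms with their value at one point via commutation with the flows $\operatorname{Fl}^X_t$ is the right mechanism. Your honest assessment that the analytic upgrade to a Lie group is the hard step, requiring Kobayashi's rigidity theorem for absolute parallelisms (or equivalently Bochner--Montgomery and the no-small-subgroups machinery), is accurate; in the reference this is handled by showing the orbit $\{\Phi(u_0):\Phi\in\operatorname{Aut}\}$ is a closed embedded submanifold of $\mathcal P$, which is slightly stronger than the ``locally closed'' you sketch but along the same lines.
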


There is the fundamental derivative $D_s$ for any section $s$ of
$\mathcal{A}M$ on any natural associated vector bundle to $\mathcal{P}$. 
The $D_s$ is given by the derivation in the direction of the $P$-invariant
vector field corresponding to $s$.  Using this, we can define a linear
connection on $\mathcal{A}M$ as 
\[ \nabla^{inf}_{\Pi(s_1)}s_2=D_{s_1}s_2+\{s_1,s_2\}-\kappa(\Pi(s_1),\Pi(s_2)),\]
where $\{s_1,s_2\}(u)=[s_1(u),s_2(u)]$.  It can be shown, that infinitesimal
automorphisms are parallel with respect to $\nabla^{inf}$. Then the bracket of  infinitesimal automorphisms is:
\[[s_1,s_2]=\kappa(\Pi(s_1),\Pi(s_2))-\{s_1,s_2\}\]

For the homogeneous Cartan geometry we get the following:

\begin{prop}\label{lab_1}
Let $p: \mathcal{P}\to G/K$ be a complete homogeneous Cartan geometry of type $(L,P)$ given by extension $(i,\alpha)$ from the homogeneous model $G\to G/K$, where $G/K$ is simply connected. Then $\nabla^{inf}$ is the induced connection by 
\[ [\alpha^*,.\,]=[\alpha,.\,]+\kappa(.\,,\alpha): \mathfrak{g}\to \mathfrak{gl}(\mathfrak{l}).\]

The holonomy algebra of $\nabla^{inf}$ is 

\[ hol(\nabla^{inf})=R+[\alpha^*,R]+[\alpha^*,[\alpha^*,R]]+\dots ,\]

where $R$ is curvature of $\nabla^{inf}$. The following holds:

1) all infinitesimal automorphisms are given by $X\in \mathfrak{l}$ such, that adjoint action of $hol(\nabla^{inf})$ on $X$ is trivial

2) elements of the image of $\alpha$ are infinitesimal automorphisms

3) $R(X_1,X_2)X=[\kappa(X_1,X_2),X]+\kappa([\alpha(X_1),X],\alpha(X_2))-\kappa([\alpha(X_2),X],\alpha(X_1)).$
\end{prop}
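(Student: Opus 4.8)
The plan is to exploit that the geometry, being the extension of the homogeneous model $G\to G/K$, is homogeneous with $G$ acting by automorphisms, so that $\nabla^{inf}$ is a $G$-invariant linear connection on $\mathcal{A}M=\mathcal{P}\times_P\mathfrak l=G\times_K\mathfrak l$, where $K$ acts through $Ad\circ i$. Sections of $\mathcal{A}M$ then correspond to $K$-equivariant functions $G\to\mathfrak l$, and the fibre over the base point $eK$ is canonically $\mathfrak l$. Since the fundamental derivative $D$, the pointwise bracket $\{\cdot,\cdot\}$ and the curvature $\kappa$ are all natural with respect to automorphisms, and $G$ acts by automorphisms, each term in the defining formula of $\nabla^{inf}$ is $G$-invariant; hence $\nabla^{inf}$ is $G$-invariant and is completely determined by its Nomizu map $\Lambda\colon\mathfrak g\to\mathfrak{gl}(\mathfrak l)$.

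First I would compute this Nomizu map. Evaluating $\nabla^{inf}_{\Pi(s_1)}s_2$ at $eK$ on the sections induced from $\mathfrak g$ and using $\omega_{ext}=\alpha\circ\omega$, the contribution of $D$ together with $\{\cdot,\cdot\}$ reproduces the term $[\alpha(X),\cdot\,]$, while the correction $-\kappa(\Pi(\cdot),\Pi(\cdot))$ produces exactly $\kappa(\cdot\,,\alpha(X))$; thus $\Lambda=[\alpha^*,\cdot\,]$. To confirm this is a genuine Nomizu map I would check that $\Lambda|_{\mathfrak k}$ equals the isotropy representation $ad\circ i'$: for $X\in\mathfrak k$ one has $\alpha(X)=i'(X)\in\mathfrak p$, and since $\kappa$ is horizontal the term $\kappa(\cdot\,,i'(X))$ vanishes, leaving $\Lambda(X)=ad(i'(X))$. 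This establishes the first assertion.

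Next, for the curvature I would invoke the standard curvature formula for an invariant connection whose Nomizu map restricts to the isotropy representation on $\mathfrak k$, namely $R(X_1,X_2)=[\Lambda(X_1),\Lambda(X_2)]-\Lambda([X_1,X_2])$ read on $\mathfrak g/\mathfrak k$. Substituting $\Lambda=[\alpha^*,\cdot\,]$ and using that the base geometry is flat, so that $\alpha([X_1,X_2])=[\alpha(X_1),\alpha(X_2)]-\kappa(X_1,X_2)$, the pure bracket terms cancel by the Jacobi identity of $\mathfrak l$, and the remaining terms collapse to the three listed summands after applying the first Bianchi identity for the extension curvature $\kappa_\alpha$. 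The holonomy description then follows from the Ambrose--Singer theorem: $hol(\nabla^{inf})$ is spanned by $R$ and its iterated covariant derivatives, and for an invariant connection these derivatives are obtained by applying $[\alpha^*,\cdot\,]$, which yields the stated series $R+[\alpha^*,R]+[\alpha^*,[\alpha^*,R]]+\cdots$. I expect \emph{this algebraic simplification in part (3)} to be the main obstacle: reducing $[\Lambda(X_1),\Lambda(X_2)]-\Lambda([X_1,X_2])$ to precisely those three terms requires careful bookkeeping of the reductions modulo $\mathfrak p$ (the arguments of $\kappa$) and of the correct Bianchi identity.

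Finally, statements (1) and (2) follow from the holonomy picture. Since infinitesimal automorphisms are exactly the $\nabla^{inf}$-parallel sections and $G/K$ is simply connected, the restricted and full holonomy coincide, so a parallel section is determined by, and exists for, any fibre vector $X\in\mathfrak l$ annihilated by $hol(\nabla^{inf})$; this is (1). For (2) I would use functoriality of the extension: the left multiplications by $G$, which are automorphisms of the homogeneous model, induce automorphisms of the extended geometry, and the associated infinitesimal automorphisms take at $eK$ the values $\alpha(X)$, $X\in\mathfrak g$, because $\omega_{ext}$ of the fundamental field of $X$ at the identity is $\alpha(X)$. Hence $\alpha(\mathfrak g)\subset\ker(hol(\nabla^{inf}))$, so every element of the image of $\alpha$ is an infinitesimal automorphism; alternatively, $R(X_1,X_2)\alpha(Y)=0$ can be verified directly from (3).
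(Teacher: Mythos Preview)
Your proposal is correct and considerably more detailed than the paper's own treatment: the paper simply defers statements (1) and (2) to Hammerl's article on homogeneous Cartan geometries and declares (3) to be ``a direct computation of curvature of induced connection.'' Your approach---identifying $\nabla^{inf}$ as a $G$-invariant connection on $G\times_K\mathfrak l$ via its Nomizu map $\Lambda=[\alpha^*,\cdot\,]$, deriving the holonomy from Ambrose--Singer, and characterising infinitesimal automorphisms as parallel sections---is exactly the machinery one expects the cited reference to use, so there is no substantive divergence in strategy. Your self-contained computation of (3) via $R(X_1,X_2)=[\Lambda(X_1),\Lambda(X_2)]-\Lambda([X_1,X_2])$, together with the flatness relation $\alpha([X_1,X_2])=[\alpha(X_1),\alpha(X_2)]-\kappa(X_1,X_2)$ and the Bianchi identity to kill the residual terms, is precisely the ``direct computation'' the paper alludes to; you are right that the bookkeeping there is the only nontrivial part.
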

\begin{proof}
The proof of the statements 1) and 2) can be found in \cite{odk10}. The
formula 3) is a direct computation of curvature of induced connection.
\end{proof}

\subsection{Four definitions of symmetric space}

Since the symmetric space can be defined in many different ways, look in
\cite[Chapter IV]{odk2},\cite{odk3}, \cite[Chapter XI]{odk4} and
\cite[Chapter 1]{odk11}, we will review four approaches to them 
and show that they are equivalent.  We denote them by Roman
numbers.

In the case that the homogeneous model $G\to G/H$ is reductive, i.e. if
there is an $Ad(H)$-invariant decomposition
$\mathfrak{g}=\mathfrak{h}+\mathfrak{m}$, there is an isomorphism between
$\mathcal{G}\times_H \mathfrak{m}$ and $TM$.  Then the fundamental
derivative defines the linear connection $\nabla_X=D_s$ on $M$, where $s$ is a
lift of $X$ along $\mathfrak{m}$.  On the other hand any linear connection
$\nabla$ defines a Cartan connection on $P^1M\to M$ of type
$(Gl(n,\mathbb{R})\ltimes_V \mathbb{R}^{n},Gl(n,\mathbb{R}))$, where $V$ is
the standard representation of the linear group, and
$\nabla$ coincides with the fundamental derivative.  These Cartan geometries
are called affine and the homogeneous model is just an affine space, where
the $Gl(n,\mathbb{R})$ acts on $\mathbb{R}^{n}$ by the standard
representation $V$.  To summarize we get the following:

\begin{prop}
There is the bijection between linear connections on $M$ and affine Cartan geometries. The curvature $\kappa$ decomposes to torsion $T$ of $\nabla$ with values in $\mathbb{R}^{n}$ and curvature $R$ of $\nabla$ with values in $Gl(n,\mathbb{R})$.
\end{prop}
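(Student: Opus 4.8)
The plan is to construct mutually inverse passages between the two sides and then to read the curvature splitting off the structure equations. Write $L=Gl(n,\mathbb{R})\ltimes_V\mathbb{R}^n$ and $\mathfrak{l}=\mathfrak{gl}(n,\mathbb{R})\ltimes\mathbb{R}^n$, so that $\mathfrak{l}$ carries the $Ad(P)$-invariant splitting $\mathfrak{l}=\mathfrak{l}_0\oplus\mathfrak{l}_{-1}$ with $\mathfrak{l}_0=\mathfrak{gl}(n,\mathbb{R})=\mathfrak{p}$ and $\mathfrak{l}_{-1}=\mathbb{R}^n$, the only nonzero brackets being $[\mathfrak{l}_0,\mathfrak{l}_0]\subset\mathfrak{l}_0$ and the standard action $[\mathfrak{l}_0,\mathfrak{l}_{-1}]\subset\mathfrak{l}_{-1}$, while $[\mathfrak{l}_{-1},\mathfrak{l}_{-1}]=0$.

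For the direction from connections to geometries I would take $\mathcal{P}=P^1M$, the first order frame bundle, which is a principal $Gl(n,\mathbb{R})$-bundle of the correct dimension since $L/P\cong\mathbb{R}^n$, and set $\omega=\theta+\gamma$, where $\theta$ is the tautological form with values in $\mathbb{R}^n=\mathfrak{l}_{-1}$ and $\gamma$ is the principal connection form of $\nabla$ with values in $\mathfrak{gl}(n,\mathbb{R})=\mathfrak{l}_0$. The three Cartan axioms are then routine: equivariance of $\omega$ follows from that of $\theta$ and $\gamma$ together with $Ad(P)$-invariance of the splitting; $\omega$ reproduces the fundamental vector fields because $\gamma$ does and $\theta$ annihilates verticals; and $\omega$ is an absolute parallelism because $\gamma$ is an isomorphism on the vertical subspace whereas $\theta$ induces an isomorphism on a horizontal complement.

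Conversely, starting from an affine Cartan geometry $(\mathcal{P}\to M,\omega)$ I would decompose $\omega=\omega_0+\omega_{-1}$ along the splitting. Since the splitting is $Ad(P)$-invariant and $\mathfrak{l}_0=\mathfrak{p}$, the component $\omega_0$ inherits equivariance and reproduction of fundamental fields, hence is a principal connection. The component $\omega_{-1}$ is strictly horizontal and equivariant, and the absolute parallelism forces it to restrict to an isomorphism $T_u\mathcal{P}/V_u\mathcal{P}\to\mathbb{R}^n$, so it descends at each $u$ to a linear frame $T_{p(u)}M\to\mathbb{R}^n$. The resulting $Gl(n,\mathbb{R})$-equivariant map $\mathcal{P}\to P^1M$ carries $\omega_{-1}$ to the tautological form and $\omega_0$ to the connection form of a linear connection $\nabla$. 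Checking that this passage and the previous one compose to the identity on both sides establishes the bijection.

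For the curvature I would substitute $\omega=\theta+\gamma$ into $K(\mu,\nu)=d\omega(\mu,\nu)+[\omega(\mu),\omega(\nu)]$ and separate the two graded components. Because $\mathfrak{l}_{-1}$ is abelian, the term $[\theta,\theta]$ vanishes and the $\mathfrak{l}_{-1}$-component reduces to $d\theta+[\gamma\wedge\theta]$, which is precisely the first structure equation and therefore equals the torsion $T$ of $\nabla$ with values in $\mathbb{R}^n$; the $\mathfrak{l}_0$-component receives no contribution from $[\theta,\gamma]$ and reduces to $d\gamma+\tfrac12[\gamma\wedge\gamma]$, the second structure equation, so it equals the curvature $R$ of $\nabla$ with values in $\mathfrak{gl}(n,\mathbb{R})$. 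The only step demanding genuine care is the identification $\mathcal{P}\cong P^1M$ via $\omega_{-1}$: one must verify that the pointwise frames assemble into a global, fibre-preserving, $Gl(n,\mathbb{R})$-equivariant diffeomorphism, and that under it $\omega_0$ and $\omega_{-1}$ are exactly the connection and tautological forms. Everything else is either an axiom check or the classical structure equations.
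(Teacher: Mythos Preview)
Your proposal is correct. The paper does not give a formal proof of this proposition; it is stated as a summary of the paragraph immediately preceding it, which sketches the two directions of the bijection (from a linear connection to the Cartan connection on $P^1M$, and back via the fundamental derivative $\nabla_X=D_s$ for $s$ a lift along $\mathfrak{m}$) without writing out the details. Your argument supplies precisely those details in the standard way---the decomposition $\omega=\theta+\gamma$ into soldering and connection forms, the axiom checks, and the reading of the torsion and curvature off the first and second structure equations---and is entirely compatible with, indeed more explicit than, what the paper offers. The only terminological difference is that the paper phrases the passage from Cartan geometry to linear connection via the fundamental derivative rather than via the splitting $\omega=\omega_0+\omega_{-1}$, but these amount to the same construction.
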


\noindent\bf I) Symmetric space as a special Cartan geometry of affine type \rm

\begin{definition}
We say that a torsion-free, affine Cartan geometry with complete infinitesimal automorphisms is a symmetric
space if $D_s\kappa=0$ (or equivalently $\nabla_X R=0$) for any section $s$
of $P^1M\times_{Gl(n,\mathbb{R})} \mathfrak{m}$.
\end{definition}

Following \cite[Chapter II]{odk4}, $H$-structure is a reduction of $P^1M$ to
$H\subset Gl(n,\mathbb{R})$, i.e.  an $H$-principal subbundle
$\mathcal{Q}\subset P^1M$.  The obstruction for the existence of the
reduction is a holonomy, for details look in \cite[Chapter II]{odk4}.  In
the situation of the symmetric spaces the holonomy algebra at $u$ is given
by $\kappa(u)$ and $D_s\kappa=0$ gives that the holonomy does not depend on
base point, i.e.  holonomy algebras at all points are isomorphic.  Then the
reduction theorem can by stated as follows.

\begin{prop}
Let $P^1M\to M$ be a symmetric space and assume that $H\subset
Gl(n,\mathbb{R})$ contains the holonomy group.  Then there is a torsion-free Cartan geometry $Q\to M$ with complete infinitesimal automorphisms of type $(H\ltimes_V
\mathbb{R}^{n},H)$ such, that $D_s\kappa=0$ for every section $s$ of
$\mathcal{Q}\times_{H} \mathfrak{m}$ and the original Cartan connection is
equal to the extension via the inclusion of $H$.  This can be called the
$H$-structure on symmetric space.
\end{prop}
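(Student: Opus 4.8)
The plan is to obtain $\mathcal{Q}$ as a holonomy reduction of $P^1M$ and then to check that the whole symmetric structure descends to it. Decompose the affine Cartan connection as $\omega=\gamma+\theta$, where $\gamma$ is the $\mathfrak{gl}(n,\mathbb{R})$-valued principal connection on $P^1M$ (the connection form of $\nabla$) and $\theta$ is the $\mathbb{R}^n$-valued soldering form; torsion-freeness means the $\mathbb{R}^n$-component $T$ of $\kappa$ vanishes and the $\mathfrak{gl}(n,\mathbb{R})$-component is the curvature $R$ of $\nabla$. As recalled above, $D_s\kappa=0$ makes the holonomy algebra independent of the base point and equal to the span of the values $\kappa(u)$. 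Fixing $u_0\in P^1M$, the Reduction Theorem of \cite[Chapter II]{odk4} produces the holonomy subbundle through $u_0$, a principal subbundle with structure group $\mathrm{Hol}(u_0)$ to which $\gamma$ restricts. Since $H\supset\mathrm{Hol}(u_0)$ is closed, its $H$-saturation $\mathcal{Q}$ is an $H$-principal subbundle of $P^1M$ on which $\gamma$ still restricts to an $\mathfrak{h}$-valued principal connection.

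I would next verify that $\omega|_{\mathcal{Q}}$ is a Cartan connection of type $(H\ltimes_V\mathbb{R}^n,H)$. The dimension condition $\dim(H\ltimes_V\mathbb{R}^n/H)=n=\dim M$ holds, and $H$-equivariance together with the reproduction of fundamental vector fields is inherited from $P^1M$ by restriction. For the absolute parallelism, the key point is that $\gamma$ reduces to $\mathcal{Q}$, so $\omega|_{\mathcal{Q}}$ takes values in $\mathfrak{h}\oplus\mathbb{R}^n$: the $\mathfrak{h}$-part reproduces the vertical vectors while $\theta|_{\mathcal{Q}}$ descends to the soldering isomorphism $T_u\mathcal{Q}/V_u\mathcal{Q}\xrightarrow{\sim}\mathbb{R}^n$, so $\omega|_{\mathcal{Q}}$ is a linear isomorphism on each $T_u\mathcal{Q}$. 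The curvature of the resulting geometry is the restriction of $\kappa$: the torsion remains zero, and by the very choice of $\mathcal{Q}$ the curvature $R$ takes values in $\mathfrak{h}$.

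It then remains to transport the three remaining properties. Since the fundamental derivative of the reduced geometry along a lift of a section of $\mathcal{Q}\times_H\mathfrak{m}$ is computed by the restriction of $\nabla$, the identity $D_s\kappa=0$ on $\mathcal{Q}$ is exactly $\nabla R=0$. For the reconstruction, the associated bundle $\mathcal{Q}\times_H Gl(n,\mathbb{R})$ is canonically identified with $P^1M$, and under this identification the extension connection $\alpha\circ(\omega|_{\mathcal{Q}})$ built from the Lie algebra inclusion $\alpha:\mathfrak{h}\ltimes\mathbb{R}^n\hookrightarrow\mathfrak{gl}(n,\mathbb{R})\ltimes\mathbb{R}^n$ coincides with $\omega$; hence the original geometry is the extension of $\mathcal{Q}$ via $H\hookrightarrow Gl(n,\mathbb{R})$. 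Finally, because $\mathcal{Q}$ is built solely from the holonomy, any automorphism of $P^1M$ preserves $\gamma$ and $\theta$, hence horizontal transport and the reduction $\mathcal{Q}$, so the complete infinitesimal automorphisms generating the homogeneous action restrict to complete infinitesimal automorphisms of $Q$, while functoriality of the extension sends infinitesimal automorphisms of $Q$ back to those of $P^1M$.

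The main obstacle is the global reduction step: one must ensure that the holonomy subbundle is an embedded principal subbundle and that its $H$-saturation closes up into a genuine $H$-structure preserved by the symmetries. This is exactly where the hypothesis $D_s\kappa=0$ is indispensable, since it guarantees that the holonomy, and thus the reduction, does not depend on the base point; without it the candidate subbundle would fail to be principal. A subordinate point needing care is the absolute parallelism of $\omega|_{\mathcal{Q}}$, i.e.\ that passing to the reduction does not destroy the transverse isomorphism property — this follows from the compatibility of $\mathcal{Q}$ with the horizontal distribution of $\gamma$.
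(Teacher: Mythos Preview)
Your proof is correct and follows exactly the two-step strategy of the paper's own (very terse) argument: reduce to the holonomy subbundle via \cite[Chapter II, Theorem 7.1]{odk4}, then extend along the inclusion $\mathrm{Hol}\hookrightarrow H$ (your ``$H$-saturation'' is precisely this extension realized inside $P^1M$). You supply considerably more detail than the paper does; the only point to tighten is the claim that arbitrary automorphisms of $P^1M$ preserve $\mathcal{Q}$---for the completeness of infinitesimal automorphisms it is cleaner to argue in the direction you also mention, extending an infinitesimal automorphism of $\mathcal{Q}$ to $P^1M$ by functoriality and using that its complete flow is tangent to the closed submanifold $\mathcal{Q}$.
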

\begin{proof}
First we reduce the geometry to the holonomy subbundle following \cite[Chapter II,
Theorem 7.1]{odk4} and then we extend it to the required geometry via the inclusion of
the holonomy group in $H$.
\end{proof}

In this setting we can define the pseudo-hermitian and para-pseudo-hermitian
symmetric spaces.  We tell some more details on them, because the
construction of symmetric parabolic contact structures will start from them.

\begin{example}
We will always assume that $n=p+q$ in the entire article. If $H\subset Gl(n,\mathbb{C})\cap O(2p,2q)=U(p,q)$, then the symmetric space admitting this $H$-structure is called the pseudo-hermitian symmetric space. If $H\subset (Gl(n,\mathbb{R})\times Gl(n,\mathbb{R}))\cap O(n,n)$, then the symmetric space admitting this $H$-structure is called the para-pseudo-hermitian symmetric space.
\end{example}

\noindent\bf II) Symmetric space as a special reductive Cartan geometry \rm

\begin{definition}
We say that the homogeneous model $G\to G/H$ is a symmetric space if there is an $Ad(H)$-invariant decomposition of $\mathfrak{g}=\mathfrak{h}+\mathfrak{m}$, $\mathfrak{h}=[\mathfrak{m},\mathfrak{m}]$ and the largest normal subgroup of $G$ in $H$ is trivial and there is $h\in H$ acting as $-id$ on $\mathfrak{m}$.
\end{definition}

There are more homogeneous models for the same symmetric space, but we can always pass to effective homogeneous model and in the case that there is not $h\in H$ acting as $-id$ on $\mathfrak{m}$, we can add $\mathbb{Z}_2$ via the involutive automorphism given by the $-id$ on $\mathfrak{m}$.

The correspondence to the previous definition is via the mutation, i.e. extension over identity, which is in this case identity on vector spaces, so does not change the connection, for the details and proof look in \cite[Chapter 5, Proposition 7.2]{odk5}.

\begin{prop}
Let $G\to G/H$ be a symmetric space II), then $H$ is holonomy group and $\omega$ is a Cartan connection of type $(H\ltimes \mathbb{R}^{n},H)$ with curvature $\kappa(u)(X,Y)=[X,Y]$ for $X,Y\in \mathfrak{m}$.

Let $P^1M\to M$ be a symmetric space I). Let $H$ be the holonomy group and $\mathfrak{g}=\mathfrak{h}+\mathbb{R}^{n}$ be a Lie algebra with bracket 
\[ [(h_1+X),(h_2+Y)]=[h_1,h_2]-\kappa(X,Y)+[h_1,Y]-[h_2,X].\]
Then there is a Lie group $G$ acting transitively on $M$ with the Lie algebra $\mathfrak{g}$ such, that $(G\to G/H,\omega)$ is symmetric space II).
\end{prop}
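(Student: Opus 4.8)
The plan is to prove the two assertions separately, reading the first off the reductive structure and the second as a Nomizu--Kostant type reconstruction of a transvection algebra. For the forward direction I would interpret the Maurer--Cartan form on $G$ as the Cartan connection of the reductive model $G\to G/H$ and pass to the canonical linear connection on $M=G/H$ through the fundamental derivative, exactly as in the bijection between linear connections and affine Cartan geometries recalled above. The element $h\in H$ acting as $-\mathrm{id}$ on $\mathfrak{m}$ forces $[\mathfrak{m},\mathfrak{m}]\subseteq\mathfrak{h}$: for $X,Y\in\mathfrak{m}$ the bracket $[X,Y]$ is $Ad(h)$-invariant, while its $\mathfrak{m}$-component is sent to its negative and must vanish. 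Hence the torsion is zero and the curvature function is precisely $\kappa(u)(X,Y)=[X,Y]\in\mathfrak{h}$. Invariance of $\omega$ under $G$ gives $D_s\kappa=0$, so this is a symmetric space in the sense of I); since the holonomy algebra at $u$ is spanned by $\kappa(u)$, the hypothesis $\mathfrak{h}=[\mathfrak{m},\mathfrak{m}]$ identifies it with $\mathfrak{h}$, effectiveness identifies $H$ with the holonomy group, and the preceding reduction proposition exhibits $\omega$ as a connection of type $(H\ltimes_V\mathbb{R}^n,H)$.

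For the converse I would first check that the displayed bracket makes $\mathfrak{g}=\mathfrak{h}+\mathbb{R}^n$ a Lie algebra, sorting the Jacobi identity by the number of arguments lying in $\mathbb{R}^n$. The all-$\mathfrak{h}$ case is automatic; the two-$\mathfrak{h}$ case is the $\mathfrak{h}$-module structure of $\mathbb{R}^n$; the one-$\mathfrak{h}$ case is the $\mathfrak{h}$-equivariance of $\kappa$, which holds because $\kappa$ is a Cartan curvature function and hence $H$-equivariant; and the all-$\mathbb{R}^n$ case is the first Bianchi identity for the torsion-free connection. The condition $D_s\kappa=0$ is what allows me to treat $\kappa$ as a single fixed element of $\bigwedge^2(\mathbb{R}^n)^*\otimes\mathfrak{h}$, so that these relations hold at one point and the structure constants are well defined; note also that $[\mathfrak{m},\mathfrak{m}]=\mathrm{Im}\,\kappa=\mathfrak{h}$, which is the defining relation in II).

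Integrating $\mathfrak{g}$ to the simply connected group $G$ with $H$ the connected subgroup for $\mathfrak{h}$, I obtain $\mathfrak{g}/\mathfrak{h}\cong\mathbb{R}^n\cong\mathfrak{m}$, an $Ad(H)$-invariant splitting, and the symmetry element realizing $-\mathrm{id}$ on $\mathfrak{m}$, so the effective quotient is a symmetric space II). To see that this is the correct $G$ acting on the given $M$, I would match the generators in $\mathfrak{m}$ and $\mathfrak{h}$ with the complete infinitesimal automorphisms of the geometry: the bracket formula $[s_1,s_2]=\kappa(\Pi(s_1),\Pi(s_2))-\{s_1,s_2\}$ reproduces precisely the displayed bracket, and by Propositions \ref{them2} and \ref{lab_1} a homogeneous geometry is determined as the extension of its homogeneous model, so $(G\to G/H,\omega)$ recovers the original one.

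The step I expect to be the main obstacle is this last matching, namely upgrading the abstract Lie algebra $\mathfrak{g}$ to an honest transitive action on $M$ with isotropy algebra $\mathfrak{h}$. Completeness of the infinitesimal automorphisms is needed to integrate the generators to global flows, and the parallelism $D_s\kappa=0$ is needed to guarantee that the transvections preserve $\kappa$, so that their brackets close on $\mathfrak{g}$ without producing base-point--dependent terms; transitivity then follows because $\mathfrak{m}$ maps isomorphically onto each tangent space.
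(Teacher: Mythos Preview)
The paper does not supply its own proof of this proposition: the paragraph immediately preceding it says only that ``the correspondence to the previous definition is via the mutation, i.e.\ extension over identity, which is in this case identity on vector spaces, so does not change the connection,'' and refers to Sharpe \cite[Chapter 5, Proposition 7.2]{odk5} for details. Your proposal is a correct and fully spelled-out version of exactly that idea. Reinterpreting the Maurer--Cartan form on $G$ as a Cartan connection of type $(H\ltimes\mathbb{R}^n,H)$ is precisely mutation along the identity on the underlying vector space $\mathfrak{h}\oplus\mathfrak{m}$, and your curvature computation $\kappa(X,Y)=[X,Y]_{\mathfrak g}$ is the difference-of-brackets formula $\kappa_\alpha=[\alpha,\alpha]-\alpha([\,\cdot\,,\,\cdot\,])$ specialised to $\alpha=\mathrm{id}$. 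For the converse, your Nomizu--Kostant transvection algebra and the verification of Jacobi by homogeneity type is again what Sharpe does; the completeness hypothesis in Definition I) is there precisely to make the integration step go through, as you correctly anticipate in your final paragraph.

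One small redundancy: you argue that the element $h$ acting as $-\mathrm{id}$ on $\mathfrak{m}$ forces $[\mathfrak{m},\mathfrak{m}]\subset\mathfrak{h}$, but in Definition II) the stronger condition $\mathfrak{h}=[\mathfrak{m},\mathfrak{m}]$ is already assumed, so you may simply invoke it. Otherwise your route and the paper's cited route coincide.
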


In this setting we can define the simple symmetric spaces. We
say that a symmetric space $G\to G/H$ is simple if $G$ is
simple, or $H$ is simple and $G=H\times H$. Similarly, the 
semisimple symmetric spaces
correspond to semisimple groups $G$ (except they are simple by the previous
definition). Since simply connected
covering of any symmetric space is a symmetric space, the classification 
is up to
discrete phenomena given by Lie algebras.  We can simplify the
classification by the following proposition, the proof can be found in
\cite{odk3}.

\begin{prop}
Semisimple symmetric space is product of simple symmetric spaces.  Let
$\mathfrak{g}=\mathfrak{h}+\mathfrak{m}$ be simple Lie algebra such, that
$[\mathfrak{h},\mathfrak{m}]\subset \mathfrak{m}$ and
$\mathfrak{h}=[\mathfrak{m},\mathfrak{m}]$.  Then there is the unique connected
simple connected simple homogeneous symmetric space $G\to G/H$.
\end{prop}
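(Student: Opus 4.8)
The plan is to recast the whole statement in the language of the \emph{symmetric Lie algebra} $(\mathfrak{g},\sigma)$, where $\sigma$ is the linear involution acting as $+\mathrm{id}$ on $\mathfrak{h}$ and $-\mathrm{id}$ on $\mathfrak{m}$. First I would verify that $\sigma$ is genuinely a Lie algebra automorphism with $\sigma^2=\mathrm{id}$: the three relations $[\mathfrak{h},\mathfrak{h}]\subseteq\mathfrak{h}$, $[\mathfrak{h},\mathfrak{m}]\subseteq\mathfrak{m}$, $[\mathfrak{m},\mathfrak{m}]\subseteq\mathfrak{h}$ are exactly the hypotheses, where closedness of $\mathfrak{h}$ comes from $\mathfrak{h}=[\mathfrak{m},\mathfrak{m}]$ together with the Jacobi identity and $[\mathfrak{h},\mathfrak{m}]\subseteq\mathfrak{m}$. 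Thus the $\pm1$-eigenspace splitting is bracket-compatible, and the datum of the symmetric space in sense II) is equivalent to the pair $(\mathfrak{g},\sigma)$.

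For the product decomposition I would use that a semisimple $\mathfrak{g}$ is a direct sum of simple ideals $\mathfrak{g}=\bigoplus_i\mathfrak{a}_i$, and that the automorphism $\sigma$ permutes these ideals. Since $\sigma^2=\mathrm{id}$ the induced permutation has order at most two, so each orbit is either a single $\sigma$-stable ideal or a transposed pair. A $\sigma$-stable ideal inherits the involution and yields a simple symmetric Lie algebra with $G$ simple; here $\mathfrak{h}\cap\mathfrak{a}_i=[\mathfrak{m}\cap\mathfrak{a}_i,\mathfrak{m}\cap\mathfrak{a}_i]$ follows by intersecting the ideal decomposition with $\mathfrak{h}=[\mathfrak{m},\mathfrak{m}]$. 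For a transposed pair $\mathfrak{a}_i\oplus\mathfrak{a}_{\sigma(i)}$ the fixed subalgebra is the diagonal $\{X+\sigma(X):X\in\mathfrak{a}_i\}\cong\mathfrak{a}_i$ and $\mathfrak{m}$ is the antidiagonal; a short computation using that distinct ideals commute shows this is precisely the symmetric Lie algebra of $G=H\times H$ with $H$ simple. These are exactly the two cases allowed by the definition of a simple symmetric space, and the orthogonal direct sum of symmetric Lie algebras integrates to a product of the corresponding symmetric spaces.

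For the existence assertion I would integrate $\sigma$. Let $\tilde G$ be the simply connected group with Lie algebra $\mathfrak{g}$; because $\tilde G$ is simply connected, $\sigma$ lifts uniquely to an automorphism $\Sigma$ of $\tilde G$ with $\Sigma^2=\mathrm{id}$. Taking $H$ to be the identity component of the fixed point subgroup $\tilde G^{\Sigma}$ gives $\mathfrak{h}$ as its Lie algebra, and for $h\in H$ the relation $\sigma\circ\mathrm{Ad}(h)=\mathrm{Ad}(h)\circ\sigma$ shows $\mathrm{Ad}(H)$ preserves the eigenspaces, so the decomposition is $\mathrm{Ad}(H)$-invariant. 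The condition $\mathfrak{h}=[\mathfrak{m},\mathfrak{m}]$ is assumed, and effectivity is automatic since the only ideals of the simple $\mathfrak{g}$ are $0$ and $\mathfrak{g}$, the latter not lying in the proper subalgebra $\mathfrak{h}$ (note $\mathfrak{m}\neq0$, else $\mathfrak{g}=0$). To produce an element acting as $-\mathrm{id}$ on $\mathfrak{m}$ I would, as in the remark after Definition II, adjoin $\mathbb{Z}_2=\langle\Sigma\rangle$ so that $\Sigma$ itself becomes the required involutive symmetry at the base point; one checks $\langle\Sigma\rangle$ is not normal in $\tilde G\rtimes\mathbb{Z}_2$, so effectivity is preserved.

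Uniqueness is where I expect the real work to sit. Given two connected, simply connected, simple symmetric spaces with the same underlying $(\mathfrak{g},\sigma)$, the simply connected groups with Lie algebra $\mathfrak{g}$ are canonically isomorphic, the lifted involutions agree under this isomorphism by uniqueness of lifts to a simply connected group, and the isotropy is recovered as the identity component of the corresponding fixed point group. The main obstacle is to promote this Lie-theoretic rigidity to an isomorphism of symmetric spaces respecting the symmetries: one must show that a morphism of symmetric Lie algebras integrates to a morphism of the homogeneous models, which is exactly the correspondence between effective symmetric Lie algebras and simply connected symmetric spaces established in \cite{odk3}, and that the discrete ambiguity has been eliminated by passing to the simply connected cover.
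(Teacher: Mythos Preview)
Your outline is correct and is essentially the standard argument one finds in Loos \cite{odk3}; the paper itself gives no proof at all but simply defers to that reference. So there is nothing to compare: you have supplied the content the paper omits, and your route via the involution $\sigma$, the $\sigma$-permutation of simple ideals, and integration to the simply connected group is exactly the classical one.

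One small remark on the existence step: when you adjoin $\langle\Sigma\rangle$ to obtain the element acting as $-\mathrm{id}$ on $\mathfrak{m}$, the effectivity check is not that $\langle\Sigma\rangle$ fails to be normal, but that no nontrivial normal subgroup of $\tilde G\rtimes\langle\Sigma\rangle$ is contained in $H\cdot\langle\Sigma\rangle$. This follows because such a normal subgroup would meet $\tilde G$ in a normal subgroup contained in $H$, hence in the (discrete) center of $\tilde G$; but the center is fixed by $\Sigma$ and already lies in $H$, and one checks directly that a central element together with $\Sigma$ cannot generate a normal subgroup unless it is trivial. This is routine, but phrasing it as ``$\langle\Sigma\rangle$ is not normal'' is not quite the right condition.
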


The statement of the proposition can be extended to $H$-structures on 
symmetric spaces, if any product of $H$-structures again carries an 
$H$-structure.

\begin{example}
Classification of the semisimple pseudo-hermitian and para-pseu\-do-hermitian symmetric spaces is easy, because it can be shown that the complexification of $G$ is for these symmetric spaces $1$-graded, for detailed proof look in \cite[Chapter 3]{odk11}. Looking into the classification of $1$-gradings and real forms in \cite[Chapter 3.2 and Appendix B]{odk6} we get following tables. First table contains simple para-pseudo-hermitian symmetric spaces, where the adjoint representation of the semisimple part of $H$ on $\mathfrak{m}$ is $W+W^*$.

\medskip
\begin{tabular}{|c|c|c|}
\hline
$\mathfrak{g}$ & $\mathfrak{h}$&representation $W$\\
\hline
$\mathfrak{su}(n,n)$&  $\mathfrak{sl}(n,\mathbb{C})+\mathbb{R}$& $\lambda_1\otimes \lambda_{n-1}$\\
\hline
$\mathfrak{sl}(p+q,\mathbb{R})$& $\mathfrak{sl}(p,\mathbb{R})+\mathfrak{sl}(q,\mathbb{R})+\mathbb{R}$&$\lambda_1(p) \otimes \lambda_{n-1}(q)$\\
\hline
$\mathfrak{sl}(p+q,\mathbb{H})$& $\mathfrak{sl}(p,\mathbb{H})+\mathfrak{sl}(q,\mathbb{H})+\mathbb{R}$&$\lambda_1(p) \otimes \lambda_{n-1}(q)$\\
\hline
$\mathfrak{so}(p+1,q+1)$&  $\mathfrak{so}(p,q)+\mathfrak{so}(1,1)$&$\lambda_1$\\
$\mathfrak{so}(n,n)$&  $\mathfrak{sl}(n,\mathbb{R})+\mathbb{R}$&$\lambda_2$\\
\hline
$\mathfrak{sp}(n,n)$&  $\mathfrak{sl}(n,\mathbb{H})+\mathbb{R}$&$2\lambda_1$\\
\hline
$\mathfrak{sp}(2n,\mathbb{R})$& $\mathfrak{sl}(n,\mathbb{R})+\mathbb{R}$&$2\lambda_1$\\
\hline
$\mathfrak{so}^\star(4n)$&  $\mathfrak{sl}(n,\mathbb{H})+\mathbb{R}$&$\lambda_2$\\
\hline
$\mathfrak{e}_6(-26)$& $\mathfrak{so}(9,1)+\mathbb{R}$&$\lambda_4$ \\
$\mathfrak{e}_6(6)$& $\mathfrak{so}(5,5)+\mathbb{R}$&$\lambda_4$\\
\hline
$\mathfrak{e}_7(-25)$& $\mathfrak{e}_6(-26)+\mathbb{R}$&$\lambda_1$\\
$\mathfrak{e}_7(7)$& $\mathfrak{e}_6(6)+\mathbb{R}$&$\lambda_1$\\
\hline
\end{tabular}

\medskip
Second table contains simple pseudo-hermitian symmetric spaces.  The adjoint
representation of the semisimple part of $H$ on $\mathfrak{m}$ is now more
complicated, because now $\mathfrak{g}$ is not $1$-graded.  The
complexification of the adjoint representation of the semisimple part of $H$ on
$\mathfrak{m}$ is $W+W^*$, thus the adjoint representation is $W$ if the
type of the representation is $\mathbb{C}$ or $\mathbb{H}$ or it is
complexification of real representation $W$ if the type is $\mathbb{R}$.

\medskip
\begin{tabular}{|c|c|c|c|}
\hline
$\mathfrak{g}$ & $\mathfrak{h}$ & representation $W$& type of $W$\\
\hline
$\mathfrak{su}(p,q)$& $\mathfrak{su}(p_1,q_1)+\mathfrak{su}(p_2,q_2)+\mathfrak{u}(1)$&$\lambda_1(1) \otimes \lambda_{n-1}(2)$&$\mathbb{C}$\\
\hline
$\mathfrak{sl}(2n,\mathbb{R})$& $\mathfrak{sl}(n,\mathbb{C})+\mathfrak{u}(1)$&$\lambda_1\otimes \lambda_{n-1}$&$\mathbb{C}$\\
\hline
$\mathfrak{sl}(n,\mathbb{H})$& $\mathfrak{sl}(n,\mathbb{C})+\mathfrak{u}(1)$&$\lambda_1\otimes \lambda_{n-1}$&$\mathbb{C}$\\
\hline
$\mathfrak{so}(p+2,q)$&  $\mathfrak{so}(p,q)+\mathfrak{so}(2)$&$\lambda_1$&$\mathbb{R}$\\
$\mathfrak{so}(2p,2q)$&  $\mathfrak{su}(p,q)+\mathfrak{u}(1)$&$\lambda_2$&$\mathbb{C}$\\
\hline
$\mathfrak{sp}(p,q)$&  $\mathfrak{su}(p,q)+\mathfrak{u}(1)$&$2\lambda_1$&$\mathbb{C}$\\
\hline
$\mathfrak{sp}(2n,\mathbb{R})$& $\mathfrak{su}(p,q)+\mathfrak{u}(1)$&$2\lambda_1$&$\mathbb{C}$\\
\hline
$\mathfrak{so}^\star(2n)$&  $\mathfrak{su}(p,q)+\mathfrak{u}(1)$&$\lambda_2$&$\mathbb{C}$\\
$\mathfrak{so}^\star(2n+2)$&  $\mathfrak{so}^\star(2n)+\mathfrak{so}^\star(2)$&$\lambda_1$&$\mathbb{H}$\\
\hline
$\mathfrak{e}_6(-78)$ & $\mathfrak{so}(10)+\mathfrak{u}(1)$&$\lambda_4$&$\mathbb{C}$\\
$\mathfrak{e}_6(-14)$ & $\mathfrak{so}(10)+\mathfrak{u}(1)$&$\lambda_4$&$\mathbb{C}$\\
$\mathfrak{e}_6(-14)$& $\mathfrak{so}(8,2)+\mathfrak{u}(1)$&$\lambda_4$&$\mathbb{C}$\\
$\mathfrak{e}_6(-14)$& $\mathfrak{so}^\star(10)+\mathfrak{u}(1)$&$\lambda_4$&$\mathbb{C}$\\
$\mathfrak{e}_6(2)$& $\mathfrak{so}(6,4)+\mathfrak{u}(1)$&$\lambda_4$&$\mathbb{C}$\\
$\mathfrak{e}_6(2)$& $\mathfrak{so}^\star(10)+\mathfrak{u}(1)$&$\lambda_4$&$\mathbb{C}$\\
\hline
$\mathfrak{e}_7(-133)$ & $\mathfrak{e}_6(-78)+\mathfrak{u}(1)$&$\lambda_1$&$\mathbb{C}$\\
$\mathfrak{e}_7(-25)$ & $\mathfrak{e}_6(-78)+\mathfrak{u}(1)$&$\lambda_1$&$\mathbb{C}$\\
$\mathfrak{e}_7(-25)$& $\mathfrak{e}_6(-14)+\mathfrak{u}(1)$&$\lambda_1$&$\mathbb{C}$\\
$\mathfrak{e}_7(-5)$& $\mathfrak{e}_6(2)+\mathfrak{u}(1)$&$\lambda_1$&$\mathbb{C}$\\
$\mathfrak{e}_7(-5)$& $\mathfrak{e}_6(-14)+\mathfrak{u}(1)$&$\lambda_1$&$\mathbb{C}$\\
$\mathfrak{e}_7(7)$& $\mathfrak{e}_6(2)+\mathfrak{u}(1)$&$\lambda_1$&$\mathbb{C}$\\
\hline
\end{tabular}

\medskip
And finally we list the complex simple symmetric spaces, which have both structures, 
where the adjoint
representation of $H$ is $W+W^*$, and complex representation $W$ is shown 
in the table.

\medskip
\begin{tabular}{|c|c|c|}
\hline
$\mathfrak{g}$ & $\mathfrak{h}$ & representation $W$\\
\hline
$\mathfrak{sl}(p+q,\mathbb{C})$& $\mathfrak{sl}(p,\mathbb{C})+\mathfrak{sl}(q,\mathbb{C})+\mathbb{C}$& $\lambda_1(p) \otimes \lambda_{n-1}(q)$\\
$\mathfrak{so}(p+2,\mathbb{C})$& $\mathfrak{so}(p,\mathbb{C})+\mathfrak{so}(2,\mathbb{C})$& $\lambda_1$\\
$\mathfrak{sp}(n,\mathbb{C})$& $\mathfrak{sl}(2n,\mathbb{C})+\mathbb{C}$&$2\lambda_1$\\
$\mathfrak{so}(2n,\mathbb{C})$& $\mathfrak{sl}(n,\mathbb{C})+\mathbb{C}$& $\lambda_2$\\
$\mathfrak{e}_6$ & $\mathfrak{so}(10,\mathbb{C})+\mathbb{C}$&$\lambda_4$\\
$\mathfrak{e}_7$ & $\mathfrak{e}_6+\mathbb{C}$&$\lambda_1$\\
\hline
\end{tabular}
\end{example}

\medskip
\noindent\bf III) Symmetric spaces as manifolds with a smooth system of symmetries\rm

\begin{definition}
Let $M$ be a connected smooth manifold and $S:M\times M \to M$ a smooth
mapping.  We denote $S(x,y)=S_xy$ and say that $S_x$ is a symmetry at $x$. 
We call $(M,S)$ a symmetric space under the following four conditions:

(A1) $S_xx=x$

(A2) $S_x(S_xy)=y$

(A3) $S_xS(y,z)=S(S_xy,S_xz)$

(A4) $T_xS_x=-\operatorname{id}_{T_xM}$.
\end{definition}

There is the relation between the definitions III) and II) as described in
\cite{odk3}:

\begin{prop}
Let $G\to G/H$ be a homogeneous symmetric space.  Then the linear map
defined as $-\operatorname{id}$ on $\mathfrak{m}$ has unique extension to an involution
$\sigma$ on $G$.  Then we define the symmetries 
\[ S_{fH}gH=f\sigma(f^{-1}g)H.  \]

Let $(M,S)$ be a symmetric space III) and $G\subset \operatorname{Diff}(M)$
the group generated by the symmetries $S_x$. Then $G$ is a Lie group with transitive action on $M$, i.e. $M=G/H$. Thus $G\to G/H$ is a homogeneous symmetric space.
\end{prop}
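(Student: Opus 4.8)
The plan is to prove the two implications separately; the first (II $\Rightarrow$ III) is essentially formal once the involution is produced, and the second (III $\Rightarrow$ II) carries the real content.

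\textbf{From II) to III).} First I would observe that the linear map $\theta:=\operatorname{id}_{\mathfrak h}\oplus(-\operatorname{id}_{\mathfrak m})$ is an involutive automorphism of $\mathfrak g$. This is immediate from the three bracket relations $[\mathfrak h,\mathfrak h]\subset\mathfrak h$, $[\mathfrak h,\mathfrak m]\subset\mathfrak m$ (the $\operatorname{Ad}(H)$-invariance of the splitting) and $[\mathfrak m,\mathfrak m]\subset\mathfrak h$ (part of $\mathfrak h=[\mathfrak m,\mathfrak m]$): on each of the three bracket types the signs agree, so $\theta[X,Y]=[\theta X,\theta Y]$. Since $G$ is connected, a Lie group automorphism is determined by its differential, which already gives uniqueness of any $\sigma$ with $\sigma'=\theta$; existence follows by integrating $\theta$ on the simply connected cover and pushing it down, using that $\theta$ fixes $\mathfrak h$, so that $\sigma$ preserves $H$ and descends to $M=G/H$ (the element $h\in H$ acting as $-\operatorname{id}$ on $\mathfrak m$ guarantees that this involution is actually realized inside the geometry).

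With $\sigma$ in hand and $S_{fH}gH=f\sigma(f^{-1}g)H$ (well defined because $\sigma(H)\subset H$), the four axioms are a direct computation. Axiom (A1) is $S_{fH}fH=f\sigma(e)H=fH$; axiom (A2) uses $\sigma^2=\operatorname{id}$, giving $S_{fH}S_{fH}gH=f\sigma(\sigma(f^{-1}g))H=gH$. For (A3) I would use only that $\sigma$ is a homomorphism: writing $x=fH$, $y=gH$, $z=kH$, both $S_x(S_yz)$ and $S_{S_xy}(S_xz)$ reduce to $f\sigma(f^{-1}g)g^{-1}kH$. For (A4) note that with the left translation $L_f$ one has $S_{fH}=L_f\circ S_{eH}\circ L_{f^{-1}}$, so it suffices to check at the base point, where $T_{eH}S_{eH}$ is induced by $\sigma'=\theta$ on $\mathfrak g/\mathfrak h\cong\mathfrak m$, i.e. equals $-\operatorname{id}$. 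Smoothness of $S:M\times M\to M$ is clear from smoothness of $\sigma$ and of the group operations.

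\textbf{From III) to II).} This is the substantial direction. Fix $o\in M$, set $\mathfrak m:=T_oM$, and equip $M$ with the canonical affine connection $\nabla$ for which every $S_x$ is the geodesic symmetry at $x$; axioms (A1)--(A4) force $\nabla$ to be torsion-free, complete, and to satisfy $\nabla R=0$ (the classical fact that a manifold with a smooth symmetry system is an affine symmetric space). Since an affine transformation of $\nabla$ is determined by its $1$-jet at one point, $\operatorname{Aut}(M,\nabla)$ is a Lie transformation group; the group $G$ generated by the $S_x$ sits inside it, hence is itself a Lie group acting smoothly and effectively (effectivity is automatic as $G\subset\operatorname{Diff}(M)$). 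Transitivity follows because the transvections $S_xS_y$ with $y$ near $x$ displace $o$ in all directions of $\mathfrak m$ and $M$ is connected, so $M=G/H$ with $H$ the stabilizer of $o$. Finally I would take $\sigma:=\operatorname{conj}_{S_o}$ on $G$; its differential is $-\operatorname{id}$ on $\mathfrak m$ (since $T_oS_o=-\operatorname{id}$) and $+\operatorname{id}$ on $\mathfrak h$, producing the $\operatorname{Ad}(H)$-invariant splitting with $[\mathfrak m,\mathfrak m]\subset\mathfrak h$, while the identity component of $G$ is the transvection group whose isotropy algebra is exactly $[\mathfrak m,\mathfrak m]$, so $\mathfrak h=[\mathfrak m,\mathfrak m]$ and $S_o$ is the required involutive element of $H$.

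The main obstacle is this converse, and within it the claim that $G$ is a finite-dimensional Lie group acting transitively. It rests on the rigidity of the canonical connection — that automorphisms are pinned down by a single $1$-jet, which is what makes $\operatorname{Aut}(M,\nabla)$ a Lie group — together with the connectedness/transitivity argument for the transvections. Along the way one must carefully match the effectivity hypothesis of II) (triviality of the largest normal subgroup of $G$ contained in $H$) with the fact that the constructed $G$ already acts effectively by definition.
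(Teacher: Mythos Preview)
Your argument is correct and follows exactly the classical route of Loos, which is all the paper does here: it gives no proof of its own but simply refers to \cite{odk3}. The one step you should tighten is the sentence ``$G$ \ldots\ sits inside it, hence is itself a Lie group'': an abstract subgroup of a Lie group need not be a Lie group, so you must either show $G$ is closed in $\operatorname{Aut}(M,\nabla)$ or, as Loos does, exhibit the transvection group directly as the connected Lie subgroup integrating the Lie algebra spanned by the complete infinitesimal transvections $\tfrac{d}{dt}\big|_{t=0}S_{\gamma(t/2)}S_o$, and then observe that $G$ is this group together with $S_o$.
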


\noindent\bf IV) Symmetric spaces via geodesic symmetries \rm

Let $\nabla$ be a linear connection, then the geodesic symmetry at $x$ is
the mapping 
\[\operatorname{Exp}_x(X)\mapsto \operatorname{Exp}_x(-X), \]
where $\operatorname{Exp}_x(tX)$ is the geodesic 
of $\nabla$ starting at $x$ in the direction of $X$.

\begin{definition}
Let $M$ be a connected smooth manifold and $\nabla$ a complete affine connection. We say that $(M,\nabla)$ is the affine symmetric space if the geodesic symmetries $S_x$ at all $x$ are affine transformations, i.e. $(S_x)^*\nabla=\nabla$.
\end{definition}

The correspondences between the definitions I), III), and IV) can be found again in
\cite{odk3}:

\begin{prop}
Let $(M,\nabla)$ be an affine symmetric space, then $\nabla R=0$ for $R$ the curvature of $\nabla$. So the corresponding affine Cartan geometry is symmetric.

Let $(M,S)$ be a symmetric space. Then there is the linear connection 
\[ \nabla_XY(x)=\frac12[X,Y+(S_x)^*Y](x)\]
such that $S_x$ is the geodesic symmetry at $x$.
\end{prop}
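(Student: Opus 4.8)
The plan is to prove the two implications separately, both of them hinging on the single algebraic fact that $T_xS_x=-\operatorname{id}$ imposes opposite behaviour on tensors of even and odd valence. For the first assertion, suppose $(M,\nabla)$ is an affine symmetric space, so that each geodesic symmetry $S_x$ is an affine transformation fixing $x$ with $T_xS_x=-\operatorname{id}$. An affine transformation preserves the connection, hence every tensor canonically built from it, in particular the torsion $T$, the curvature $R$, and its covariant derivative $\nabla R$; thus $S_x^*T=T$, $S_x^*R=R$ and $S_x^*(\nabla R)=\nabla R$. I would then evaluate these identities at the fixed point $x$. For a $(1,k)$-tensor $\Theta$ one computes $(S_x^*\Theta)_x=(-1)^{k+1}\Theta_x$, since each of the $k$ vector inputs and the single output picks up a factor $-1$ through $T_xS_x=-\operatorname{id}$. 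Applying this to $T$ (where $k=2$) gives $T_x=-T_x$, so $T\equiv0$ and the connection is torsion-free; applying it to $\nabla R$ (where $k=4$) gives $(\nabla R)_x=-(\nabla R)_x$, whence $\nabla R=0$ (consistently, $R$ itself has $k=3$ and is left unconstrained). Completeness of the infinitesimal automorphisms is inherited from the completeness of $\nabla$ together with the transitive automorphism group provided by the symmetric structure, so via the bijection between linear connections and affine Cartan geometries — under which $D_s\kappa=0$ corresponds exactly to $\nabla R=0$ on a torsion-free geometry — the associated affine Cartan geometry meets Definition I) and is symmetric.

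For the second assertion, given a symmetric space $(M,S)$ in the sense III), I would first record the pointwise identity that underlies everything: evaluating the pullback at the fixed point of $S_x$ and using $(A1)$, $(A2)$, $(A4)$ yields $((S_x)^*Y)(x)=T_xS_x(Y(x))=-Y(x)$, so the vector field $Y+(S_x)^*Y$ \emph{vanishes at} $x$. Tensoriality in $X$ then follows because the bracket of $X$ with a field $Z$ vanishing at $x$, evaluated at $x$, depends only on $X(x)$: from $[fX,Z]=f[X,Z]-(Zf)X$ the second term dies at $x$ since $Z(x)=0$. For the Leibniz rule in $Y$, I would expand $\nabla_X(fY)(x)$ using $(S_x)^*(fY)=(f\circ S_x)\,(S_x)^*Y$; the chain rule with $T_xS_x=-\operatorname{id}$ gives $X(f\circ S_x)(x)=-(Xf)(x)$, and combining this with $((S_x)^*Y)(x)=-Y(x)$ contributes precisely the term $(Xf)\,Y$, confirming $\nabla_X(fY)=f\,\nabla_XY+(Xf)\,Y$.

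It then remains to identify $S_x$ with the geodesic symmetry of this $\nabla$, and for this I would show that each $S_x$ is an \emph{affine transformation}. Axiom $(A3)$, in the form $S_x\circ S_y\circ S_x^{-1}=S_{S_xy}$, makes every $S_x$ an automorphism of the whole system of symmetries; combined with the naturality of the Lie bracket, $(S_x)_*[X,Z]=[(S_x)_*X,(S_x)_*Z]$, this gives the compatibility $(S_x)_*\bigl((S_{S_xy})^*Y\bigr)=(S_y)^*\bigl((S_x)_*Y\bigr)$, which fed into the defining formula yields $(S_x)_*(\nabla_XY)=\nabla_{(S_x)_*X}\bigl((S_x)_*Y\bigr)$, i.e.\ $S_x$ is affine. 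With $S_x$ affine, $S_xx=x$ and $T_xS_x=-\operatorname{id}$, the curve $t\mapsto S_x(\operatorname{Exp}_x(tX))$ is the geodesic through $x$ with initial velocity $-X$ and hence equals $\operatorname{Exp}_x(-tX)$, so $S_x$ is exactly the geodesic symmetry at $x$. I expect the well-definedness of the connection to be the main obstacle, as it is the step where all four axioms and the fixed-point identity $((S_x)^*Y)(x)=-Y(x)$ must be assembled correctly; the affinity of $S_x$, though the conceptual heart, then reduces to bookkeeping with the naturality of the bracket.
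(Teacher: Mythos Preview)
Your argument is correct. The paper itself does not supply a proof of this proposition; it simply defers to Loos's monograph \cite{odk3}. What you have written is a self-contained verification of both directions, and the parity argument for the first part together with the direct check of tensoriality, the Leibniz rule, and affinity of $S_x$ for the second part is exactly the standard route followed in the cited reference. One small remark: in the second part you implicitly use $(A2)$ to identify $(S_x)^*$ with $(S_x)_*$ on vector fields when unwinding the compatibility $(S_x)_*\bigl((S_{S_xy})^*Y\bigr)=(S_y)^*\bigl((S_x)_*Y\bigr)$; it would be worth making that step explicit, but the computation goes through as you indicate.
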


Next, we discuss the morphisms and show that the categories of symmetric
spaces coming 
from the definitions I)-IV) are equivalent.

\begin{definition} Let us consider the following categories I)--IV):
\begin{itemize}
\item[I)] the morphisms are the morphisms of affine Cartan geometries
\item[II)] the morphisms are the Lie group homomorphisms compatible with the decompositions
\item[III)] the morphisms are maps $f:M\to M'$ such, that $f(S_xy)=S'_{f(x)}f(y)$
\item[IV)] the morphisms are the affine transformations
\end{itemize}
\end{definition}

We shall indicate, why the categories I)--IV) are equivalent, 
for details look in \cite[Chapter 1]{odk11}.

I) $\leftrightarrow$ IV) the morphisms of the affine Cartan geometries are exactly the affine transformations and $P^1$ is a functor.

IV) $\leftrightarrow$ III) for the affine maps $f$ and all geodesic symmetries
we know
$f(S_xy)=S'_{f(x)}f(y)$. The other direction is computed directly 
from the definition.

III) $\leftrightarrow$ II) the axiom (A3) implies 
that the morphisms define Lie group homomorphisms 
and (A4) gives the compatibility with decomposition. 
The other direction is again a direct computation from the definition.

\subsection{Parabolic geometries with smooth system of symmetries}

The symmetric parabolic geometries defined in Definition \ref{symLenka}
provide a generalization of the definition IV) above. 
In this article we will add one more assumption. 
We look at a generalization of definition III):

\begin{definition}
Let us consider a symmetric parabolic geometry $(\mathcal{P}\to M,\omega)$
of type $(L,P)$. We say that $S$ is a smooth system of symmetries, 
if the map $S:M\times M\to M$ is
smooth and satisfies axiom (A3).  We say that system is involutive if (A2)
holds as well.
\end{definition}

These smooth systems of symmetries were investigated in \cite{odk7} for the
$1$-graded parabolic geometries and it was shown that $(M,S)$ is a symmetric
space from definition III).  In the rest of the article we will deal with
the smooth systems of symmetries on parabolic contact geometries and we
construct examples of such geometries.  First examples are the homogeneous
models.

\begin{prop}
For every parabolic subalgebra $\mathfrak p$ of a semisimple $\mathfrak{l}$, there is a 
homogeneous model with smooth system of symmetries.
\end{prop}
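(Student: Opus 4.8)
The plan is to avoid the flat model $L/P$ itself and instead realise the symmetric geometry over the affine space $\mathfrak{m}:=\mathfrak{l}_{<0}=\mathfrak{l}_{-k}\oplus\dots\oplus\mathfrak{l}_{-1}$ by means of the extension functor. My first remark would explain why this detour is forced: there is in general \emph{no} automorphism of $\mathfrak{l}$ that is trivial on $\mathfrak{p}$ and equals $-\operatorname{id}$ on the part corresponding to $T^{-1}$. Such a $\Sigma$ would satisfy $\Sigma|_{\mathfrak{l}_0}=\Sigma|_{\mathfrak{l}_1}=\operatorname{id}$ and $\Sigma|_{\mathfrak{l}_{-1}}=-\operatorname{id}$, but $[\mathfrak{l}_1,\mathfrak{l}_{-1}]=\mathfrak{l}_0$ by Lemma~\ref{lemreg1} would then force $\Sigma|_{\mathfrak{l}_0}=-\operatorname{id}$, a contradiction whenever the grading is non-trivial. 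So the correct infinitesimal model of a symmetry is not $-\operatorname{id}$ but the \emph{parity} $\theta\in\operatorname{Aut}(\mathfrak{l})$ with $\theta|_{\mathfrak{l}_j}=(-1)^j\operatorname{id}$: an involution fixing $\mathfrak{l}_0$ pointwise and preserving the grading, hence preserving $\mathfrak{p}$. I would choose the group $L$ in the statement so that $\theta=Ad(s)$ for some $s\in L_0$ (if $\theta$ is not already inner one adjoins it, replacing $L$ by $\operatorname{Inn}(\mathfrak{l})\rtimes\langle\theta\rangle$); then $s\in L_0\subset P$, and the freedom to make this choice is exactly what the phrase ``a homogeneous model'' allows.

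With $s$ fixed, the seed is the flat homogeneous model of type $(G,K)$ with $G=L_0\ltimes\mathfrak{m}$ (semidirect product, $\mathfrak{m}$ abelian and $L_0$ acting by the restricted adjoint) and $K=L_0$, so that the base is $G/K\cong\mathfrak{m}$. I would feed this into the extension functor \cite[1.5.15]{odk6} using $i\colon L_0\hookrightarrow P$ the inclusion and $\alpha\colon\mathfrak{g}=\mathfrak{l}_0\oplus\mathfrak{m}\to\mathfrak{l}$ the canonical linear inclusion $\mathfrak{l}_{\le 0}\hookrightarrow\mathfrak{l}$ (which is \emph{not} a Lie homomorphism). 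Conditions (i)--(iii) are immediate: $\mathfrak{l}_0$ and $\mathfrak{m}$ are $Ad(L_0)$-submodules of $\mathfrak{l}$ carrying exactly the module structures they have in $\mathfrak{g}$, so $\alpha$ is $Ad(L_0)$-equivariant and restricts to $i'$ on $\mathfrak{l}_0$, while it induces the identity $\mathfrak{g}/\mathfrak{k}=\mathfrak{m}\xrightarrow{\sim}\mathfrak{l}/\mathfrak{p}$; note $\dim G/K=\dim\mathfrak{m}=\dim L/P$. The output is a parabolic geometry of type $(L,P)$ over $M\cong\mathfrak{m}$, homogeneous because the left translations of $G$ are automorphisms of the seed and are carried to automorphisms of the extension by functoriality.

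It remains to produce the symmetries. On $M\cong\mathfrak{m}$ I would set
\[ S(x,y)=S_x(y)=\theta(y)+(\operatorname{id}-\theta)(x). \]
This is smooth, and $S_x$ is precisely left translation by $g_x=(s,(\operatorname{id}-\theta)(x))\in G$, since $g_x\cdot y=Ad(s)y+(\operatorname{id}-\theta)(x)=\theta(y)+(\operatorname{id}-\theta)(x)$. Being a left translation it preserves the Maurer--Cartan form, hence is an automorphism of the seed, and by functoriality it is covered by an automorphism of the parabolic geometry covering the same base map $S_x$. Using $\theta^2=\operatorname{id}$ one computes $S_x(S_x y)=y$ and $S_xS(y,z)=z+(\operatorname{id}-\theta)(x-y)=S(S_xy,S_xz)$, so (A2) and (A3) hold; moreover $S_x(x)=x$ and $T_xS_x=\theta$ acts as $-\operatorname{id}$ on the summand $\mathfrak{l}_{-1}$, i.e.\ on $T^{-1}_xM$. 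Thus every $S_x$ is a symmetry in the sense of Definition~\ref{symLenka} and $S$ is a smooth involutive system of symmetries.

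The one genuine difficulty is the step flagged in the first paragraph: arranging that the parity $\theta$ is inner for the chosen group, i.e.\ that $s\in L_0$ exists. The grading one-parameter subgroup $\exp(tE)$ in the centre of $L_0$ acts by the positive scalars $e^{tj}$ and never realises the signs $(-1)^j$, so $s$ need not lie in the identity component and $L$ must in general be enlarged; once this choice is made, the verification of (i)--(iii) and of the axioms above is purely formal.
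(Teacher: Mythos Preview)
There is a genuine gap: your construction is not a homogeneous model. In this paper ``homogeneous model'' means precisely the flat Cartan geometry $L\to L/P$ equipped with the Maurer--Cartan form (see the paragraph following the definition of a Cartan geometry, and the phrase ``automorphisms of flat Cartan geometry'' at the end of the paper's own proof). Your extended geometry over $G/K\cong\mathfrak{m}$, with $G=L_0\ltimes\mathfrak{m}$ abelian and $\alpha$ the linear inclusion $\mathfrak{l}_{\le 0}\hookrightarrow\mathfrak{l}$, has curvature
\[
\kappa_\alpha(X,Y)=[\alpha(X),\alpha(Y)]_{\mathfrak{l}}-\alpha\bigl([X,Y]_{\mathfrak{g}}\bigr)=[X,Y]_{\mathfrak{l}}\qquad(X,Y\in\mathfrak{m}),
\]
which is nonzero whenever the grading has depth at least $2$ (Lemma~\ref{lemreg1}); in fact $\kappa_\alpha(\mathfrak{l}_i,\mathfrak{l}_j)\subset\mathfrak{l}_{i+j}$, so the geometry is not even regular. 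You have built a homogeneous parabolic geometry with an involutive smooth system of symmetries, but not the one the proposition asks for.

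Your motivation for the detour is also off. An automorphism of the flat model covering a symmetry at $eP$ is, by Proposition~\ref{authomod}, left multiplication by some $s\in P$, and Definition~\ref{symLenka} only requires $\operatorname{Ad}(s)$ to act as $-\operatorname{id}$ on $\mathfrak{l}^{-1}/\mathfrak{p}$; nothing forces $\operatorname{Ad}(s)$ to be the identity on all of $\mathfrak{p}$. Your own parity $\theta=\operatorname{Ad}(s)$ with $s\in L_0$ already gives a symmetry at $eP$ on $L/P$ itself, so no detour is needed at that point. The genuine issue---which your construction sidesteps rather than solves---is to assemble symmetries at \emph{all} points into a smooth map $S$ satisfying (A3), since the naive $gP\mapsto gsg^{-1}$ depends on the coset representative $g$. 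The paper resolves this by passing to the Iwasawa decomposition: the maximal compact $K$ acts transitively on $L/P$ with stabiliser $H=K\cap L_0$, the parity restricts to an involution of $K$ fixing $H$ pointwise, one chooses $L$ so that it equals $\operatorname{Ad}(h)$ for some $h\in H$, and then $S_{gH}fH=ghg^{-1}fH$ (for $g,f\in K$) is well defined and smooth.
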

\begin{proof}
The construction is based on the Iwasawa decomposition \cite{odk6}[2.3.5] and on the classification of
the parabolic geometries \cite{odk6}[3.2].  Let $\delta$ be a Cartan involution,
$\Delta_r^+$ be the set of positive (restricted for the real case) roots, and
$\Sigma \subset \Delta_r^+$ be the subset of simple positive (restricted) roots
determining $\mathfrak{p}$.  Let
$\mathfrak{l}=\mathfrak{k}+\mathfrak{a}+\mathfrak{n}$ be the Iwasawa
decomposition. If $X=X_0+\sum_{\lambda \in \Delta_r}X_\lambda$ is decomposition of $X$ to the root spaces, then the
corresponding decomposition to $\mathfrak{k}+\mathfrak{a}+\mathfrak{n}$  is 

\[ (X_0\cap \mathfrak{k}+\sum_{\lambda \in
\Delta_r}(X_{-\lambda}+\delta X_{-\lambda}))+(X_0\cap
\mathfrak{a})+(\sum_{\lambda \in \Delta_r}(X_{\lambda}-\delta
X_{-\lambda})).\]

Now it is obvious that $\mathfrak{a}+\mathfrak{n}\subset
\mathfrak{p}$ and $\mathfrak{k}\cap \mathfrak{p}\subset \mathfrak{l}_0$. 
Due to  the Iwasawa decomposition on the group level, 
$K$ acts transitively on
$L/P$ cf.  \cite{odk6}[3.2.4] and \cite{odk6}[3.2.9].  Now we define
the involution $\sigma$ of $\mathfrak{k}$ by 
\[ X_{-\lambda}\mapsto (-1)^{ht(\lambda)}X_{-\lambda}.\]

The involutions $\sigma$ and $\delta$ 
commute and
$L_0$ is contained in the fixed point set of both involutions.

Let $L$ be such, that $\sigma$ is given by $h\in K\cap L_0=H$ (such $L$ 
exists
since $\sigma$ is an involution).  Then we define $S_{gH}fH=ghg^{-1}fH$ for
$g,f\in K$.  It clearly satisfies (A1)--(A3) and the inclusion $K\to G$
implies
that $S$ is covered by automorphisms of flat Cartan geometry.  Due to the
definition of $\sigma$, it acts as $-\operatorname{id}$ on 
$\mathfrak{l}_{-1}$, thus we get
the claim.
\end{proof}

We shall construct further nontrivial examples in the next sections.

\section{General construction of parabolic geometries with smooth systems of symmetries}

In this section we investigate the reflexion spaces, which are another generalizations of symmetric spaces from the definition III) and use them to find general construction of parabolic geometries with smooth systems of symmetries.

\begin{definition}
Let $M$ be a connected smooth manifold and $S:M\times M \to M$ a smooth
mapping.  If $S$ satisfies (A1), (A2) and (A3), then $(M,S)$ is called a
reflexion space.  A smooth mapping $f:M\to M'$ between reflexion spaces is a
morphism if $f(S_xy)=S'_{fx}fy$.
\end{definition}

The reflexion spaces were introduced and investigated by Loos in
\cite{odk8}.  We recall main steps of the investigation and aply the results
on symmetric parabolic geometries.

We denote $\rho_xy=S_yx$, $I(x)=T_xS_x$ and $J(x)=1/2(T_x\rho_x)$. Let
$x(t)$ be a smooth curve satisfying $x(0)=x$ and $x'(0)=X\in T_xM$, then we
obtain differentiating $S_{x(t)}x(t)=x(t)$ and $S_xS_xx(t)=x(t)$, that
$T_xS_xX+T_x\rho_xX=X$ and $(T_xS_x)^2X=X$ i.e.  $I=id_{TM}-2J$ and
$I^2=id_{TM}$.  Thus if we define $T^-M$ and $T^+M$ as eigenspaces of
eigenvalues $-1$ and $1$ of $I$, then $J$ is projection to $T^-M$.

Further we fix arbitrary $e\in M$ and for any $X\in T_eM$ we define the vector
field 
\[ R_e(X)(x)=\frac{1}{2}T_e\rho_{(S_ex)}X.\] 
Let $e(t)$ be a smooth curve satisfying $e(0)=e$ and $e'(0)=X\in T_eM$, then
we obtain differentiating $S_{S_{e(t)}e}S_ex=S_{e(t)}S_eS_{e(t)}S_ex$, that
$T_e\rho_{S_ex}T_e\rho_{e}X=T_e\rho_{S_eS_eS_ex}X+T_eS_eT_eS_eT_e\rho_{S_ex}X=2T_e\rho_{S_ex}X$
i.e.  $R_e(J(e)X)=R_e(X)$.  It will later turn out that the vector fields
$R_e(X)$ represent infinitesimal action of the group generated by all
symmetries.

We define for all $e\in M$ a tensor \[ T(X,Y)(e)=[R_e(X),R_e(Y)](e)\]
and call it torsion of the reflexion space.  



Loos derived the following formula for the torsion evaluated on vector fields, cf. \cite{odk8} Satz 4.5
\[T(X,Y)=J[JX,Y]+J[X,JY]-J[X,Y]-[JX,JY].\]
By the definition of the
torsion, the torsion vanishes if any of its arguments is from $T^+M$ 
since $R_e(X)=0$ for $X\in T^+M$. Thus, 
$-[X,Y]=T(X,Y)\in T^+M$ for $X,Y\in T^-M$ and $-J[X,Y]=T(X,Y)=0$ for $X,Y\in
T^+M$. In particular, $T^+M$ is integrable.

\begin{definition}
We say that a reflexion space $M$ has maximal torsion if vectors of the form $T(X,Y), X,Y\in T^{-}M$ span $T^+M$.
\end{definition}

Let us denote $F_e$ the integral subvariety of $T^+M$ through $e$.
The leaves $F_e$ have the following properties:

\begin{lemma}\label{leaf}
Let $M$ be a reflexion space, then for all points $x$, $y\in M$
\begin{enumerate}
\item $F_x$ is an embedded submanifold,
\item if $x\in F_y$, then $S_y=S_x$,
\item $F_x$ is diffeomorphic to $F_y$. 
\end{enumerate}
\end{lemma}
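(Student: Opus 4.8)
The plan is to prove the three claims in the order (2), (1), (3): embeddedness in (1) rests on the constancy of the symmetry along leaves established in (2), whereas (3) can be run independently by exploiting that every $S_z$ is an automorphism.

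\emph{Constancy of symmetries along leaves (2).} The starting point is the identity $R_e(X)=0$ for $X\in T^+_eM$ obtained above. Writing out $R_e(X)(x)=\tfrac12 T_e\rho_{S_ex}X$ and using that this vanishes as a vector field, I get $T_e\rho_{S_ex}X=0$ for every $x$; since $S_e$ is a bijection of $M$ (it is an involution by (A2)), the point $S_ex$ ranges over all of $M$, so in fact $T_e\rho_wX=0$ for all $w\in M$ whenever $X\in T^+_eM$. Now fix $x\in M$ and a smooth curve $x(t)$ in the leaf $F_x$, so that $x'(t)\in T^+_{x(t)}M$. Since $S_{x(t)}z=\rho_z(x(t))$, differentiating gives $\tfrac{d}{dt}S_{x(t)}z=T_{x(t)}\rho_z\big(x'(t)\big)=0$ for every $z$. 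Hence $S_{x(t)}=S_{x(0)}$ as maps $M\to M$, and since $F_x$ is a connected (immersed) manifold, $S_z=S_x$ for all $z\in F_x$, which is (2).

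\emph{Embeddedness (1).} I identify $F_x$ with an open piece of the fixed-point set $\operatorname{Fix}(S_x)$. By (2), every $z\in F_x$ satisfies $S_xz=S_zz=z$, so $F_x\subseteq\operatorname{Fix}(S_x)$. As $S_x$ is a smooth involution, $\operatorname{Fix}(S_x)$ is a closed embedded submanifold, and at any $z\in F_x$ its tangent space is the $(+1)$-eigenspace of $T_zS_x=T_zS_z=I(z)$, that is, exactly $T^+_zM=T_zF_x$. Thus the injective immersion $F_x\hookrightarrow M$ factors through the embedded submanifold $\operatorname{Fix}(S_x)$ and along $F_x$ has the same tangent spaces as $\operatorname{Fix}(S_x)$; being an injective immersion of equal dimension into $\operatorname{Fix}(S_x)$, it is a local diffeomorphism, hence an open map, hence a diffeomorphism onto an open subset of $\operatorname{Fix}(S_x)$. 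Composing with the embedding $\operatorname{Fix}(S_x)\hookrightarrow M$ shows that $F_x$ is embedded.

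\emph{All leaves are diffeomorphic (3).} Each $S_z$ is an automorphism of $(M,S)$ by (A3), hence preserves the intrinsically defined distribution $T^+M$ and maps leaves diffeomorphically to leaves: $S_z(F_x)=F_{S_zx}$. It therefore suffices to show the diffeomorphism type of $F_y$ is locally constant in $y$, since $M$ is connected. Consider $\mu=\rho_x\colon z\mapsto S_zx$; by definition $T_x\mu=2J(x)$, which has kernel $T^+_xM$ and image $T^-_xM$, so restricting $\mu$ to a small transversal $\Sigma_0$ through $x$ with $T_x\Sigma_0=T^-_xM$ yields a local diffeomorphism onto a transversal $\Sigma$ to the foliation at $x$. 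In a foliated chart about $x$, $\Sigma$ meets the plaque of every nearby leaf, so for $y$ close to $x$ there is $z\in\Sigma_0$ with $S_zx\in F_y$, whence $S_z(F_x)=F_{S_zx}=F_y$ is a diffeomorphism. This gives local constancy, and connectedness of $M$ finishes the claim.

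The main obstacle is (1): leaves of a foliation are in general only injectively immersed, and the argument genuinely needs the rigidity coming from the symmetries. The crux is the tangential matching $T_zF_x=T_z\operatorname{Fix}(S_x)$ for all $z\in F_x$, which turns $F_x$ into an open submanifold of the embedded set $\operatorname{Fix}(S_x)$; without (2) this identification breaks down. A secondary point to treat with care is the standard but not wholly trivial fact that the fixed-point set of a smooth involution is an embedded submanifold, which I would justify by Bochner linearization near the fixed points. This whole line of reasoning is essentially that of Loos \cite{odk8}.
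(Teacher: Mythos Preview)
Your argument is correct. The paper itself does not give a proof of this lemma; it simply cites Loos \cite{odk8}, Satz 6.1--6.3, for the three claims. Your write-up is a faithful and correct unpacking of Loos's original arguments: the key observation that $T_e\rho_w$ kills $T^+_eM$ (giving constancy of $S_x$ along leaves), the identification of $F_x$ as an open piece of the embedded fixed-point set $\operatorname{Fix}(S_x)$, and the transitivity via $S_z$ on nearby leaves using the transversal produced by $\rho_x$. Since the paper's ``proof'' is just the citation, your approach is not merely consistent with it but supplies exactly the content the citation points to.
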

\begin{proof} These results were proved by Loos:
\begin{enumerate}
\item \cite{odk8} Satz 6.1
\item \cite{odk8} Satz 6.2
\item \cite{odk8} Satz 6.3
\end{enumerate}
\end{proof}

Similarly to the symmetric spaces, Loos defined distinguished 
linear connections on each
reflexion space $M$ which have the given torsion $T$ and keep $J$
covariantly constant. 

First, choosing   an arbitrary torsion-free connection $D_XY$ 
on $M$ and writing
$X=X^++X^-, Y=Y^++Y^-$ for the decomposition of vector fields to
$T^+M\oplus T^-M$, we define the modified
covariant derivative $\nabla_XY$ of vector fields
by the following formula for functions $f: M\to \mathbb{R}$
\begin{equation*}
\begin{split}
(\nabla_XY(x))f&=X(Yf)-Y(R_x(JX)f)-X^+(R_x(JY)f)\\
&+(D_{X^+}Y^+-J(D_{X^+}Y^+))f-X^+(Y^+f).
\end{split}
\end{equation*}

The following lemma provides the requested connections, together with a 
list their properties which we shall 
need. The proof of this Lemma and further properties of the objects in
question can be found in 
\cite{odk8} Chapter 5:

\begin{lemma}\label{lemnabvl}
Let $M$ be a reflexion space and $\nabla$ the above connection. Given $p\in M$, 
let $\nabla^p=1/2(\nabla+(S_p)^*\nabla)$, then:
\begin{enumerate}
\item The linear connection $\nabla^p$ is invariant with respect to $S_p$, its torsion is $T$, and it leaves $J$ covariantly constant. Moreover, the values $\nabla^p_XY$ coincide for all points $p\in M$ if $X\in T^-M$ or $Y\in T^-M$.

\item For each $e\in M$, $F_e$ is totally geodesic submanifold°and $S_xF_e=F_{(S_xe)}$ for all $x\in M$.

\item For every smooth 
curve $\gamma: \mathbb{R}\to M$, $\gamma(0)=e$, 
$\gamma'(0)=X\in T^{-}M$, the following conditions are equivalent
\begin{enumerate}
\item $\gamma$ is geodesic of $\nabla^p$ for some $p\in M$,
\item $\gamma(2t-s)=S_{\gamma(t)}\gamma(s)$,
\item $\gamma$ is integral curve of $R_e(X)$.
\end{enumerate}
If conditions (a)-(c) are fulfilled, 
then the one-parameter subgroup of symmetries corresponding 
to $\gamma$ is $S_{\gamma(t/2)}S_e$.
\end{enumerate}
\end{lemma}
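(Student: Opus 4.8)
The plan is to reduce everything to two facts about the auxiliary connection $\nabla$ and then to exploit the averaging $\nabla^p=\frac12(\nabla+(S_p)^*\nabla)$ together with the fact that each symmetry is an automorphism of the reflexion space. From (A3) one obtains $S_{S_xy}=S_xS_yS_x$, and differentiating at the fixed point shows that $T_{S_xy}S_x$ intertwines $I$ at $S_xy$ with $I$ at $y$; hence every $S_x$ preserves the splitting $TM=T^+M\oplus T^-M$, so that $I$, $J$ and the torsion $T$ are $S_x$-invariant tensors. For statement~1, invariance of $\nabla^p$ under $S_p$ is automatic from $S_p$ being an involution, since then $(S_p)^*(S_p)^*\nabla=\nabla$ and $(S_p)^*\nabla^p=\frac12((S_p)^*\nabla+\nabla)=\nabla^p$. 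I would then check directly from the defining formula that $\nabla$ has torsion $T$ and satisfies $\nabla J=0$; because torsion pulls back as a tensor, averaging and the $S_p$-invariance of $T$ and $J$ give $\operatorname{Tor}_{\nabla^p}=\frac12(T+(S_p)^*T)=T$ and $\nabla^p J=0$. The final sentence of~1 is read off the formula: the arbitrary torsion-free connection $D$ enters $\nabla_XY$ only through the term $D_{X^+}Y^+-J(D_{X^+}Y^+)$, which vanishes as soon as $X\in T^-M$ (then $X^+=0$) or $Y\in T^-M$ (then $Y^+=0$). Since $(S_p)^*\nabla$ is the Loos connection built from $(S_p)^*D$, it agrees with $\nabla$ on such arguments, so $\nabla^p_XY=\nabla_XY$ independently of $p$.

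Statement~2 then follows formally. Involutivity of $T^+M$ is the identity $J[X,Y]=-T(X,Y)=0$ for $X,Y\in T^+M$ already recorded before the lemma, so the leaf $F_e$ exists by Frobenius. Total geodesy is immediate from $\nabla^p J=0$: for $Y$ tangent to $F_e$ we have $JY=0$, hence $0=\nabla^p_X(JY)=J\nabla^p_XY$, so $\nabla^p_XY\in\ker J=T^+M$; in particular the second fundamental form of $F_e$ vanishes. The identity $S_xF_e=F_{S_xe}$ holds because $S_x$ preserves $T^+M$, as shown above, and therefore carries the leaf through $e$ to the leaf through $S_xe$.

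For statement~3 I would run the standard symmetric-space argument organised around the functional equation~(b). Differentiating $\gamma(2t-s)=S_{\gamma(t)}\gamma(s)$ in $s$ at $s=t$ gives $-\gamma'(t)=I(\gamma(t))\gamma'(t)$, so $\gamma'(t)\in T^-M$; differentiating in $t$ and using $R_e(JX)=R_e(X)$ identifies $\gamma'$ with $R_e(\gamma'(0))$ along $\gamma$, which is the content of (b)$\Leftrightarrow$(c). For the equivalence with~(a) I would use that, by statement~1, the geodesic equation $\nabla^p_{\gamma'}\gamma'=0$ is $p$-independent for $\gamma'\in T^-M$, so "$\gamma$ is a geodesic of $\nabla^p$ for some $p$" is equivalent to "for all $p$", in particular for $p=\gamma(t)$; then $S_{\gamma(t)}$-invariance of $\nabla^{\gamma(t)}$ together with the reversal of velocity at the fixed point $\gamma(t)$ forces $S_{\gamma(t)}\gamma(\cdot)$ to be the geodesic with reversed velocity, namely $\gamma(2t-\cdot)$, which is~(b); the converse is differentiation of~(b). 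Finally the one-parameter group is read off directly: with $e=\gamma(0)$, applying~(b) twice yields $S_{\gamma(t/2)}S_e\,\gamma(s)=S_{\gamma(t/2)}\gamma(-s)=\gamma(s+t)$, so $t\mapsto S_{\gamma(t/2)}S_e$ translates $\gamma$ by $t$ and is a one-parameter subgroup of symmetries.

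The hard part will be the computational verification in statement~1 that the explicitly defined $\nabla$ really has torsion exactly $T$ and satisfies $\nabla J=0$; this is where the defining formula, the identities $I=\operatorname{id}-2J$ and $R_e(JX)=R_e(X)$, and the Loos expression $T(X,Y)=J[JX,Y]+J[X,JY]-J[X,Y]-[JX,JY]$ must all be combined, and it is the only genuinely non-formal step. Once it is in place, the averaging argument and the functional-equation manipulations behind statements~2 and~3 are routine.
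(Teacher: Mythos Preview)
Your proposal is correct, but it is considerably more ambitious than what the paper actually does. The paper's own ``proof'' of this lemma is almost entirely a citation: parts~(1) and~(3) are simply attributed to Loos (Satz~5.1 and Satz~5.7 of \cite{odk8}), and part~(2) is dispatched in one line by observing that $J$ is covariantly constant (your argument exactly) and that symmetries preserve the splitting $T^+M\oplus T^-M$ (again your argument).

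What you have written is, in effect, a self-contained reconstruction of the Loos results that the paper takes for granted. Your averaging argument for $S_p$-invariance, torsion, and $\nabla^p J=0$ is the right structure; your observation that $D$ enters $\nabla$ only through the $D_{X^+}Y^+$ term is the correct reason for $p$-independence on $T^-M$; and your functional-equation manipulation for (a)$\Leftrightarrow$(b)$\Leftrightarrow$(c) is the standard symmetric-space mechanism. The step you flag as ``the only genuinely non-formal step'' --- verifying from the explicit formula that $\nabla$ itself has torsion $T$ and $\nabla J=0$ --- is precisely the content of Loos's Satz~5.1, and it is a substantial computation; the paper sidesteps it entirely by citation. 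So your approach buys self-containment at the cost of that computation, while the paper's approach buys brevity at the cost of depending on \cite{odk8}. There is no mathematical divergence between the two: you are filling in what the paper outsources.
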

\begin{proof}

\begin{enumerate}
\item The first claim is proved in \cite{odk8} Satz 5.1.

\item The first part of the claim follows from the fact $J$ is parallel constant and the rest is a simple corollary, because the symmetries preserve the splitting $TM=T^-M\oplus T^+M$ in general.

\item The third part claim is proved in \cite{odk8} Satz 5.7.
\end{enumerate}
\end{proof}

Now, we can proceed to  relation with the symmetric parabolic geometries.

\begin{prop}
Let $M$ be connected and $(\mathcal{P}\to M,\omega)$ be a regular parabolic
geometry of type $(L,P)$ with an involutive smooth system of symmetries $S$. Then $(M,S)$ is a reflexion space with maximal torsion.
\end{prop}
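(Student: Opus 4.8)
The plan is to verify the two assertions separately, since almost all of the content lies in the second one. For the reflexion space structure, observe that (A1) and (A3) are built into Definition \ref{symLenka} together with the definition of a smooth system of symmetries, (A2) is exactly the involutivity hypothesis, and smoothness of $S$ is assumed as well. Hence $(M,S)$ is a reflexion space with no further work, and the whole weight of the proposition rests on the maximality of the torsion.

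First I would pin down the eigenspace decomposition $TM=T^+M\oplus T^-M$ of the involution $I=TS$. Each symmetry $S_x$ is by hypothesis covered by an automorphism of the parabolic geometry, so at its fixed point it preserves the filtration of $T_xM$ and acts on the associated graded $gr(T_xM)=\oplus_{i<0}\mathfrak{l}_i$ by $\mathrm{Ad}(s_x)$ for some $s_x\in P$. Property 2) of Definition \ref{symLenka} forces this action to be $-\mathrm{id}$ on $\mathfrak{l}_{-1}$. Since $\mathrm{Ad}(s_x)$ is a filtration- and bracket-preserving automorphism and, by Lemma \ref{lemreg1}, the whole of $\mathfrak{l}_-$ is generated by $\mathfrak{l}_{-1}$, the action on $\mathfrak{l}_{-i}$ is necessarily $(-1)^i$. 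A short linear-algebra argument, namely that a filtration-preserving involution whose graded action is diagonal with the grading as eigenspaces splits compatibly with the filtration, then identifies $T^-_xM$ with the odd part $\oplus_{i\text{ odd}}\mathfrak{l}_{-i}$ and $T^+_xM$ with the even part $\oplus_{i\text{ even}}\mathfrak{l}_{-i}$ of the graded tangent space.

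With this identification the maximality of the torsion becomes a statement about the algebraic Lie bracket. Recall that for sections $X,Y$ of $T^-M$ the reflexion torsion satisfies $T(X,Y)=-[X,Y]\in T^+M$. By regularity, the leading term of the vector-field bracket $[X,Y]$ in the associated graded is precisely the algebraic bracket $\mathfrak{l}_{-a}\times\mathfrak{l}_{-b}\to\mathfrak{l}_{-(a+b)}$. By Lemma \ref{lemreg1}, every even piece is generated by brackets of odd ones, since $\mathfrak{l}_{-2m}=[\mathfrak{l}_{-(2m-1)},\mathfrak{l}_{-1}]$ with both factors in the odd part. Thus, choosing sections of $T^-M$ whose leading terms lie in $\mathfrak{l}_{-(2m-1)}$ and $\mathfrak{l}_{-1}$, I obtain torsion values whose leading graded component exhausts $\mathfrak{l}_{-2m}$; running this through the filtration by induction on the filtration degree shows that $\{T(X,Y):X,Y\in T^-_xM\}$ surjects onto every graded piece of $T^+_xM$ and therefore spans $T^+_xM$.

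The main obstacle is this last step: passing from surjectivity onto the associated graded pieces of $T^+M$ to genuinely spanning $T^+M$ itself. Because the torsion only controls the leading graded term of the bracket, the lower-order corrections, which are themselves genuine components of $T^+M$, must be absorbed by the induction over the filtration degree, and one has to check that the leading behavior of the reflexion torsion really coincides with the regular parabolic bracket rather than merely being governed by it up to terms living in the wrong eigenspace. The compatibility of the $\pm1$-eigenspace splitting with the filtration established in the second step is the technical input that makes this bookkeeping go through.
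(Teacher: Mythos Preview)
Your argument is correct but follows a genuinely different route from the paper's. The paper does not analyze the eigenspace decomposition $T^\pm M$ in terms of the parabolic grading at all. Instead it first proves that the group generated by the symmetries acts transitively on $M$: the map $x\mapsto S_xS_ee$ has differential $2R_e$ at $e$, so the orbit through $e$ has $T^{-1}M$ in its tangent space, and regularity together with Lemma~\ref{lemreg1} then forces the orbit to be all of $M$. Having transitivity, the paper invokes a Lie-theoretic identity of Loos,
\[
[[R_e(X),R_e(Y)],R_e(Z)]=R_e\bigl([[R_e(X),R_e(Y)],R_e(Z)](e)\bigr),
\]
to collapse iterated brackets of the $R_e$'s: odd-length brackets reduce to a single $R_e$, even-length ones to a single commutator $[R_e,R_e]$. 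Since the latter evaluated at $e$ are exactly the torsion values and transitivity forces these brackets to span $T_eM$, maximality follows.

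Your approach is more elementary and more parabolic in flavour: you identify $T^\pm_xM$ with the odd/even graded pieces via the $(-1)^i$-action of the symmetry on $\mathfrak l_{-i}$, and then read off maximality directly from $\mathfrak l_{-2m}=[\mathfrak l_{-(2m-1)},\mathfrak l_{-1}]$ and regularity, avoiding Loos's identity entirely. The paper's route, on the other hand, produces transitivity of the symmetry group as a byproduct, and this is precisely the input needed for the subsequent proposition and for Theorem~\ref{them1}; your argument does not deliver transitivity for free, so it would have to be proved separately afterwards. One minor remark: the formula you quote is really $T(X,Y)=-[X,Y]^+$ (the $T^+$-component) rather than literally $-[X,Y]$, but this is harmless for your filtration induction since, as you correctly note, the $T^+$-projection has the same leading graded term when the target degree is even.
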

\begin{proof}
The system $S$ satisfies axioms (A1), (A2) and (A3) from the definition of
involutive smooth system of symmetries, so $(M,S)$ is a reflexion space. 

Let $N$ be the orbit through $e\in M$ of the group generated by symmetries. Definitely, the points of the form $S_xe=S_xS_ee\in N$ for all $x\in M$. This defines smooth mapping $f: M\to N: f(x)=S_xS_ee=\rho_{S_ee}x$. Clearly $T_ef(X)=2R_e(X)(e)$ for $X\in T^-_eM$, thus $T_ef(T^{-1}_eM)=T^{-1}_eM\subset T_eN$. Since this holds for arbitrary point $e$, the $T^{-1}M$ is a subdistribution of $TN$. The regularity of $\omega$ and lemma \ref{lemreg1} imply, that $T^{-1}M$ generates the whole $TM$ by the Lie bracket. Thus $TM\subset TN$ and consequently $M=N$.

We learned in lemma \ref{lemnabvl}, claim (3), that the integral curves of $R_e(X)$ are the one parameter subgroups of symmetries. This means that, $R_e(X)$ is a projection of an
infinitesimal automorphism of the parabolic geometry. Moreover, $R_e(X)$ for $X\in T^-M$ generate the entire $TM$, see above. 

Finally, let us remind the result in \cite{odk8} Satz 4.2 saying that $$[[R_e(X),R_e(Y)],R_e(Z)]=R_e([[R_e(X),R_e(Y)],R_e(Z)](e))$$ for all $X,Y,Z\in TM$. Thus, each bracket of even number of infinitesimal automorphisms can be in fact expressed  as a single bracket of two arguments, i.e. via the torsion $T$. In particular, the torsion generates $T^+M$.
\end{proof}

So we are interested in the structure of reflexion spaces with maximal torsion.

\begin{prop}
Let $M$ be reflexion space of maximal torsion, let $G$ be the group
generated by the symmetries, let $H$ be subgroup fixing $F_e$ for a given point $e$, let $K\subset
H$ be the subgroup fixing $e$, and let $h=S_e$, then:
\begin{enumerate}
\item If $M'$ is the leaf space of all $F_p$, then $M'$ is the symmetric space $G/H$
with $S_{fH}gH=fhf^{-1}gH$.

\item The projection $p:M\to M'$ is a morphism of reflexion spaces, $M=G/K$, $e=eK$, and $S_{fe}ge=fhf^{-1}ge$.

\item $G$ over $ M$ with the Maurer-Cartan form is the flat Cartan geometry of type
$(G,K)$ with an involutive smooth system of symmetries $S$ and it is
the correspondence space to the symmetric space $G/H$.
\end{enumerate}
\end{prop}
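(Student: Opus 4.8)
The plan is to derive the entire proposition from the single structural fact recorded in Lemma~\ref{leaf}(2): the symmetry $S_x$ depends only on the leaf $F_x$ through $x$. Consequently $S$ descends to the leaf space, and the whole statement reduces to organizing the homogeneous structure of $M$ and of its leaf space $M'$ under the group $G$ generated by the symmetries, and then reading off the Cartan-geometric consequences. I will treat $G$ as a Lie group acting smoothly on $M$, as provided by Loos' theory \cite{odk8}.

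First I would establish transitivity of $G$ on $M$, which is where the maximal torsion hypothesis enters. Evaluating the fields $R_e(X)$ at $e$ gives $R_e(X)(e)=J(e)X=X$ for $X\in T^-_eM$, so the generators $R_e(X)$ already span $T^-_eM$; their brackets evaluate to the torsion values $T(X,Y)(e)=[R_e(X),R_e(Y)](e)$, which span $T^+_eM$ precisely by the maximal torsion assumption. Since the $R_e(X)$ and all their brackets are infinitesimal generators of the $G$-action (Lemma~\ref{lemnabvl}(3)), the evaluation map from the Lie algebra of $G$ onto $T_eM$ is surjective; hence every $G$-orbit is open, and connectedness of $M$ forces a single orbit. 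Thus $G$ acts transitively with isotropy $K$ at $e$, giving $M=G/K$ and $e=eK$.

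Next I would identify the leaf space. The subgroup $H$ preserves $F_e$, and $G$ permutes the leaves by $gF_p=F_{gp}$ (the identity $S_xF_e=F_{S_xe}$ of Lemma~\ref{lemnabvl}(2) propagates to all products of symmetries), so $G$ is transitive on leaves with stabiliser $H$ and the set of leaves is $G/H$. To match this with the geometric leaf space I would prove $F_e=H\cdot e$: since $T^+M$ is integrable, $\mathfrak h=\{Z\in\mathfrak g:Z(e)\in T^+_eM\}$ is the isotropy algebra of the leaf, so $H\cdot e$ is an integral manifold of $T^+M$ through $e$ of full leaf dimension; every $H$-orbit in the connected leaf $F_e$ is then open, and a connectedness argument yields $F_e=H\cdot e$. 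With $K\subset H$ closed this exhibits $p:M=G/K\to G/H=M'$ as the natural submersion, and since $S$ descends it is a morphism of reflexion spaces. The induced symmetry $S'_{[x]}$ acts as $-\operatorname{id}$ on $T_{[x]}M'\cong T^-_xM$, so $(M',S')$ satisfies (A1)--(A4) and is a symmetric space in the sense of III); its homogeneous realization is $M'=G/H$. The explicit formulas then come from the conjugation identity $S_{fx}=fS_xf^{-1}$, which follows by induction from (A3), together with $h=S_e$: as $h$ fixes $e$ and $F_e$ we have $h\in K\subset H$, so $S_{fe}=fhf^{-1}$ gives $S_{fe}ge=fhf^{-1}ge$ and $S_{fH}gH=fhf^{-1}gH$, matching the standard symmetric-space formula $f\sigma(f^{-1}g)$ with $\sigma=\operatorname{conj}_h$.

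Part (3) is then formal. The bundle $G\to G/K=M$ with the Maurer--Cartan form is by definition the flat homogeneous model of type $(G,K)$, its curvature vanishing by the Maurer--Cartan equation. Each symmetry $S_{fe}=fhf^{-1}$ is induced by the left translation $L_{fhf^{-1}}$ on $G$, which preserves the Maurer--Cartan form and hence is an automorphism of the Cartan geometry covering $S_{fe}$; this exhibits $S$ as an involutive smooth system of symmetries covered by automorphisms, involutivity following from $(fhf^{-1})^2=fh^2f^{-1}=e$. Since $K$ is a closed subgroup of $H$, the correspondence space construction \cite[1.5.13]{odk6} applied to the homogeneous model $G\to G/H=M'$ of type $(G,H)$ produces exactly $G\to G/K=M$ of type $(G,K)$ with the same vanishing curvature, which is the asserted identification. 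I expect the main obstacle to be the leaf-space step: turning the set-theoretic quotient $G/H$ into the geometric leaf space, that is, proving $F_e=H\cdot e$ and that $\mathfrak h$ is genuinely the isotropy algebra of the leaf, where one must combine the integrability of $T^+M$, the embeddedness of leaves from Lemma~\ref{leaf}, and homogeneity; everything else is either a direct transcription of Loos' results or a formal consequence of the homogeneous-model and correspondence-space machinery.
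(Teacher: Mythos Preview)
Your proposal is correct and uses the same ingredients as the paper, but organizes them in the reverse order. The paper first constructs the leaf space $M'$ as a smooth manifold directly from Lemma~\ref{leaf}(1),(3) (embeddedness and mutual diffeomorphism of the leaves), then checks that the descended map satisfies (A1)--(A4) via Lemma~\ref{lemnabvl}(2) and Lemma~\ref{leaf}(2), and only afterwards invokes the equivalence III)~$\leftrightarrow$~II) to get the homogeneous description; transitivity of $G$ on $M$ is proved separately for part~(2). You instead establish transitivity $M=G/K$ first and then identify the leaf space with $G/H$ by showing $F_e=H\cdot e$, which lets you read off the smooth structure on $M'$ from the coset space rather than from Loos' leaf lemmas. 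Your route is slightly more self-contained, since it avoids quoting the quotient-manifold criterion for the leaf space; the paper's route is shorter because it outsources that step to Loos. Note that your connectedness argument for $F_e=H\cdot e$ can be shortened: once $G$ is transitive on $M$ and permutes leaves, any $g\in G$ carrying $e$ to a point $f\in F_e$ must send $F_e$ to $F_f=F_e$, hence $g\in H$, so $H$ is already transitive on $F_e$. Part~(3) is handled identically in both: left translations by $fhf^{-1}$ are automorphisms of the Maurer--Cartan form, and the correspondence space identification is immediate.
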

\begin{proof}
\begin{enumerate}
\item Properties from lemma \ref{leaf}, claims (1,3) are sufficient for the leaf
space $M'$ having the structure of smooth manifold, such that the 
projection $p:
M\to M'$ is a submersion.  Further, lemma \ref{lemnabvl} (2) and lemma 
\ref{leaf}
(2) show that symmetries descend to symmetries on $M'$.  Since (A1), (A2)
and (A3) are still satisfied and also (A4) holds, we get the requested claim from
equivalence of definitions III) and II) of symmetric spaces.

\item Lemma \ref{lemnabvl} (3) implies, that $G$ is a Lie transformation group
of $M$ with Lie algebra generated by vector fields $R_e$. Consequently the maximality of the
torsion ensures, that vector fields $R_e$ and their brackets generate $T_eM$. 
Thus $G$ acts transitively and the claim follows.

\item The symmetries act by left multiplication of elements of $G$ and so they are morphisms of this Cartan
geometry according to the proposition \ref{authomod}. 
\end{enumerate}
\end{proof}

If we combine the two latter propositions, we arrive at the principal result describing general construction of parabolic geometries with involutive smooth system of symmetries:

\begin{theorem}\label{them1}
Let $M$ be connected and $(\mathcal{P}\to M,\omega)$ be a regular parabolic
geometry of type $(L,P)$ with an involutive smooth system of symmetries $S$. 
Then it is the homogeneous Cartan geometry $M=G/K$, where $G$ is the group
generated by symmetries.  Thus all regular parabolic geometries with
involutive smooth system of symmetries are extensions of reflexion spaces
$G/K$.
\end{theorem}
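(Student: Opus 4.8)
The plan is to chain together the two preceding propositions and then invoke Proposition \ref{them2}. First I would apply the first of the two propositions above to the data $(\mathcal{P}\to M,\omega)$ and $S$: since $M$ is connected, the geometry is regular, and $S$ is an involutive smooth system of symmetries, that proposition gives at once that $(M,S)$ is a reflexion space of maximal torsion. This places me squarely in the hypotheses of the second proposition, which produces the group $G$ generated by the symmetries, the stabilizer $K$ of a chosen base point $e$, and the identification $M=G/K$ with $G$ acting transitively.

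The crucial step is to upgrade this transitive action on the base $M$ to an action by automorphisms of the Cartan geometry, so that $(\mathcal{P}\to M,\omega)$ becomes homogeneous in the sense of the earlier definition. Here I would use condition (3) of Definition \ref{symLenka}: each symmetry $S_x$ is by hypothesis covered by an automorphism $\tilde{S}_x$ of $(\mathcal{P},\omega)$. The $\tilde{S}_x$ generate a subgroup $\tilde{G}$ of the automorphism group of the Cartan geometry, and $\tilde{G}$ projects onto $G$ because each $\tilde{S}_x$ covers $S_x$. Since $G$ already acts transitively on $M$, so does $\tilde{G}$, whence $(\mathcal{P}\to M,\omega)$ is a homogeneous Cartan geometry of type $(L,P)$ over $G/K$. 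Applying Proposition \ref{them2} then yields that this geometry is the extension of the homogeneous model $G\to G/K$ by some pair $(i,\alpha)$.

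It remains to recognize $G\to G/K$ as a reflexion space. This is exactly part (3) of the second proposition: $G$ over $M$ equipped with the Maurer--Cartan form is the flat Cartan geometry of type $(G,K)$ carrying the involutive smooth system of symmetries $S$, being the correspondence space to the symmetric space $G/H$. Combining this with the previous paragraph gives the assertion, namely that the original geometry is the extension of the reflexion space $G/K$; and since the argument used nothing about $(\mathcal{P},\omega)$ beyond the stated hypotheses, every regular parabolic geometry with involutive smooth system of symmetries arises this way.

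I expect the main obstacle to be the middle step, namely verifying cleanly that the covering automorphisms assemble into a genuine group of automorphisms surjecting onto $G$. That products and inverses of the $\tilde{S}_x$ remain automorphisms is immediate, since automorphisms are closed under composition; the more delicate point is that the resulting $\tilde{G}$ really surjects onto all of $G$, which follows because $G$ is by definition generated by the $S_x$, each of which lifts. The possible non-uniqueness of the lift $\tilde{S}_x$ is harmless for homogeneity, as transitivity on $M$ is all that the definition of a homogeneous Cartan geometry demands.
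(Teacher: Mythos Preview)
Your argument is correct and follows the same route as the paper: combine the two preceding propositions to obtain $M=G/K$ as a reflexion space of maximal torsion, then invoke Proposition~\ref{them2}. The paper's own proof is a two-line sketch that omits precisely the point you flag as the ``crucial step'' --- namely, that the symmetries $S_x$ are by hypothesis covered by Cartan automorphisms, so the group they generate acts transitively by automorphisms and the geometry is homogeneous in the required sense --- and your discussion of the lifts $\tilde{S}_x$ fills this in correctly.
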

\begin{proof}
We have already deduced that $M=G/K$ follows from the previous two propositions, but then the theorem follows
from proposition \ref{them2}.
\end{proof}

\section{Construction of parabolic contact geometries with smooth system of symmetries}

In this section we investigate the construction in the case of parabolic contact geometries. 

So let $G/K$ be a reflexion space with an underlying symmetric space (II) $G/H$ with the element $h\in H$ defining the symmetries and $(\alpha,i)$ be an extension of homogeneous model $G\to G/K$ to parabolic contact geometry of type $(L,P)$. Let $\mathfrak{g}=\mathfrak{k}+\mathfrak{h/k}+\mathfrak{m}$ be decomposition to $\pm 1$ eigenspaces. Notice $Ad(i(h))^2=Ad(i(h^2))=id$ and the corresponding decomposition $\mathfrak{l}=\alpha(\mathfrak{h/k})+\alpha(\mathfrak{m})+
\alpha(\mathfrak{k})+\mathfrak{f}^++\mathfrak{f}^-$ to $\pm 1$ eigenspaces, where $\mathfrak{f}^++\mathfrak{f}^-$ are the remaining parts of the eigenspaces outside the image of $\alpha$. Thus $i(h)\in L_0$, where $L_0$ is Levi subgroup of $P$ with Lie algebra $\mathfrak{l}_0$.

In the following theorem, we characterize all regular parabolic contact geometries with smooth system of symmetries and semisimple group of symmetries.

\begin{theorem}\label{them3}
Let $G/K$ be a reflexion space with an underlying semisimple symmetric space $G/H$ and $(\alpha,i)$ be an extension to parabolic contact geometry of type $(L,P)$ of dimension $2n+1$. Then

1) $Ad(i(h))$ acts as $(-1)^{i}$ on $\mathfrak{l}_{i}$, i.e. it is an extension to a parabolic contact geometry with involutive smooth system of symmetries.

2) $\alpha(\mathfrak{m})\subset \mathfrak{l}_{-1}+\mathfrak{l}_{1}$, $dim(G/H)=2n$ and $dim(H/K)=1$.

3) $\alpha$ restricted to $\mathfrak{h}$ is a Lie algebra homomorphism

4) $i(K)\subset L_0$, the geometry $G/K$ is reductive and $\mathfrak{h/k}$ is in center of $\mathfrak{h}$. There is a well-defined map $\alpha^{-1}$ from $\mathfrak{l}_{-1}+\mathfrak{l}_{-2}$ to $\mathfrak{h/k}+\mathfrak{m}$ given by partially inverting $\alpha$.

5) If the extension is a regular parabolic geometry, then $G/H$ has got only pseudo-hermitian or para-pseudo-hermitian simple factors.
\end{theorem}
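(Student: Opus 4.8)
The plan is to reduce to simple factors and produce, on each factor, an invariant complex or para-complex structure coming from the one-dimensional center of $\mathfrak h$; such a structure is exactly what characterizes the pseudo-hermitian and para-pseudo-hermitian symmetric spaces recalled in the Example. So, using that a semisimple symmetric space is a product of simple ones, I write $\mathfrak g=\bigoplus_\gamma\mathfrak g_\gamma$ with $\mathfrak g_\gamma=\mathfrak h_\gamma\oplus\mathfrak m_\gamma$ simple, $\mathfrak h=\bigoplus_\gamma\mathfrak h_\gamma$, $\mathfrak m=\bigoplus_\gamma\mathfrak m_\gamma$, and it suffices to treat one factor. By part (2) the space $\mathfrak{h/k}$ is one-dimensional; I fix a generator $Z$, chosen $B$-orthogonal to $\mathfrak k$ (possible since $Z\in\mathfrak z(\mathfrak h)$ by part (4) and the Killing form $B$ is nondegenerate on $\mathfrak z(\mathfrak h)$). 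By part (4), $\operatorname{ad}(Z)$ commutes with $\mathfrak h$ and preserves each $\mathfrak m_\gamma$. Feeding the flat source into the extension-curvature formula gives $\kappa_\alpha(X,Y)=[\alpha X,\alpha Y]-\alpha([X,Y])$; regularity forces the $\mathfrak l_{-2}$-component of $\kappa_\alpha$ to vanish on $\mathfrak m\times\mathfrak m$, and since $\alpha(\mathfrak k)\subset\mathfrak l_0$ the only contribution to the $\mathfrak l_{-2}$-part of $\alpha([X,Y])$ comes from $\alpha(Z)$. Writing $f\neq0$ for the $\mathfrak l_{-2}$-component of $\alpha(Z)$, this yields $[(\alpha X)_{-1},(\alpha Y)_{-1}]=\omega(X,Y)\,f$, where $\omega(X,Y)$ is the $Z$-coefficient of $[X,Y]\in\mathfrak h$. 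As the contact bracket is nondegenerate and $\alpha$ induces an isomorphism $\mathfrak m\to\mathfrak l_{-1}$, the pullback $\omega$ is a nondegenerate, $\mathfrak h$-invariant $2$-form on $\mathfrak m$.

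The decisive step is to identify $\omega$ with the invariant form built from $Z$ and $B$. Because $Z$ is central in $\mathfrak h$ we have $B(Z,[\mathfrak h,\mathfrak h])=0$, and by the orthogonality choice $B(Z,\mathfrak k)=0$, so $B(Z,\cdot)|_{\mathfrak h}=B(Z,Z)\,\beta_Z$, where $\beta_Z\colon\mathfrak h\to\mathbb R$ is the $Z$-coefficient. Invariance of $B$ then gives $g(\operatorname{ad}(Z)X,Y)=B(Z,[X,Y])=B(Z,Z)\,\omega(X,Y)$ for $X,Y\in\mathfrak m$, where $g=B|_{\mathfrak m}$. Moreover $B(Z,Z)\neq0$: otherwise $g(\operatorname{ad}(Z)\cdot,\cdot)\equiv0$, forcing $\operatorname{ad}(Z)|_{\mathfrak m}=0$ and, since $[\mathfrak m,\mathfrak m]=\mathfrak h$, $\operatorname{ad}(Z)=0$, i.e. $Z=0$. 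Hence $\omega=B(Z,Z)^{-1}g(\operatorname{ad}(Z)\cdot,\cdot)$, and nondegeneracy of $\omega$ forces $\operatorname{ad}(Z)|_{\mathfrak m}$ to be invertible. In particular $\operatorname{ad}(Z)$ is nonzero on every simple factor $\mathfrak m_\gamma$. I expect this to be the \emph{main obstacle}: it is exactly the statement that no simple factor can decouple from the one-dimensional contact direction, and everything downstream rests on it.

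With invertibility in hand the conclusion is Schur's lemma. Fix a factor $\mathfrak m_\gamma$; here $\operatorname{ad}(Z)$ commutes with $\mathfrak h_\gamma$, is invertible, and lies in $\mathfrak{sp}(\omega)$ (the infinitesimally symplectic endomorphisms, by $\mathfrak h$-invariance of $\omega$). The isotropy representation of a simple symmetric space is either irreducible or a sum $W\oplus W^*$ of two inequivalent dual irreducibles. In the irreducible case the commutant is $\mathbb R$, $\mathbb C$ or $\mathbb H$; the $\omega$-skew condition kills the scalar part, so $\operatorname{ad}(Z)|_{\mathfrak m_\gamma}^2=\lambda_\gamma\operatorname{id}$ with $\lambda_\gamma<0$ (here $\lambda_\gamma=0$ contradicts invertibility, while $\lambda_\gamma>0$ would give a nontrivial $\mathfrak h_\gamma$-invariant involution, contradicting irreducibility). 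In the reducible case $\omega$ pairs $W$ with $W^*$, so $\operatorname{ad}(Z)$ acts by $\pm a$ on the two summands and $\operatorname{ad}(Z)|_{\mathfrak m_\gamma}^2=a^2\operatorname{id}$ with $a\neq0$.

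Thus in every case $\operatorname{ad}(Z)|_{\mathfrak m_\gamma}^2=\lambda_\gamma\operatorname{id}$ with $\lambda_\gamma\neq0$, and $J_\gamma:=|\lambda_\gamma|^{-1/2}\operatorname{ad}(Z)|_{\mathfrak m_\gamma}$ is an $\mathfrak h_\gamma$-invariant complex structure when $\lambda_\gamma<0$ and an invariant para-complex structure when $\lambda_\gamma>0$. By the characterization recalled in the Example, the factor is then pseudo-hermitian in the first case and para-pseudo-hermitian in the second, which proves the claim. The only routine verifications left aside are the standard dichotomy for the isotropy representation of a simple symmetric space and the elementary computation that an $\omega$-skew element of $\mathbb R$, $\mathbb C$ or $\mathbb H$ has vanishing scalar part.
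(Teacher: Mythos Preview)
Your proposal addresses only part (5), taking parts (1)--(4) as established; the paper's proof treats all five parts, with (1)--(4) occupying most of the argument. For part (5) itself, your route is genuinely different from the paper's. The paper observes that $\mathfrak h/\mathfrak k$ lies in the center of $\mathfrak h$ (part (4)), asserts that under regularity each simple factor of $G/H$ has nontrivial center, and then simply appeals to the classification of simple symmetric spaces with that property --- these are exactly the (para)-pseudo-hermitian ones, as listed in the paper's Example. You instead build the invariant complex or para-complex structure directly: regularity yields a nondegenerate $\mathfrak h$-invariant $2$-form $\omega$ on $\mathfrak m$, you identify $\omega$ with $g(\operatorname{ad}(Z)\cdot,\cdot)$ up to scalar, conclude that $\operatorname{ad}(Z)$ is invertible on every $\mathfrak m_\gamma$, and then run Schur's lemma on the commutant to produce $J_\gamma$. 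This buys you a self-contained argument that does not quote the classification tables, at the price of the isotropy-representation dichotomy and the commutant computation you leave as ``routine verifications.''

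One technical point deserves care. Your choice of $Z$ simultaneously as a generator of $\mathfrak h/\mathfrak k$ \emph{and} $B$-orthogonal to $\mathfrak k$ need not be possible: the $B$-orthogonal complement of $\mathfrak k$ in $\mathfrak h$ is one-dimensional and lies in $\mathfrak z(\mathfrak h)$, but it can fall inside $\mathfrak k$ precisely when $B|_{\mathfrak k\cap\mathfrak z(\mathfrak h)}$ is degenerate (e.g.\ when $\mathfrak z(\mathfrak h)$ has indefinite signature and $\mathfrak k$ meets it in a null line). Your subsequent deduction $B(Z,Z)\neq 0$ already presupposes this choice, so the argument is circular at that step. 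The clean fix is to work factor by factor: the functional $\ell_\gamma$ picking out the $Z$-coefficient on $\mathfrak h_\gamma$ is nonzero (by nondegeneracy of $\omega|_{\mathfrak m_\gamma}$) and vanishes on $[\mathfrak h_\gamma,\mathfrak h_\gamma]\subset\mathfrak k$, hence equals $B_\gamma(\zeta_\gamma,\cdot)|_{\mathfrak h_\gamma}$ for a unique $\zeta_\gamma\in\mathfrak z(\mathfrak h_\gamma)$; then $\omega|_{\mathfrak m_\gamma}=g_\gamma(\operatorname{ad}(\zeta_\gamma)\cdot,\cdot)$ on the nose, with no denominator, and your Schur argument goes through unchanged with $\zeta_\gamma$ in place of $Z$.
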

\begin{proof}
Since $\mathfrak{l}$ is simple, there are no simple ideals of $\mathfrak{l}$ in $\mathfrak{l}_0$ or $\mathfrak{l}$. Then employing the brackets we get, that the action of $Ad(i(h))$ is $(-1)^{i}$ on $\mathfrak{l}_{i}$, thus we get 1) and 2).

We know that $\mathfrak{l}_{-2}$ is one dimensional and $\alpha(\mathfrak{h})\subset \mathfrak{l}_{-2}+\mathfrak{l}_{0}+\mathfrak{l}_{2}$. Since $\kappa(\mathfrak{l}_{-2},\mathfrak{l}_{-2})=0$, this already proves 3).

Now, $\alpha(\mathfrak{k})\subset \mathfrak{l}_{0}+\mathfrak{l}_{2}$. For each parabolic contact geometry, $\mathfrak{l}_{-2}+\mathfrak{l}_{2}$ generates subalgebra $\mathfrak{z}$ isomorphic to $\mathfrak{sl}(2,\mathbb{R})$ or $\mathfrak{su}(2)$, and these are the only parts of $\alpha(\mathfrak{h})\subset \mathfrak{l}_{-2}+\mathfrak{l}_{0}+\mathfrak{l}_{2}$ with nontrivial action on $\mathfrak{l}_{-2}$. Since $\mathfrak{g}$ is semisimple, $\mathfrak{h}$ contains only semisimple or abelian simple factors. We investigate all possible cases of $\alpha(\mathfrak{h})\cap \mathfrak{z}$:

a) $\alpha(\mathfrak{h})\cap \mathfrak{z}$ is nilpotent, then 4) holds.

b) $\alpha(\mathfrak{h})\cap \mathfrak{z}=\mathfrak{z}$. Thus preimage of $\mathfrak{z}$ contains subalgebra isomorphic to $\mathfrak{z}$. Then since $\mathfrak{z}$ is not factor of $\mathfrak{g}$, the root space in $\mathfrak{z}\cap \mathfrak{k}$ has nontrivial action on $\mathfrak{m}$ and its image in $\mathfrak{l}_{2}$ has trivial action. Contradiction.

c) $\alpha(\mathfrak{h})\cap \mathfrak{z}$ is solvable. Since $\mathfrak{h}$ does not contain solvable factors, there is subalgebra of $\mathfrak{h}$ isomorphic to $\mathfrak{z}$ with an solvable subalgebra mapped onto $\alpha(\mathfrak{h})\cap \mathfrak{z}$. The image of root space in $\mathfrak{z}\cap \mathfrak{k}$ maps $\mathfrak{l}_{-2}$ to $\mathfrak{l}_{0}$. Contradiction.

Thus, if the extension is a regular parabolic geometry, then each simple factor in $G/H$ has a non-trivial center. From the classification of the semisimple symmetric spaces with non-trivial center of $\mathfrak{h}$ we know, that only those in 5) may occur.
\end{proof}

Now, the $Ad(i(h))$-action restricts the curvature of the extension in the following way.

\begin{lemma}\label{norcur}
Let $G/K$ be a reflexion space with underlying semisimple symmetric space $G/H$ and $(\alpha,i)$ extension to contact parabolic geometry of type $(L,P)$ of dimension $2n+1$. Then

1) $\kappa(X,Y)=[\alpha(\alpha^{-1}(X)),\alpha(\alpha^{-1}(Y))]-\alpha([\alpha^{-1}(X),\alpha^{-1}(Y)])$ for $X,Y\in \mathfrak{l}_{-2}+\mathfrak{l}_{-1}$.

2) $\kappa(\mathfrak{l}_{-1},\mathfrak{l}_{-1})\subset \mathfrak{l}_{-2}+\mathfrak{l}_{0}+\mathfrak{l}_{2}$, $\kappa(\mathfrak{l}_{-1},\mathfrak{l}_{-2})\subset \mathfrak{l}_{-1}+\mathfrak{l}_{1}$, $\kappa(\mathfrak{l}_{-2},\mathfrak{l}_{-2})=0$.

3) If the underlying symmetric space is simple, then the extension is regular if and only if the $\mathfrak{l}_{-2}$ part of $\alpha$ is given by the bracket on $\mathfrak{l}_{-1}$.

4) The extension to regular normal parabolic geometry is always torsion free, and the normality conditions are $\sum_i [Z_i,\kappa(X^{-1},X_i)]=0$ and $\sum_i \kappa([Z_i,X^{-2}],X_i)=0$ for any $X=X^{-1}+X^{-2}\in \mathfrak{l}_{-1}+\mathfrak{l}_{-2}$, where $X_i$ is basis of $\mathfrak{l}/\mathfrak{p}$ and $Z_i$ dual basis to $X_i$.
\end{lemma}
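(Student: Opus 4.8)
The plan is to handle the four claims in sequence: the first three reduce to grading bookkeeping built on Theorem \ref{them3}, while the last rests on the normalization theory for regular parabolic geometries. For claim (1), I would start from the extension curvature formula $\kappa_\alpha=\alpha\circ\kappa+[\alpha,\alpha]-\alpha([\ ,\ ])$. The underlying geometry here is the homogeneous model $G\to G/K$ with its Maurer--Cartan form, whose curvature vanishes by the Maurer--Cartan equation; hence $\kappa_\alpha(X,Y)=[\alpha(X),\alpha(Y)]-\alpha([X,Y])$ for $X,Y\in\mathfrak g/\mathfrak k$. Identifying $\mathfrak g/\mathfrak k$ with $\mathfrak l/\mathfrak p$ through $\alpha$ and substituting the partial inverse $\alpha^{-1}\colon\mathfrak l_{-1}+\mathfrak l_{-2}\to\mathfrak{h/k}+\mathfrak m$ from Theorem \ref{them3}(4) for the arguments gives claim (1) verbatim.

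For claim (2), I would use that $S_e$ is covered by an automorphism acting on the fibre over the fixed point by $i(h)\in L_0$, together with the homogeneity (hence constancy) of $\kappa$; naturality then yields the equivariance $\mathrm{Ad}(i(h))\kappa(X,Y)=\kappa(\mathrm{Ad}(i(h))X,\mathrm{Ad}(i(h))Y)$. Since $\mathrm{Ad}(i(h))$ acts by $(-1)^i$ on $\mathfrak l_i$ by Theorem \ref{them3}(1), the value $\kappa(\mathfrak l_{-1},\mathfrak l_{-1})$ must lie in the $+1$-eigenspace $\mathfrak l_{-2}+\mathfrak l_0+\mathfrak l_2$ and $\kappa(\mathfrak l_{-1},\mathfrak l_{-2})$ in the $-1$-eigenspace $\mathfrak l_{-1}+\mathfrak l_1$, while $\kappa(\mathfrak l_{-2},\mathfrak l_{-2})=0$ is forced by $\dim\mathfrak l_{-2}=1$. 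The same bounds also drop directly out of claim (1) using $[\mathfrak m,\mathfrak m]\subset\mathfrak h$, $\alpha(\mathfrak k)\subset\mathfrak l_0+\mathfrak l_2$, $\alpha(\mathfrak{h/k})\subset\mathfrak l_{-2}+\mathfrak l_2$ and the centrality of $\mathfrak{h/k}$, giving a cross-check.

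For claim (3), the observation is that by (2) the only component of $\kappa$ on negative arguments that can break regularity is the $\mathfrak l_{-2}$-part of $\kappa(\mathfrak l_{-1},\mathfrak l_{-1})$. By (1) this component equals the contact bracket $[X,Y]_{\mathfrak l_{-2}}$ minus the $\mathfrak l_{-2}$-part of $\alpha$ applied to the $\mathfrak{h/k}$-component of $[\alpha^{-1}(X),\alpha^{-1}(Y)]\in[\mathfrak m,\mathfrak m]$. Simplicity of $G/H$ gives $\mathfrak h=[\mathfrak m,\mathfrak m]$, so the symmetric-space bracket $\bigwedge^2\mathfrak m\to\mathfrak{h/k}$ is nonzero and the one-dimensional $\mathfrak l_{-2}$-part of $\alpha|_{\mathfrak{h/k}}$ is pinned down by requiring this component to vanish; this is exactly the statement that regularity holds iff the $\mathfrak l_{-2}$-part of $\alpha$ is given by the bracket on $\mathfrak l_{-1}$.

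For claim (4), I would obtain torsion-freeness by noting that the harmonic torsion sits in $(\bigwedge^2\mathfrak l_{-1}^*)\otimes\mathfrak l_{-1}$, i.e. it is the $\mathfrak l_{-1}$-component of $\kappa(\mathfrak l_{-1},\mathfrak l_{-1})$, which vanishes by (2); since in a regular normal parabolic geometry the lowest homogeneous component of the torsion is harmonic and one induces upward in homogeneity, vanishing harmonic torsion forces the entire torsion to vanish. For the normality conditions I would expand $\partial^*\kappa=0$ through the Kostant codifferential written with dual bases $X_i$ of $\mathfrak l/\mathfrak p$ and $Z_i$ of $\mathfrak p_+$; evaluating on $X=X^{-1}+X^{-2}$ and discarding the terms killed by torsion-freeness and by the grading bounds of (2) leaves precisely $\sum_i[Z_i,\kappa(X^{-1},X_i)]=0$ and $\sum_i\kappa([Z_i,X^{-2}],X_i)=0$. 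The hard part will be this final reduction: one must track exactly which homogeneity components of the codifferential survive once torsion-freeness and the $\mathrm{Ad}(i(h))$-parity are imposed, and justify rigorously that vanishing harmonic torsion propagates to the remaining homogeneity-two torsion component, the $\mathfrak l_{-1}$-part of $\kappa(\mathfrak l_{-1},\mathfrak l_{-2})$.
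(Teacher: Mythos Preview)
Your proposal is correct and follows essentially the same route as the paper for claims (1)--(3): the curvature formula via the flat Maurer--Cartan form on $G\to G/K$ together with the partial inverse $\alpha^{-1}$, the $\mathrm{Ad}(i(h))$-parity argument for the grading bounds, and the identification of regularity with the $\mathfrak{l}_{-2}$-component of $\alpha$ being fixed by the bracket.

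For claim (4) there is a minor methodological difference. The paper does not invoke the general ``harmonic torsion vanishes $\Rightarrow$ full torsion vanishes'' theorem; instead it writes out the codifferential condition explicitly as
\[
\sum_i \kappa([Z_i,X],X_i)=2\sum_i [Z_i,\kappa(X,X_i)]
\]
and performs a term-by-term grading analysis using only (2) and regularity: for each choice of $X\in\mathfrak{l}_{-1}$ or $\mathfrak{l}_{-2}$ and $X_i\in\mathfrak{l}_{-1}$ or $\mathfrak{l}_{-2}$ it records in which $\mathfrak{l}_j$ the contribution lands, and then reads off torsion-freeness and the two residual equations from that table. Your route via the harmonic-curvature machinery is cleaner and dispatches in one stroke precisely the point you flag as delicate (the homogeneity-two torsion component $\kappa(\mathfrak{l}_{-1},\mathfrak{l}_{-2})_{\mathfrak{l}_{-1}}$), at the price of importing the general structure theory; the paper's approach is more self-contained but its key sentence (``since the homogeneous components of the torsion vanish'') is terse enough that it may well be the same theorem under a different name. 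Either argument leads to the same simplified normality conditions.
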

\begin{proof}
1) The curvature depends only on the class in $\mathfrak{l}/\mathfrak{p}$ and because $\alpha\circ \alpha^{-1}=id_{\mathfrak{l}/\mathfrak{p}}$ the claim 1) holds.

2) The claim is consequence of a direct computation with $Ad(i(h))$-action on formula in 1).

3) From 2) we get that $\mathfrak{l}_{-2}$ part of the curvature depends only on $\mathfrak{l}_{-1}$, then regularity is equivalent to the condition that the $\mathfrak{l}_{-2}$ part of 
\[ [\alpha(\alpha^{-1}(X)),\alpha(\alpha^{-1}(Y))]-\alpha([\alpha^{-1}(X),\alpha^{-1}(Y)])=0 \]
for $X,Y\in \mathfrak{l}_{-1}$. But for $X,Y\in \mathfrak{l}_{-1}$, the $\mathfrak{l}_{-2}$ part of $[\alpha(\alpha^{-1}(X)),\alpha(\alpha^{-1}(Y))]$ is $[X,Y]$ and the part of $\alpha([\alpha^{-1}(X),\alpha^{-1}(Y)])$ is multiple of the projection to the center of $[\alpha^{-1}(X),\alpha^{-1}(Y)]$, i.e. the image of the center is given by the bracket.

4) The normality conditions can be written, in terms of $X_i$ basis of $\mathfrak{l}/\mathfrak{p}$ and $Z_i$ dual basis of $\mathfrak{p}_+$ and $X\in \mathfrak{l}/\mathfrak{p}$, as 
\[\sum_i \kappa([Z_i,X],X_i)=2\sum_i [Z_i,\kappa(X,X_i)])].\]
So for $X_i\in \mathfrak{l}_{-2}$ we obtain $[Z_i,X]\in \mathfrak{p}$ and $\kappa([Z_i,X],X_i)=0$. For $X_i\in \mathfrak{l}_{-1}$ and for $X \in \mathfrak{l}_{-1}$ we get $\kappa([Z_i,X],X_i)=0$. So $\sum_i \kappa([Z_i,X],X_i)\in \mathfrak{l}_{0}+\mathfrak{l}_{-2}$. For $X_i\in \mathfrak{l}_{-2}$ we obtain $[Z_i,\kappa(X,X_i)]\in \mathfrak{l}_{1}$. For $X_i\in \mathfrak{l}_{-1}$ and $X\in \mathfrak{l}_{-2}$ we get $[Z_i,\kappa(X,X_i)]\in \mathfrak{l}_{0}+\mathfrak{l}_{2}$. For $X_i\in \mathfrak{l}_{-1}$ and $X\in \mathfrak{l}_{-1}$ we get $[Z_i,\kappa(X,X_i)]\in \mathfrak{l}_{1}$. Since the homogeneous components of the torsion vanish, so does the whole torsion. Then normality conditions looks like as in the proposition.
\end{proof}

{}From the normality we get torsion-freeness, thus most of normal symmetric parabolic contact geometries are immediately locally flat.

\begin{prop}
Extensions to regular normal parabolic contact geometries with involutive smooth systems of symmetries and non-trivial curvature have $\mathfrak{l}$ equal to $\mathfrak{sl}(n,\mathbb{R})$, $\mathfrak{su}(p,q)$ or $\mathfrak{sp}(2n,\mathbb{R})$.
\end{prop}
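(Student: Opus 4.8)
The plan is to deduce the statement from the torsion-freeness established in Lemma \ref{norcur}(4) together with the general principle, recalled in the introduction, that the full curvature $\kappa$ of a normal parabolic geometry is reconstructed from its harmonic curvature $\kappa_H$. First I would recall that for a contact parabolic structure the harmonic curvature splits into a torsion part $t$, valued in $(\bigwedge^2\mathfrak{l}_{-1}^*)\otimes \mathfrak{l}_{-1}$, and a curvature part $c$, valued in $(\bigwedge^2\mathfrak{l}_{-1}^*)\otimes \mathfrak{l}_{0}$, exactly the two components recorded in the last column of the table in the Example above.

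Next I would run the reduction. By Lemma \ref{norcur}(4) the extension to a regular normal parabolic geometry is torsion free, so the full torsion $T=\Pi\circ\kappa$ vanishes; in particular its harmonic component $t$, being a piece of $T$, must vanish. Hence the only harmonic component that can survive is $c$. Since by normality $\kappa$ is determined by $\kappa_H=t+c$, if $c$ vanished as well we would obtain $\kappa_H=0$ and therefore $\kappa=0$, contradicting the hypothesis of non-trivial curvature. Consequently $c\neq 0$, so the underlying type must be one whose harmonic curvature admits a $c$-component.

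Finally I would read this off the classification table: inspecting the $\kappa_H$ column, the only simple Lie algebras $\mathfrak{l}$ carrying a contact $2$-grading whose harmonic curvature contains a $c$ entry are $\mathfrak{sl}(n+2,\mathbb{R})$, $\mathfrak{su}(p+1,q+1)$ and $\mathfrak{sp}(2n+2,\mathbb{R})$; every remaining entry carries only $t$ and is therefore forced to be flat under torsion-freeness. This gives exactly the three families named in the statement (the indices in the table differing only by the conventional shift). The contact geometries of dimension $3$, which are excluded from this table and are treated separately later, still fall under the admissible algebras, as their types are again among $A$ and $C$.

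The steps that need the most care are the two structural facts used in the middle paragraph: that torsion-freeness of the whole geometry genuinely annihilates the harmonic torsion $t$, which is immediate since $t$ is a homogeneous component of $T$, and that normality really forces $\kappa=0$ once $\kappa_H=0$. Both are standard consequences of the harmonic-curvature theory for normal parabolic geometries recalled in the introduction, so the argument reduces to a bookkeeping inspection of the table rather than to any new computation.
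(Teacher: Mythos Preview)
Your argument is correct and follows exactly the paper's approach: the paper's proof is the one-line observation that, given torsion-freeness from Lemma~\ref{norcur}(4), inspection of the $\kappa_H$ column of the table shows that only the three listed algebras admit a non-torsion harmonic curvature component. You have simply unpacked the logical steps behind that sentence---torsion-freeness kills $t$, normality forces $\kappa=0$ once $\kappa_H=0$, hence $c$ must be present---and noted the harmless index shift and the dimension~$3$ case.
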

\begin{proof}
Looking in the table of parabolic contact geometries, the geometries having harmonic curvature of non-torsion type are those in the proposition.
\end{proof}

So we are interested in construction of such geometries. The construction starting with simple pseudo-hermitian or para-pseudo-hermitian symmetric space, which is not complex, is summarized in the following proposition.

\begin{prop}\label{pp29}
Let $G/H$ be simple pseudo-hermitian or para-pseudo-hermitian symmetric space of dimension $2n$, which is not complex. Let $h\in K \subset H$ be the subgroup of dimension $dim(H)-1$, whose Lie algebra contains semisimple part of $\mathfrak{h}$. Let $i:K\to L_0$ be an injective homomorphism. Assume that the adjoint representations of $K$ on $\mathfrak{m}$ and $i(K)$ on $\mathfrak{l}_{-1}$ are isomorphic. Let $\alpha$ have the following components:

1) $i'$ on $\mathfrak{k}$ with values in $\mathfrak{l}_{0}$

2) induced by the isomorphism of adjoint representations on $\mathfrak{m}$ with values in $\mathfrak{l}_{-1}$ and induced by some morphism of adjoint representations on $\mathfrak{m}$ with values in $\mathfrak{l}_{1}$

3) induced by the bracket on $\mathfrak{h/k}$ with values in $\mathfrak{l}_{-2}$, while $\alpha$ is arbitrary on $\mathfrak{h/k}$ with values in $\mathfrak{l}_{2}$ or in the centralizer of $i'(\mathfrak{k})$ in $\mathfrak{l}_{0}$

Then $(\alpha,i)$ is an extension to regular parabolic contact geometry of type $(L,P)$ of dimension $2n+1$ with invariant smooth system of symmetries and all extensions $\alpha$ (for fixed $i$) are of this form.
\end{prop}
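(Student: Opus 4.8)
The plan is to read the statement as two tasks: first, that the prescribed data $(\alpha,i)$ genuinely define an extension in the sense of the extension theorem, that this extension is regular, and that it carries the involutive system of symmetries; and second, that conversely every admissible $\alpha$ with the fixed $i$ has exactly this shape. Throughout I use the $Ad(K)$-invariant splitting $\mathfrak{g}=\mathfrak{k}\oplus\mathfrak{h/k}\oplus\mathfrak{m}$, in which $\mathfrak{h/k}$ is the one-dimensional center of $\mathfrak{h}$ (by Theorem \ref{them3}(4), together with the hypothesis that $\mathfrak{k}$ contains the semisimple part of $\mathfrak{h}$) and hence a trivial $K$-module, alongside the contact grading $\mathfrak{l}=\mathfrak{l}_{-2}\oplus\dots\oplus\mathfrak{l}_2$.

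First I would verify conditions (i)--(iii) of the extension theorem for the constructed $\alpha$. Condition (ii) holds by definition, since the $\mathfrak{k}$-component is $i'$. For (iii) the components of $\alpha$ landing in $\mathfrak{p}=\mathfrak{l}_0\oplus\mathfrak{l}_1\oplus\mathfrak{l}_2$ drop out of the induced map $\mathfrak{g}/\mathfrak{k}=\mathfrak{h/k}\oplus\mathfrak{m}\to\mathfrak{l}/\mathfrak{p}=\mathfrak{l}_{-2}\oplus\mathfrak{l}_{-1}$, which therefore equals $(\mathfrak{m}\to\mathfrak{l}_{-1})\oplus(\mathfrak{h/k}\to\mathfrak{l}_{-2})$; the first map is the assumed isomorphism of adjoint representations, and the second is nonzero because under that isomorphism the nondegenerate bracket form $\mathfrak{m}\times\mathfrak{m}\to\mathfrak{h/k}$ of the symmetric space corresponds to the nondegenerate Levi bracket $\mathfrak{l}_{-1}\times\mathfrak{l}_{-1}\to\mathfrak{l}_{-2}$, so it is an isomorphism of one-dimensional spaces. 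For (i) I check $Ad(K)$-equivariance summand by summand: on $\mathfrak{k}$ the map $i'$ intertwines the adjoint actions because $i$ is a homomorphism; on $\mathfrak{m}$ both the $\mathfrak{l}_{-1}$- and $\mathfrak{l}_1$-components are prescribed as morphisms of adjoint representations; and on the trivial module $\mathfrak{h/k}$ equivariance only demands that the image lie among $K$-fixed vectors, which the one-dimensional spaces $\mathfrak{l}_{\pm 2}$ are, while in $\mathfrak{l}_0$ the fixed vectors form precisely the centralizer of $i'(\mathfrak{k})$ --- exactly the space into which the $\mathfrak{l}_0$-part is allowed to map.

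Regularity and the symmetry are then immediate from the earlier results. By Lemma \ref{norcur}(3) the extension is regular exactly when the $\mathfrak{l}_{-2}$-part of $\alpha$ is the one induced by the bracket on $\mathfrak{l}_{-1}$, which is how the $\mathfrak{l}_{-2}$-component on $\mathfrak{h/k}$ was defined; and by Theorem \ref{them3}(1) the involutive smooth system of symmetries is present, since $h\in K$ acts as $-\operatorname{id}$ on $\mathfrak{m}$, so $i(h)\in L_0$ acts as $-\operatorname{id}$ on $\mathfrak{l}_{-1}$ and hence as $(-1)^i$ on each $\mathfrak{l}_i$. For the converse I take an arbitrary extension $(\alpha,i)$ with the fixed $i$ and invoke Theorem \ref{them3}: it forces $\alpha(\mathfrak{m})\subset\mathfrak{l}_{-1}\oplus\mathfrak{l}_1$, $\alpha|_{\mathfrak{k}}=i'$ into $\mathfrak{l}_0$, and $\alpha(\mathfrak{h})\subset\mathfrak{l}_{-2}\oplus\mathfrak{l}_0\oplus\mathfrak{l}_2$, whence $\alpha(\mathfrak{h/k})\subset\mathfrak{l}_{-2}\oplus\mathfrak{l}_0\oplus\mathfrak{l}_2$. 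Equivariance again makes the $\mathfrak{m}$-components $K$-morphisms, with the $\mathfrak{l}_{-1}$-part an isomorphism by (iii); triviality of $\mathfrak{h/k}$ confines its $\mathfrak{l}_0$-image to the centralizer of $i'(\mathfrak{k})$; and Lemma \ref{norcur}(3) pins its $\mathfrak{l}_{-2}$-image to the bracket, leaving the $\mathfrak{l}_2$-image free. This matches the asserted form exactly.

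The hard part will be the representation-theoretic bookkeeping inside condition (i): one must identify which $K$-isotypic pieces of $\mathfrak{l}_{-1}$, $\mathfrak{l}_1$ and $\mathfrak{l}_0$ can receive the images of $\mathfrak{m}$ and $\mathfrak{h/k}$, and in particular confirm via Schur's lemma that the $K$-fixed vectors in $\mathfrak{l}_0$ are exactly the centralizer of $i'(\mathfrak{k})$. In the para-pseudo-hermitian case $\mathfrak{m}=W\oplus W^*$ is reducible, so I would additionally have to track how the two summands pair across $\mathfrak{l}_{-1}$ and $\mathfrak{l}_1$ using the classification tables, rather than appealing to irreducibility directly.
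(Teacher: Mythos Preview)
Your proposal is correct and follows essentially the same approach as the paper: verify the extension conditions (i)--(iii) componentwise on the $Ad(K)$-invariant splitting $\mathfrak{g}=\mathfrak{k}\oplus\mathfrak{h/k}\oplus\mathfrak{m}$, obtain regularity from condition~3) via Lemma~\ref{norcur}(3), and for the converse use the structural constraints of Theorem~\ref{them3} together with $Ad(K)$-equivariance. Your outline is considerably more detailed than the paper's two-sentence proof, and the representation-theoretic bookkeeping you flag as the ``hard part'' is not actually required at this level of generality --- that analysis only enters in the explicit case-by-case computations in the later sections, not in the proof of this proposition itself.
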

\begin{proof}
Since the decomposition $\mathfrak{g}=\mathfrak{k}+\mathfrak{h/k}+\mathfrak{m}$ is $Ad(K)$-invariant the $\alpha$ is well-defined. The conditions (ii) and (iii) from definition for $\alpha$ to be extension hold by definition of $\alpha$; (i) holds, because the adjoint representations are identified by $\alpha$. Defining $\alpha$ in another way breaks some of the conditions (i)-(iii). The regularity comes from 3).
\end{proof}

The complex case is more complicated, since the semisimple part of $H$ has dimension $dim(H)-2$. Thus one has to choose one dimension subgroup of center of $H$ to get $K$ and regularity impose one more condition on possible morphism of adjoint representations.

We are interested in all possible extensions that are regular, normal and non-isomorphic. The regularity follows from previous proposition. By the general theory (Theorem 3.1.14. in \cite{odk6}), there is always a normal Cartan connection enjoying the same automorphisms as the given parabolic geometry. Thus, without loss of generality, we shall restrict our attention to extensions $\alpha$ leading directly to normal geometries. The automorphisms of the Cartan connection can be computed from proposition \ref{lab_1} and there are the following morphisms in the class of all possible extensions:

\begin{lemma}
All morphisms of bundles $G\times_i P$ between extended geometries from previous proposition are generated by those of the following form:

M1) $(g,p)\mapsto (g,Ap)$, where $A\in P$ is such, that $Ak=kA$ for $k\in i(K)$. Then the pullback of $\omega_\alpha$ is $\omega_{Ad(A^{-1})\circ \alpha}$

M2) $(g,p)\mapsto (gB,p)$ where $B$ is in center of $H$. Then the pullback of $\omega_\alpha$ is $\omega_{\alpha \circ Ad(B^{-1})}$

M3) $(g,p)\mapsto (\phi(g),p)$, where $\phi$ is Lie group automorphism of $G$ such, that $\phi(k)=k$ for $k\in K$. Then the pullback of $\omega_\alpha$ is $\omega_{\alpha \circ T\phi}$
\end{lemma}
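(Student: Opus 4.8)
The plan is to lift a putative morphism to the product $G\times P$, read off the constraints forced by $P$- and $K$-equivariance, turn the compatibility with the Cartan connections into a single graded identity, and then peel off the three elementary factors one at a time using the rigidity of the underlying homogeneous structure. First I would record the shape of any morphism. Let $\iota\colon G\to G\times_i P$, $\iota(g)=[g,e]$, be the canonical inclusion; it satisfies $\iota(gk)=\iota(g)\cdot i(k)$ and $\iota^*\omega_\alpha=\alpha\circ\omega_G$, where $\omega_G$ is the Maurer--Cartan form of $G$. A bundle morphism $\Phi$ of $G\times_i P$ is $P$-equivariant, hence lifts to a map of $G\times P$ equivariant both for the right $P$-action and for the right $K$-action $(g,p)\cdot k=(gk,i(k)^{-1}p)$; $P$-equivariance forces the lift to be $(g,p)\mapsto(\psi(g),\gamma(g)p)$, and $K$-equivariance gives $\psi(gk)=\psi(g)k$ and $\gamma(gk)=i(k)^{-1}\gamma(g)i(k)$. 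Pulling back the requirement $\Phi^*\omega_\alpha=\omega_{\alpha'}$ along $\iota$ (using $\Phi\circ\iota(g)=\iota(\psi(g))\cdot\gamma(g)$ and $P$-equivariance of $\omega_\alpha$) yields the master identity
\[ \alpha'\circ\omega_G=Ad(\gamma^{-1})\circ\alpha\circ(\psi^*\omega_G)+\gamma^*\omega_P, \]
where $\omega_P$ is the Maurer--Cartan form of $P$. Everything else is an analysis of this identity in the contact grading of $\mathfrak l$.

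Next I would normalise $\psi$. Projecting the master identity onto $\mathfrak l/\mathfrak p=\mathfrak l_{-1}+\mathfrak l_{-2}$ kills the $\mathfrak p$-valued term $\gamma^*\omega_P$ and shows that the base map of $\Phi$ is an automorphism of the reflexion space $G/K$; since such a map preserves the leaves $F_e$ of $T^+M$ (Lemma \ref{leaf}, Lemma \ref{lemnabvl}), it descends to an automorphism of the underlying symmetric space $G/H$. Left translations $L_{g_0}$ on $G\times_i P$ preserve every $\omega_\alpha$, the Maurer--Cartan form being left invariant, so by transitivity I may compose with one and assume the base map fixes $eK$. By the rigidity of the homogeneous model (Proposition \ref{authomod}) and the equivalence of definitions II) and III) of symmetric spaces, the base-point-preserving part then arises from a Lie group automorphism $\phi$ of $G$ with $\phi|_K=\mathrm{id}$, i.e.\ an M3, while the one-dimensional leaf direction $\mathfrak{h/k}$ is matched by a right translation $R_B$ with $B\in Z(H)$, i.e.\ an M2 (here $\dim H/K=1$ and $\mathfrak{h/k}$ is central, by Theorem \ref{them3}). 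Absorbing $\phi$ and $B$, I reduce to the case $\psi=\mathrm{id}$.

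It then remains to solve for $\gamma$. With $\psi=\mathrm{id}$ the master identity reads $\alpha'\circ\omega_G=Ad(\gamma^{-1})\circ\alpha\circ\omega_G+\gamma^*\omega_P$. Evaluating degree by degree and inserting the explicit components from Proposition \ref{pp29} ($i'$ on $\mathfrak k$, the fixed isomorphism of adjoint representations on $\mathfrak m$, and the bracket on $\mathfrak{h/k}$), the negative part forces $Ad(\gamma^{-1})$ to act trivially on $\alpha(\mathfrak g)$ modulo $\mathfrak p$ and identifies $\alpha'=Ad(A^{-1})\alpha$, while the non-negative part forces $\gamma^*\omega_P=0$; hence $\gamma$ is locally constant, so constant $\equiv A$ on the connected group $G$, and $\gamma(gk)=i(k)^{-1}\gamma(g)i(k)$ becomes $A=i(k)^{-1}Ai(k)$, i.e.\ $A$ commutes with $i(K)$. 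This residual factor is exactly an M1. The three pullback formulas are then checked directly from $\iota^*\omega_\alpha=\alpha\circ\omega_G$ together with $R_A^*\omega_\alpha=Ad(A^{-1})\omega_\alpha$, $R_B^*\omega_G=Ad(B^{-1})\omega_G$ and $\phi^*\omega_G=T\phi\circ\omega_G$.

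The main obstacle is precisely this last normalisation: ruling out any non-constant $\gamma$ and any base automorphism outside the $\phi$-and-$B$ family, so that M1, M2, M3 (together with the left $G$-action, which fixes the parameter $\alpha$) genuinely exhaust all morphisms. The leverage is the rigidity of Cartan geometries, a morphism being determined by its value at a single point (\cite[1.5.11]{odk6}), combined with the tightly constrained normal form of $\alpha$ dictated by the semisimplicity of $\mathfrak l$ and the contact grading (Theorem \ref{them3}); the bookkeeping across the pieces $\mathfrak l_{\pm2}$, $\mathfrak l_{\pm1}$, $\mathfrak l_{0}$, organised by the $Ad(i(h))$-eigenspace decomposition, is what removes all remaining freedom.
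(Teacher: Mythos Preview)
Your approach is more elaborate than the paper's and takes a genuinely different route, but it has a real gap. The paper's proof is brief and direct: it observes that any morphism $\Phi$ of the extended geometries is determined by $\Phi(e,e)=(B,A)$ together with $T_{(e,e)}\Phi$, since the Cartan connection $\omega_\alpha$ at an arbitrary point is recovered from its value at $(e,e)$ via the left $G$-action and right $P$-action; it then concludes that the only choices of these data acting nontrivially on the parameter $\alpha$ are those of type M1, M2, M3. There is no appeal to the reflexion-space structure at all.

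Your strategy of lifting to $G\times P$, deriving the master identity, and peeling off M3, M2, M1 in turn is attractive, but the step where you claim that the base map is an automorphism of the reflexion space $G/K$ does not follow from the projected master identity. Projecting onto $\mathfrak l/\mathfrak p$ shows only that the base map preserves the contact distribution $T^{-1}M=T^-M$, because $\overline{Ad(\gamma^{-1})}$ preserves the filtration $\mathfrak l^{-1}/\mathfrak p\subset\mathfrak l/\mathfrak p$. It does \emph{not} show that the base map preserves the transverse direction $T^+M$: whenever $\gamma$ has a nontrivial $\exp(\mathfrak l_1)$-part, $\overline{Ad(\gamma^{-1})}$ sends $\mathfrak l_{-2}$ into $\mathfrak l_{-2}+\mathfrak l_{-1}$ modulo $\mathfrak p$, so the $\mathfrak h/\mathfrak k$-direction can be sheared into $\mathfrak m$. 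Hence the base map need not descend to the symmetric space $G/H$, and the identification of its base-point-preserving part with an M3 composed with an M2 is unjustified. Your citation of Proposition~\ref{authomod} is also off target: that statement concerns automorphisms of the flat model $L/P$, not of $G/K$ or $G/H$.

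The one-point rigidity you invoke in your final paragraph is precisely the paper's tool, and it suffices on its own: once $\Phi$ is pinned down by its value and derivative at $(e,e)$, the question becomes a finite-dimensional one about which such data are compatible with the constrained shape of $\alpha,\alpha'$ from Proposition~\ref{pp29}. Your global normalisation detour through the reflexion space is not needed, and closing its gap would in effect reproduce the pointwise argument.
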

\begin{proof}
Since $A$, $B$ and $\phi$ commute with elements of $K$ and $i(K)$, the morphisms are well defined. Then it is easy to compute the pullbacks.

Let $\Phi: G\times_i P\to G\times_i P$ be morphism of bundles such, that $(\Phi)^*\omega_\alpha=\omega_{\alpha'}$ and $\Phi(e,e)=(B,A)$. Then $(\Phi)^*(\alpha \circ \omega)(X)(e,e)=\alpha \circ \omega (T\Phi (X))(B,A)=Ad(A^{-1})\circ \alpha \circ \omega(Tr^{A^{-1}} \circ Tl^{B^{-1}}\circ T\Phi (X))(e,e)=\alpha' \circ \omega(X)(e,e)$. Since $\omega_\alpha(X)(g',k')=Ad(k'^{-1})\circ \alpha \circ \omega(Tr^{k'^{-1}} \circ Tl^{g'^{-1}}(X))(e,e)$, the $\Phi$ is uniquely determined by $\Phi(e,e)$ and $T_{(e,e)}\Phi$. Thus only possible choices with nontrivial action are obtained from M1), M2) and M3).
\end{proof}

In the rest of the paper, we will describe all non flat examples with simple group generated by symmetries. We investigate following questions:

\bf 1) existence of extension and how all possible extensions look like? \rm

We investigate, which non-complex simple pseudo-hermitian or para-pseudo-hermitian symmetric spaces satisfies the conditions of previous proposition. In particular, we find $K$ and $i$ as in proposition \ref{pp29}.

\bf 2) which extensions are normal? \rm

We compute normality conditions from lemma \ref{norcur} and solve them using Maple.

\bf 3) which extensions are the same, i.e. differ by an automorphism? \rm

We follow the proposition \ref{lab_1} to determine the infinitesimal automorphisms. We know that each element in the image of $\alpha$ induces an infinitesimal automorphism. In the non flat case, we compute using Maple, that there are no other ones.

\bf 4) which extensions are equivalent and determine the equivalence classes? \rm

The only possible morphisms are M1), M2) and M3) from previous lemma. We use Maple to compute the action of them on $\omega$ and choose suitable representants of equivalence classes of extensions.

\section{Extensions to parabolic contact structures of dimension $3$}

We treat the dimension $3$ separately, because on both sides of parabolic contact geometries and symmetric spaces exceptional phenomena arise.

There are only two types of simple symmetric spaces of dimension two to start with, i.e. $\mathfrak{so}(3)/\mathfrak{so}(2)$ and $\mathfrak{so}(2,1)/\mathfrak{so}(1,1)$. Thus $K$ is discrete in this situation, i.e. $K\cong \mathbb{Z}_2$ consists only of the symmetry $h$.

The parabolic contact structures of dimension $3$ we are interested in, are those having $\mathfrak{l}$ one of $\mathfrak{sl}(3,\mathbb{R})$, $\mathfrak{su}(2,1)$ and $\mathfrak{sp}(4,\mathbb{R})$.

\begin{lemma}
For any choice of $\mathfrak{l}$ and symmetric space $\mathfrak{so}(3)/\mathfrak{so}(2)$ or $\mathfrak{so}(2,1)/\mathfrak{so}(1,1)$ there is $i$ satisfying assumptions of proposition \ref{pp29}. If there is no normal subgroup of $L$ in $P$, then the $i$ is unique up to equivalence.
\end{lemma}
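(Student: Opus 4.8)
The plan is to reduce the whole statement to the single element $i(h)$. Since $K\cong\mathbb{Z}_2=\{e,h\}$ is discrete, a homomorphism $i\colon K\to L_0$ is exactly a choice of an involution $i(h)\in L_0$, and Proposition \ref{pp30} is of no help here because $K$ is finite rather than semisimple. The representation hypothesis of Proposition \ref{pp29} translates directly: on $\mathfrak{m}$ (two-dimensional) the element $h$ acts by $-\operatorname{id}$, so isomorphism of the two $\mathbb{Z}_2$-modules forces $\operatorname{Ad}(i(h))|_{\mathfrak{l}_{-1}}=-\operatorname{id}_{\mathfrak{l}_{-1}}$. By Theorem \ref{them3} (1), applied to the simple symmetric space $\mathfrak{so}(3)/\mathfrak{so}(2)$ or $\mathfrak{so}(2,1)/\mathfrak{so}(1,1)$, this propagates along the contact brackets to the statement that $\operatorname{Ad}(i(h))$ acts as $(-1)^{j}$ on each $\mathfrak{l}_{j}$; in other words every admissible $i(h)$ must realise the grading involution $\theta\in\operatorname{Aut}(\mathfrak{l})$.

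For existence I would exhibit $i(h)$ explicitly as $\theta$ itself, which is the automorphism $X_{-\lambda}\mapsto(-1)^{\operatorname{ht}(\lambda)}X_{-\lambda}$ already employed in the construction of the homogeneous models, where it was seen to be given by an element of $K\cap L_0=H$. Concretely, for $\mathfrak{l}=\mathfrak{sl}(3,\mathbb{R})$ the contact grading element is $E=\operatorname{diag}(1,0,-1)$ and $\exp(\pi\sqrt{-1}\,E)=\operatorname{diag}(-1,1,-1)\in SL(3,\mathbb{R})$ is the required involution lying in $L_0$; the analogous exponentials of the contact grading elements produce involutions in $L_0$ for $\mathfrak{su}(2,1)$ and $\mathfrak{sp}(4,\mathbb{R})$. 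In every case $i(h)\neq e$ (it acts by $-\operatorname{id}$ on $\mathfrak{l}_{-1}\neq0$), so $i$ is injective, and the two two-dimensional $\mathbb{Z}_2$-representations $\mathfrak{m}$ and $\mathfrak{l}_{-1}$ are both $-\operatorname{id}$, hence isomorphic. Thus the hypotheses of Proposition \ref{pp29} are met and $(\alpha,i)$ extends to a regular parabolic contact geometry.

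For uniqueness I would take two admissible homomorphisms $i,i'$. By the first paragraph $\operatorname{Ad}(i(h))=\operatorname{Ad}(i'(h))=\theta$, so $i(h)^{-1}i'(h)\in\ker(\operatorname{Ad}\colon L\to\operatorname{Aut}(\mathfrak{l}))=Z(L)$. Since $\mathfrak{l}$ is simple, any normal subgroup of $L$ contained in the proper parabolic $P$ is central, and because $Z(L)\subset L_0\subset P$ the largest such subgroup is exactly $Z(L)$. The hypothesis that there is no normal subgroup of $L$ in $P$ therefore forces $Z(L)=\{e\}$, whence $\operatorname{Ad}$ is injective and $i(h)=i'(h)$, that is $i=i'$; this gives uniqueness, a fortiori up to the equivalences M1)--M3). (Without the hypothesis one only gets uniqueness modulo $Z(L)$, which for $L=Sp(4,\mathbb{R})$ genuinely allows the second choice $-i(h)$.)

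The main obstacle will be the explicit existence step in the non-split case $\mathfrak{su}(2,1)$: one must verify that the exponential of the restricted grading element is a genuine real involution sitting inside $L_0$ rather than merely an element of the complexification, and that it induces $-\operatorname{id}$ on the two-dimensional negative part $\mathfrak{l}_{-1}$ of the $BC_1$-graded real form. Once the three grading involutions are pinned down, the only remaining point is the identification of ``no normal subgroup of $L$ in $P$'' with triviality of $Z(L)$, which is routine since the centre lies in every parabolic.
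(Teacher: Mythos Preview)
Your proposal is correct and for existence follows the same line as the paper: the paper simply writes down the matrices $\operatorname{diag}(-1,1,-1)$ and $\operatorname{diag}(-1,1,1,-1)$ for $i(h)$ and observes that any linear isomorphism then intertwines the two $\mathbb{Z}_2$-representations, which is exactly your explicit grading involution $\exp(\pi i E)$.

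Where you differ is in the uniqueness argument. The paper essentially asserts it in one sentence (``it suffices to give the image of $h$, which will be unique if there is no normal subgroup of $L$ in $P$''), whereas you actually supply the mechanism: the representation condition forces $\operatorname{Ad}(i(h))|_{\mathfrak{l}_{-1}}=-\operatorname{id}$, the bracket structure of the contact grading then pins down $\operatorname{Ad}(i(h))$ completely as the grading involution, and the remaining ambiguity is exactly $Z(L)$, which is the largest normal subgroup of $L$ inside $P$. This is the argument the paper has in mind but does not spell out, and your version makes clear why the hypothesis is needed (and why $Sp(4,\mathbb{R})$ with its nontrivial centre is the case where it bites). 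One small remark: invoking Theorem~\ref{them3}(1) is slightly circuitous, since that theorem presupposes an extension $(\alpha,i)$; it is cleaner to argue directly that an involution in $L_0$ acting by $-\operatorname{id}$ on $\mathfrak{l}_{-1}$ is forced to act by $(-1)^j$ on $\mathfrak{l}_j$ via $[\mathfrak{l}_{-1},\mathfrak{l}_{-1}]=\mathfrak{l}_{-2}$, Killing duality, and faithfulness of $\mathfrak{l}_0$ on $\mathfrak{l}_{-1}$. Your worry about $\mathfrak{su}(2,1)$ in the last paragraph is unfounded: $\operatorname{diag}(-1,1,-1)$ is visibly a real matrix in $SU(2,1)\cap L_0$.
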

\begin{proof}
To define $i: K \to L_0$ for the extensions, it suffices to give the image of $h$, which will be unique if there is no normal subgroup of $L$ in $P$. We map $h$ to element 
\[ \left( \begin{array}{ccc}
-1 & 0 & 0 \\
0 & 1 & 0 \\
0 & 0 & -1 \end{array} \right)\]
 in $L_0$ for $\mathfrak{sl}(3,\mathbb{R})$, $\mathfrak{su}(2,1)$ and map $h$ to element 
\[ \left( \begin{array}{cccc}
-1 & 0 & 0 & 0 \\
0 & 1 & 0& 0 \\
0 & 0 & 1 & 0\\
0& 0& 0& -1 \end{array} \right)\]
for $\mathfrak{sp}(4,\mathbb{R})$.

Any linear isomorphism is isomorphism of representations $K$ and $i(K)$, thus $i$ satisfies assumptions of proposition \ref{pp29}.

If there is no normal subgroup of $L$ in $P$, then the $i$ is unique up to equivalence.
\end{proof}

Then following the proposition \ref{pp29} we can construct $\alpha$ as follows. First, we write $(e,x_1,x_2)$ for the following matrices
\[ \left( \begin{array}{ccc}
0 & e & -x_1  \\
-c\cdot e & 0 & -c\cdot x_2  \\
x_1 & x_2 & 0    \end{array} \right)\]
in $\mathfrak{so}(2+c,1-c)$. Further, $b_1, b_2, b_3, b_4, a_1, a_2, c_1, d_1, d_2, d_3, d_4$ are real numbers such, that $b_1b_4-b_2b_3\neq 0$.

For $\mathfrak{l}=\mathfrak{sl}(3,\mathbb{R})$, the proposition \ref{pp29} implies
\[ \alpha(e,x_1,x_2)=
\left( \begin{array}{ccc}
a_1e & d_1x_1+d_2x_2 & c_1e \\
b_1x_1+b_2x_2 & a_2e  & d_3x_1+d_4x_2 \\
(b_1b_4-b_2b_3)e & b_3x_1+b_4x_2 & -(a_1+a_2)e \end{array} \right).\] 

Similarly, in the case $\mathfrak{l}=\mathfrak{su}(2,1)$ 
\[\alpha(e,x_1,x_2)=\]
\[
\left( \begin{array}{ccc}
a_1e+a_2ei & * & c_1ei \\
b_1x_1+b_2x_2+(b_3x_1+b_4x_2)i & -2a_2ei  & d_1x_1+d_2x_2+(d_3x_1+d_4x_2)i \\
2(b_1b_4-b_2b_3)ei & * & -a_1e+a_2ei \end{array} \right),\] 
where $*$ means that, the entry is determined by the structure of $\mathfrak{su}(2,1)$.

For $\mathfrak{l}=\mathfrak{sp}(4,\mathbb{R})$,
\begin{multline*}
\alpha(e,x_1,x_2)=\\
\left( \begin{array}{cccc}
a_1e & d_1x_1+d_2x_2& d_3x_1+d_4x_2 & c_1e \\
b_1x_1+b_2x_2 & a_2e & a3e & d_3x_1+d_4x_2 \\
b_3x_1+b_4x_2 & a4e &-a_2e  & -d_1x_1-d_2x_2 \\
2(b_1b_4-b_2b_3)e & b_3x_1+b_4x_2& -b_1x_1-b_2x_2 & -a_1e \end{array} \right).
\end{multline*}

We skip computations of normality conditions and automorphisms, which can be easily done due to the dimension. But we look on equivalence classes of extensions in detail. Following the general strategy, we shall employ the morphism of types M1), M2) and M3) to construct suitable canonical forms of the morphisms $\alpha$, and thus we shall classify all equivalence classes of $\alpha$ for fixed $i$.

In $\mathfrak{l}=\mathfrak{sl}(3,\mathbb{R})$ case we can use morphisms of type M1) to get
$$(b_1b_4-b_2b_3)'=\frac{(b_1b_4-b_2b_3)}{n_2^2n_3},\ b_1'=\frac{b_1n_3}{n_2},\ b_2'=\frac{b_2n_3}{n_2},\ b_3'=\frac{b_3}{n_3^2n_2},\ b_4'=\frac{b_4}{n_3^2n_2},$$
so one can choose $b_1b_4-b_2b_3=1$ and one of $b_1, b_2, b_3, b_4=1$.

In the $c=1$ case we can use morphisms of type M2) to get
\[b_1'=b_1\cdot cos(n_1)-b_2\cdot sin(n_1),\]
\[b_2'=b_1\cdot sin(n_1)+b_2\cdot cos(n_1),\]
\[b_3'=b_3\cdot cos(n_1)-b_4\cdot sin(n_1),\]
\[ b_4'=b_3\cdot sin(n_1)+b_4\cdot cos(n_1),\]
so we can choose $b_2=0$, and then $t:=b_3'=\frac{b_1b_3+b_2b_4}{b_1b_4-b_2b_3}$. Finally, using morphisms of type M3) and M1) we can change $b_3'=-b_3$.

In the $c=-1$ case we can use morphisms of type M2) to get
\[b_1'=b_1\cdot cosh(n_1)-b_2\cdot sinh(n_1),\]
\[ b_2'=-b_1\cdot sinh(n_1)+b_2\cdot cosh(n_1)\]
\[b_3'=b_3\cdot cosh(n_1)-b_4\cdot sinh(n_1),\]
\[ b_4'=-b_3\cdot sinh(n_1)+b_4\cdot cosh(n_1).\]

Since we can use morphisms of type M2) to exchange $b_1, b_3$ with $b_2, b_4$, we can choose $b_1^2\geq b_2^2$. If $b_1^2>b_2^2$, then we can choose $b_2=0$, and then $t:=b_3'=\frac{-b_1b_3+b_2b_4}{b_1b_4-b_2b_3}$. If $b_1^2=b_2^2$, then we can choose $b_3^2\leq b_4^2$, if $b_3^2<b_4^2$, then we can choose $b_1=1, b_3=0$, if $b_3^2=b_4^2$, then we can choose $b_1=1, b_2=1, b_3=-1, b_4=1$. Again we can get $b_3'=-b_3$, if we use morphisms of type M3) and M1).

\begin{theorem}
Up to equivalences, all regular normal extensions for  $\mathfrak{so}(3)/\mathfrak{so}(2)$ to $\mathfrak{sl}(3,\mathbb{R})$ are given by the following one parameter classes with $t\geq 0$:
\[\alpha(e,x_1,x_2)=
\left( \begin{array}{ccc}
\frac{t}{4}e & -\frac{3t^2+4}{4}x_1+\frac{t}{4}x_2 & -\frac{15t^2+16}{16}e \\
x_1 & -\frac{t}{2}e  & -\frac{3t}{4}x_1-x_2 \\
e & tx_1+x_2 & \frac{t}{4}e \end{array} \right)\] 
with curvature
\[ \kappa((e,x_1,x_2),(h,y_1,y_2))=\]
\[
\left( \begin{array}{ccc}
0 & \frac{3(t^3+t)}{2}(hx_1-ey_1) & 0 \\
0 & 0  & -\frac{3t^2}{2}(hx_1-ey_1)-\frac{3t}{2}(hx_2-ey_2) \\
0 & 0 & 0 \end{array} \right).\]

Up to equivalences, all regular normal extensions for $\mathfrak{so}(2,1)/\mathfrak{so}(1,1)$ to $\mathfrak{sl}(3,\mathbb{R})$ are given by the following one parameter classes:

a) for $b_1^2>b_2^2$, there is one parameter class for $t\geq 0$
\[\alpha(e,x_1,x_2)=
\left( \begin{array}{ccc}
-\frac{t}{4}e & \frac{3t^2-4}{4}x_1-\frac{t}{4}x_2 & \frac{16-15t^2}{16}e \\
x_1 & \frac{t}{2}e  & \frac{3t}{4}x_1+x_2 \\
e & tx_1+x_2 & -\frac{t}{4}e \end{array} \right)\] 
with curvature
\[ \kappa((e,x_1,x_2),(h,y_1,y_2))=\]
\[
\left( \begin{array}{ccc}
0 & \frac{3(t^3-t)}{2}(hx_1-ey_1) & 0 \\
0 & 0  & -\frac{3t^2}{2}(hx_1-ey_1)-\frac{3t}{2}(hx_2-ey_2) \\
0 & 0 & 0 \end{array} \right);\]

b) for $b_1^2=b_2^2$ and $b_3^2<b_4^2$
\[\alpha(e,x_1,x_2)=
\left( \begin{array}{ccc}
\frac14e & -x_1-\frac34x_2 & \frac{1}{16}e \\
x_1+x_2 & -\frac12e  & \frac14x_1+\frac14x_2 \\
e & x_2 & \frac14e \end{array} \right)\] 
with curvature
\[ \kappa((e,x_1,x_2),(h,y_1,y_2))=\]
\[
\left( \begin{array}{ccc}
0 & \frac32(hx_1-ey_1)+\frac32(hx_2-ey_2) & 0 \\
0 & 0  & 0 \\
0 & 0 & 0 \end{array} \right);\]

c) for $b_1^2=b_2^2$ and $b_3^2=b_4^2$
\[\alpha(e,x_1,x_2)=
\left( \begin{array}{ccc}
\frac14e & -\frac18x_1+\frac18x_2 & \frac{1}{32}e \\
x_1+x_2 & -\frac12e  & \frac18x_1+\frac18x_2 \\
2e & -x_1+x_2 & \frac14e \end{array} \right)\] 
with is flat.
\end{theorem}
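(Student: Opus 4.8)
The plan is to combine the canonical-form reduction of the block $(b_1,\dots,b_4)$ carried out just above with the normality conditions of Lemma \ref{norcur}(4), and then to read off the curvature from the extension formula of Lemma \ref{norcur}(1). I would organise the argument in three stages: fix a normalised representative of each equivalence class, solve normality for the remaining entries of $\alpha$, and finally compute $\kappa$ and confirm that distinct parameters give inequivalent geometries.

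First I would record that, by the reductions preceding the theorem, every equivalence class of regular extensions is represented by an $\alpha$ of the displayed $\mathfrak{sl}(3,\mathbb{R})$-form in which the $\mathfrak{l}_{-1}\to\mathfrak{l}_{-1}$ block has been brought to a canonical shape. For $\mathfrak{so}(3)/\mathfrak{so}(2)$ (the $c=1$ case) this is $b_1=1$, $b_2=0$, $b_3=t$, $b_4=1$ with $t\geq 0$, the sign being fixed by morphisms of type M3) and M1); for $\mathfrak{so}(2,1)/\mathfrak{so}(1,1)$ (the $c=-1$ case) the noncompact normaliser forces the trichotomy $b_1^2>b_2^2$, then $b_1^2=b_2^2$ with $b_3^2<b_4^2$, then $b_1^2=b_2^2$ with $b_3^2=b_4^2$, producing cases a), b), c). Regularity is automatic throughout, since the $\mathfrak{l}_{-2}$-entry is $(b_1b_4-b_2b_3)e$, which is exactly the image of the bracket on $\mathfrak{l}_{-1}$ demanded by Lemma \ref{norcur}(3) and by the construction of Proposition \ref{pp29}.

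The central computation is to impose the two normality conditions $\sum_i[Z_i,\kappa(X^{-1},X_i)]=0$ and $\sum_i\kappa([Z_i,X^{-2}],X_i)=0$ of Lemma \ref{norcur}(4) on such a normalised $\alpha$. Since $K$ is discrete in dimension three, the only equivariance constraint on $\alpha$ is under $Ad(h)$, which is already encoded in the displayed block form; once the $b_i$ are fixed, the remaining unknowns are therefore exactly $a_1,a_2,c_1,d_1,d_2,d_3,d_4$. Using the torsion-freeness already guaranteed by Lemma \ref{norcur}(4), the two conditions reduce to a small algebraic system in these seven quantities; solving it (directly, or with Maple as elsewhere in the paper) expresses each as the explicit rational function of $t$ recorded in the displayed matrices, for instance $a_1=t/4$, $d_1=-(3t^2+4)/4$ and $c_1=-(15t^2+16)/16$ in the compact case. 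Because of the low dimension this elimination is routine rather than delicate.

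Finally I would compute the curvature from $\kappa_\alpha=[\alpha,\alpha]-\alpha([\,\cdot\,,\cdot\,])$ — the base model $G\to G/K$ carrying the flat Maurer--Cartan connection — evaluated on $\mathfrak{l}_{-1}+\mathfrak{l}_{-2}$ as in Lemma \ref{norcur}(1)--(2). Torsion-freeness removes the negative-degree components and leaves precisely the $\mathfrak{l}_1$-valued expression displayed, with leading coefficient $\tfrac{3(t^3+t)}{2}$ in the compact case and $\tfrac{3(t^3-t)}{2}$ in case a) of the split case. Inequivalence of distinct values of $t$ then follows because the morphisms M1)--M3) have been fully exhausted in reaching the canonical form, so two normalised $\alpha$ lying in the same orbit must coincide; the vanishing of $\kappa$ in case c) identifies that class as the flat model, and the explicit curvature provides an independent confirmation. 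I expect the main obstacle to be not any single calculation but verifying that the normalisation of $(b_1,\dots,b_4)$ is genuinely exhaustive — in particular that the residual sign and scaling freedoms surviving M1) and M2) are exactly those eliminated by M3) — so that each listed $t\geq 0$, and each of the cases a)--c), labels a single equivalence class.
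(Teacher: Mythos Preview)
Your proposal is correct and follows essentially the same route as the paper: the paper's argument for this theorem consists precisely of the discussion immediately preceding it (the general $\alpha$-ansatz from Proposition~\ref{pp29}, the explicit M1)--M3) normalisation of the $b$-block) together with the normality and automorphism computations that the paper explicitly says it skips ``due to the dimension''. One small point worth flagging: by analogy with the higher-dimensional computations later in the paper (e.g.\ the $O(p+2,q)$ example, where normality yields six conditions on seven unknowns and leaves $c_1$ free), you should expect the normality system not to pin down all seven of $a_1,a_2,c_1,d_1,\dots,d_4$ uniquely but to leave one free parameter, which is then absorbed by the residual M1) freedom coming from unipotent elements of $P$; your final paragraph already anticipates exactly this kind of residual-freedom check.
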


In $\mathfrak{l}=\mathfrak{su}(2,1)$ case we can use morphisms of type M3) to get $b_1b_4-b_2b_3>0$ and of type M1) to get
\[ (b_1b_4-b_2b_3)'=2(b_1b_4-b_2b_3)(cosh(n_2)+sinh(n_2))^2,\]
so we can choose $b_1b_4-b_2b_3=1$. The actions of M1) and M2) are quite complicated, so we won't state them explicitly, but using morphisms of type M1), M2) and M3) we can get $b_2'=b_3'=0, b_1'=t,b_4'=\frac{1}{t}$, where $$t:=\sqrt{\frac{s+c\sqrt{s^2-4c}}{2c}},\ s=\frac{cb_1^2+b_2^2+cb_3^2+b_4^2}{b_1b_4-b_2b_3}.$$

\begin{theorem}
Up to equivalences, all regular normal extensions for $\mathfrak{so}(3)/\mathfrak{so}(2)$ to $\mathfrak{su}(2,1)$ are given by the following one parameter classes for $s\geq 2$:
\[\alpha(e,x_1,x_2)=
\left( \begin{array}{ccc}
\frac{1+t^4}{8t^2}ie & * & \frac{-(15t^8-34t^4+15)}{128t^4}ie \\
t x_1+\frac{i}{t}x_2 & -\frac{1+t^4}{4t^2}ie  & \frac{-3t^4+5}{16t}x_1+\frac{5t^4-3}{16t^3}ix_2 \\
2ie & * & \frac{1+t^4}{8t^2}ie \end{array} \right),\] 
where $*$ means that, the entry is determined by the structure of $\mathfrak{su}(2,1)$, with curvature 
\[ \kappa((e,x_1,x_2),(h,y_1,y_2))=\]
\[
\left( \begin{array}{ccc}
0 & * & 0 \\
0 & 0  & \frac{3(1-t^8)}{16t^5}(hx_2-ey_2)+\frac{3(1-t^8)}{16t^3}i(hx_1-ey_1)\\
0 & 0 & 0 \end{array} \right),\]
where $*$ means that, the entry is determined by the structure of $\mathfrak{su}(2,1)$.

Up to equivalences, all regular normal extensions for $\mathfrak{so}(2,1)/\mathfrak{so}(1,1)$ to $\mathfrak{su}(2,1)$ are given by the following one parameter classes for $s>-2$:
\[\alpha(e,x_1,x_2)=
\left( \begin{array}{ccc}
\frac{1-t^4}{8t^2}ie & * & \frac{-(15t^8+34t^4+15)}{128t^4}ie \\
t x_1+\frac{i}{t}x_2 & -\frac{1-t^4}{4t^2}ie  & \frac{3t^4+5}{16t}x_1+\frac{-5t^4-3}{16t^3}ix_2 \\
2ie & * & \frac{1-t^4}{8t^2}ie \end{array} \right),\] 
where $*$ means that, the entry is determined by the structure of $\mathfrak{su}(2,1)$, with curvature 
\[ \kappa((e,x_1,x_2),(h,y_1,y_2))=\]
\[
\left( \begin{array}{ccc}
0 & * & 0 \\
0 & 0  & \frac{3(1-t^8)}{16t^5}(hx_2-ey_2)+\frac{3(1-t^8)}{16t^3}i(hx_1-ey_1)\\
0 & 0 & 0 \end{array} \right),\]
where $*$ means that, the entry is determined by the structure of $\mathfrak{su}(2,1)$.
\end{theorem}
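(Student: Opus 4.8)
The plan is to run the four-step strategy set out above, specialized to the target algebra $\mathfrak{l}=\mathfrak{su}(2,1)$ and the two two-dimensional bases $\mathfrak{so}(3)/\mathfrak{so}(2)$ (the parameter $c=1$) and $\mathfrak{so}(2,1)/\mathfrak{so}(1,1)$ ($c=-1$). The existence and general shape of the extension are already in hand: by Proposition \ref{pp29} every admissible $\alpha$ is the displayed $\mathfrak{su}(2,1)$-valued matrix depending on the real parameters $a_1,a_2,c_1,d_1,\dots,d_4,b_1,\dots,b_4$ with $b_1b_4-b_2b_3\neq0$, the starred entries being forced by the Hermitian form. The real content is therefore to cut this family down first by normality and then by equivalence.

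First I would impose the normality conditions of Lemma \ref{norcur}(4). Writing $\sum_i[Z_i,\kappa(X^{-1},X_i)]=0$ and $\sum_i\kappa([Z_i,X^{-2}],X_i)=0$ in the dual bases of $\mathfrak{l}_{-1}+\mathfrak{l}_{-2}$ and $\mathfrak{l}_1+\mathfrak{l}_2$, and using that by the same lemma the torsion already vanishes automatically, turns these into algebraic equations in the parameters. Solving them (with Maple, as announced) expresses $a_1,a_2,c_1$ and $d_1,\dots,d_4$ as explicit functions of $b_1,\dots,b_4$, so that only the $\mathfrak{l}_0$- and $\mathfrak{l}_1$-valued pieces of $\kappa$ can survive. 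Next I would reduce the remaining block $(b_1,b_2,b_3,b_4)$ to canonical form using the morphisms M1), M2), M3): as recorded just before the statement, M3) and M1) normalize $b_1b_4-b_2b_3=1$, and the combined action of M1), M2), M3) brings the block to $b_2=b_3=0$, $b_1=t$, $b_4=t^{-1}$, the single surviving invariant being $s=\frac{cb_1^2+b_2^2+cb_3^2+b_4^2}{b_1b_4-b_2b_3}$, which on the canonical form reads $s=ct^2+t^{-2}$.

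Substituting this canonical block into the normality-solved entries reproduces exactly the two matrices in the statement, and feeding the canonical $\alpha$ into the formula $\kappa(X,Y)=[\alpha(\alpha^{-1}X),\alpha(\alpha^{-1}Y)]-\alpha([\alpha^{-1}X,\alpha^{-1}Y])$ of Lemma \ref{norcur}(1) yields the displayed curvatures, whose dependence on $t$ through the coefficient $\frac{3(1-t^8)}{16t^5}$ confirms non-flatness for $t\neq1$.

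The remaining point, and the main obstacle, is pinning down the exact ranges $s\geq2$ and $s>-2$ together with the claim that distinct admissible $s$ give genuinely inequivalent geometries. For $c=1$ the quantity $\frac{b_1^2+b_2^2+b_3^2+b_4^2}{b_1b_4-b_2b_3}$ is bounded below by $2$ by an AM--GM estimate, with equality exactly at the flat point $t=1$; for $c=-1$ the analogous expression is indefinite, and one must analyze the orbits of the hyperbolic actions of M1), M2) on $(b_1,\dots,b_4)$ to see that the diagonal normal form $b_2=b_3=0$ with $b_1b_4=1>0$ is reachable precisely when $s>-2$, the boundary orbits being the analogues of the separate cases that appeared for $\mathfrak{sl}(3,\mathbb{R})$. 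Since $s$ is invariant under all of M1), M2), M3) and the assignment $s\mapsto t\mapsto\alpha$ is injective on each range, $s$ is a complete invariant and the listed one-parameter families are exactly the equivalence classes; that M1), M2), M3) exhaust the available identifications is guaranteed by the morphism lemma, while the infinitesimal-automorphism computation via Proposition \ref{lab_1} confirms that in the non-flat case the symmetry algebra is just the image of $\alpha$.
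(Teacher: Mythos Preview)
Your proposal follows the paper's own strategy essentially verbatim: the paper, too, sets up the general $\alpha$ via Proposition~\ref{pp29}, imposes the normality conditions of Lemma~\ref{norcur} (solved symbolically), and then uses the morphisms M1), M2), M3) to bring $(b_1,b_2,b_3,b_4)$ to the diagonal form $b_2=b_3=0$, $b_1=t$, $b_4=t^{-1}$ with the single invariant $s=\frac{cb_1^2+b_2^2+cb_3^2+b_4^2}{b_1b_4-b_2b_3}$, exactly as you describe.

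One caution on your treatment of the $c=-1$ range: your sentence ``the diagonal normal form \dots\ is reachable precisely when $s>-2$, the boundary orbits being the analogues of the separate cases that appeared for $\mathfrak{sl}(3,\mathbb{R})$'' does not match what actually happens here. On the diagonal form one has $s=-t^2+t^{-2}$, which already sweeps out all of $\mathbb{R}$ as $t>0$ varies, so the restriction cannot be coming from a failure to diagonalize; and indeed the paper lists no non-diagonal sub-cases for $\mathfrak{su}(2,1)$, in contrast to the $\mathfrak{sl}(3,\mathbb{R})$ theorem. The bound $s>-2$ must therefore be accounted for by a further identification among the diagonal representatives under M1)--M3) (the paper only says ``the actions of M1) and M2) are quite complicated''), not by missing orbit types. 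Apart from this localized misreading of where the constraint originates, your argument is the paper's argument.
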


The $\mathfrak{l}=\mathfrak{sp}(4,\mathbb{R})$ case is flat and all $\alpha$ are equivalent.

\begin{theorem}
Up to equivalence, there is the unique regular normal extension for $\mathfrak{so}(2,1)/\mathfrak{so}(1,1)$ to $\mathfrak{sp}(4,\mathbb{R})$ with

\[\alpha(e,x_1,x_2)=
\left( \begin{array}{cccc}
0 & -\frac14x_1& \frac14x_2 & \frac1/8e \\
x_1 & 0 & -\frac12e & \frac14x_2 \\
x_2 & -\frac12e &0  & \frac14x_1 \\
2e & x_2& -x_1 & 0 \end{array} \right),\] 
which is flat.

Up to equivalence, there is the unique regular normal extension for $\mathfrak{so}(3)/\mathfrak{so}(2)$ to $\mathfrak{sp}(4,\mathbb{R})$ with
\[\alpha(e,x_1,x_2)=
\left( \begin{array}{cccc}
0 & -\frac14x_1& -\frac14x_2 & -\frac18e \\
x_1 & 0 & \frac12e & -\frac14x_2 \\
x_2 & -\frac12e &0  & \frac14x_1 \\
2e & x_2& -x_1 & 0 \end{array} \right),\] 
which is flat.
\end{theorem}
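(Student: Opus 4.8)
The plan is to run the same four-step analysis (existence of $\alpha$, normality, identification of equivalent extensions via M1)--M3), and flatness) that was carried out for $\mathfrak{sl}(3,\mathbb{R})$ and $\mathfrak{su}(2,1)$, the essential new feature being that for $\mathfrak{sp}(4,\mathbb{R})$ the semisimple part of $\mathfrak{l}_0$ is $\mathfrak{sl}(2,\mathbb{R})$ rather than trivial. First I would note that, by the preceding lemma, a homomorphism $i:K\to L_0$ exists and, since $\mathfrak{sp}(4,\mathbb{R})$ is simple so that $L$ admits no normal subgroup inside $P$, it is unique up to equivalence; thus we may fix $i$ and vary only $\alpha$. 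By proposition \ref{pp29} (with $\mathfrak{k}=0$, since $K\cong\mathbb{Z}_2$ is discrete) the general extension is the displayed $\mathfrak{sp}(4,\mathbb{R})$-valued $\alpha$ depending on the parameters $a_1,\dots,a_4,c_1,b_1,\dots,b_4,d_1,\dots,d_4$ with $b_1b_4-b_2b_3\neq 0$, its $\mathfrak{l}_{-2}$-component being pinned by the bracket to guarantee regularity.

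The core of the argument is the normality step, which I expect to be the main obstacle. I would impose the two conditions of lemma \ref{norcur}(4); torsion-freeness holds automatically, so what remains is to solve the resulting polynomial system in the parameters (as before, with computer algebra). The decisive phenomenon, absent for $\mathfrak{sl}(3,\mathbb{R})$ and $\mathfrak{su}(2,1)$, is that here this system admits no non-flat solution: the $c$-type harmonic curvature is now valued in a nontrivial $\mathfrak{sl}(2,\mathbb{R})$-module coming from the semisimple part of $\mathfrak{l}_0$, and the structure constants of this $\mathfrak{sl}(2,\mathbb{R})$ couple the parameters $a_2,a_3,a_4$ to the $b_i$ and $d_i$ so tightly that the harmonic component is forced to vanish. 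Since $\mathfrak{sp}(4,\mathbb{R})$ carries only $c$-type harmonic curvature and the geometry is already torsion-free, vanishing of $\kappa_H$ forces $\kappa_\alpha$ itself to vanish; hence every regular normal extension is flat. Verifying that the system really has only the flat solution is the computational heart of the proof.

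It then remains to collapse the flat normal extensions into a single equivalence class for each of the two symmetric spaces. I would apply the morphisms of types M1), M2) and M3) exactly as in the $\mathfrak{sl}(3,\mathbb{R})$ computation: M1) rescales and conjugates the $b_i$ and $d_i$, M2) (translation by the one-dimensional center of $H$) rotates the pairs $(b_1,b_2),(b_3,b_4)$ for $\mathfrak{so}(3)/\mathfrak{so}(2)$ and boosts them for $\mathfrak{so}(2,1)/\mathfrak{so}(1,1)$, and M3) provides the residual sign normalizations. Because normality has already eliminated every free parameter, this reduction terminates at the single canonical $\alpha$ recorded in the statement in each case. Finally I would substitute that canonical $\alpha$ into the curvature formula of lemma \ref{norcur}(1) and check directly that $\kappa_\alpha=0$, confirming flatness and completing the proof.
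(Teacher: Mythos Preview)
Your four-step plan is exactly what the paper does; indeed the paper says only that ``the $\mathfrak{l}=\mathfrak{sp}(4,\mathbb{R})$ case is flat and all $\alpha$ are equivalent'' and displays the canonical $\alpha$, having already announced that the normality and automorphism computations in dimension three are routine and therefore skipped.

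One imprecision worth correcting: normality does \emph{not} eliminate the $b_i$. After imposing the conditions of Lemma~\ref{norcur}(4) the parameters $b_1,\dots,b_4$ (which encode the isomorphism $\mathfrak{m}\to\mathfrak{l}_{-1}$) remain free, exactly as in the $\mathfrak{sl}(3,\mathbb{R})$ and $\mathfrak{su}(2,1)$ cases, and it is the reduction by M1)--M3) that identifies them. The reason this reduction terminates in a single class here, whereas it left a residual invariant $t$ for $\mathfrak{sl}(3,\mathbb{R})$, is that the centraliser of $i(K)$ in $L_0$ is larger: for $\mathfrak{sp}(4,\mathbb{R})$ the whole of $L_0$, which contains a copy of $Sp(2,\mathbb{R})\cong SL(2,\mathbb{R})$, is available for conjugation under M1), not merely a two-dimensional torus. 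Your heuristic that the semisimple part of $\mathfrak{l}_0$ ``couples the parameters so tightly that $\kappa_H$ is forced to vanish'' should likewise be treated with caution: in the higher-dimensional contact projective extensions from $O(p+2,q)/O(p,q)$ treated later in the paper the analogous normal extension is \emph{not} flat, so the flatness here is a low-dimensional phenomenon rather than a structural feature of the $\mathfrak{sp}$ series. None of this undermines your plan, since the argument is ultimately the direct computation you describe.
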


\section{Extensions to Lagrangean contact structures}

In this section we construct examples of Lagrangean contact structures with involutive smooth system of symmetries. We want to find extension to Cartan geometry of type $(\mathfrak{sl}(n+2,\mathbb{R}),P)$ with the following gradation, where the blocks are $(1,n,1)$:

\[ \left( \begin{array}{ccc}
\mathfrak{l}_{0} & \mathfrak{l}_{1} & \mathfrak{l}_{2} \\
\mathfrak{l}_{-1} & \mathfrak{l}_{0} & \mathfrak{l}_{1} \\
\mathfrak{l}_{-2} & \mathfrak{l}_{-1} & \mathfrak{l}_{0} \end{array} \right)\]

The representation of the semisimple part of $\mathfrak{l}_{0}$ on $\mathfrak{l}_{-1}$ is $V\oplus V^*$, where $V$ is standard representation of $\mathfrak{sl}(n,\mathbb{R})$ and $V^*$ is its dual.

Firstly we look on Lagrangean contact structures for simple symmetric spaces.

\begin{prop}\label{odw1}
The only non-complex simple symmetric spaces allowing extensions to Lagrangean contact structures are simple para-pseudo-hermitian symmetric space and the pseudo-hermitian symmetric spaces $\mathfrak{so}(p+2,q)/ \mathfrak{so}(p,q)+\mathfrak{so}(2)$. For the latter cases, the infinitesimal inclusion $i'$ from proposition \ref{pp29} is unique up to equivalence, and if there is no normal subgroup of $L$ in $P$, then the $i$ is unique up to equivalence.
\end{prop}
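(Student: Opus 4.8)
The plan is to translate the existence question into the matching condition of Proposition~\ref{pp29}: an extension of a non-complex simple symmetric space $G/H$ to the Lagrangean contact structure exists exactly when the semisimple part of $\mathfrak h$ admits an injective homomorphism $i$ into the semisimple part $\mathfrak{sl}(n,\mathbb R)$ of $\mathfrak l_0$ (with $2n=\dim\mathfrak m$) under which the adjoint action on $\mathfrak m$ becomes isomorphic to the action on $\mathfrak l_{-1}=V\oplus V^*$. By Theorem~\ref{them3} we already know $\dim H/K=1$, so $\mathfrak k$ is the semisimple part of $\mathfrak h$, and the comparison is one of $\mathfrak k$-representations. The structural feature I would exploit throughout is that $V$ is the standard representation of $\mathfrak{sl}(n,\mathbb R)$, hence absolutely irreducible and of real type; in particular $\mathfrak l_{-1}=V\oplus V^*$ is, for \emph{every} choice of $i$, reducible as a real representation of $i(K)$, containing $V$ as a proper invariant subspace. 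Thus the matching is possible only if $\mathfrak m$ likewise decomposes over $\mathbb R$ as a real representation together with its dual.

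For the para-pseudo-hermitian spaces this is immediate from the first table of the Example, where $\mathfrak m=W\oplus W^*$ with $W$ a faithful real representation of $\mathfrak k$ of real dimension $n=\tfrac12\dim\mathfrak m$. Since $W$ is a representation of a semisimple Lie algebra its image is automatically traceless, so $W$ defines an embedding into $\mathfrak{sl}(n,\mathbb R)\subset\mathfrak l_0$; taking $V=W$ matches the representations, and the remaining freedom of $\alpha$ on the one-dimensional $\mathfrak h/\mathfrak k$ and into $\mathfrak l_{\pm2}$ is furnished by Proposition~\ref{pp29}. Hence every para-pseudo-hermitian space admits an extension.

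The heart of the argument is the pseudo-hermitian case, organized by the Frobenius--Schur type $\mathbb R$, $\mathbb C$, $\mathbb H$ recorded in the second table, and here lies the main obstacle: one must exclude a matching for \emph{all} admissible $i$, not merely the obvious candidate. I resolve this uniformly by the reducibility observation above. When the type is $\mathbb C$ or $\mathbb H$, the representation of $\mathfrak k$ on $\mathfrak m$ is $W$ regarded as a real representation, and this is real-irreducible --- in the $\mathbb C$ case because $\overline W\cong W^*\not\cong W$, in the $\mathbb H$ case because $W$ is quaternionic. A real-irreducible representation cannot be isomorphic to the reducible $V\oplus V^*$, so no admissible $i$ exists. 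When the type is $\mathbb R$ the representation on $\mathfrak m$ is the complexification of a real $W=\lambda_1$, so $\mathfrak m\cong W\oplus W$ over $\mathbb R$; inspecting the second table this occurs only for $\mathfrak{so}(p+2,q)/\mathfrak{so}(p,q)+\mathfrak{so}(2)$, where $W$ is the standard orthogonal representation and hence self-dual, giving $\mathfrak m\cong W\oplus W^*\cong V\oplus V^*$ and the embedding $V=W$. This proves the list in the statement is exhaustive.

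It remains to settle uniqueness for the surviving family $\mathfrak{so}(p+2,q)$. Here $\mathfrak k=\mathfrak{so}(p,q)$ and $V|_{i(K)}$ is its standard representation, which is irreducible; Proposition~\ref{pp30} applied with $P=SL(n,\mathbb R)$, $n=p+q$, then shows that any two embeddings inducing this irreducible representation are conjugate, so $i'$ is unique up to equivalence. Finally, passing from $i'$ to the group homomorphism $i$ amounts to prescribing the image of the single symmetry $h\in K$, which by Theorem~\ref{them3}(1) must lie in $L_0$ and act as $-\operatorname{id}$ on $\mathfrak l_{-1}$; exactly as in the three-dimensional Lemma, the assumption that $L$ has no normal subgroup contained in $P$ removes the residual central ambiguity and determines $i$ uniquely up to equivalence.
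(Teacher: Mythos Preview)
Your argument is correct and follows essentially the same route as the paper: match the $\mathfrak k$-representation on $\mathfrak m$ with $V\oplus V^*$ via Proposition~\ref{pp29}, use the real/complex/quaternionic type of $W$ to rule out all pseudo-hermitian cases except $\mathfrak{so}(p+2,q)$, and invoke Proposition~\ref{pp30} for uniqueness. Your irreducibility argument for types $\mathbb C$ and $\mathbb H$ is in fact more explicit than the paper's one-line dismissal.

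The one substantive difference is in the scope of the uniqueness claim. You treat only the $\mathfrak{so}(p+2,q)$ family, where $W\cong W^*$ and Proposition~\ref{pp30} settles matters directly. The paper's proof establishes uniqueness for \emph{all} the listed spaces, including the para-pseudo-hermitian ones where $W\not\cong W^*$: there Proposition~\ref{pp30} leaves two candidate embeddings, via $W$ and via $W^*$, and the paper disposes of this residual ambiguity by exhibiting the explicit bundle morphism $(g,p)\mapsto((g^{-1})^T,p)$ from $G\times_W P$ to $G\times_{W^*}P$, which carries the extension $(W,\alpha)$ to $(W^*,-\alpha^T)$. Whether this extra step is required depends on how one reads ``the latter cases'' in the statement; the paper evidently intends it to cover both families (and this is used in the subsequent Theorem on para-pseudo-hermitian spaces with $W\neq W^*$), so you should add this morphism to complete the picture.
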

\begin{proof}
Let $G/H$ be a non-complex simple symmetric space and let $K$ be the simisimple part of $H$ extended by the symmetry $h$. Since in the para-pseudo-hermitian case, the $\mathfrak{k}$ has representation $W\oplus W^*$ for some irreducible representation $W: \mathfrak{k}\to \mathfrak{sl}(n,\mathbb{R})$, we define $i$ by $W$. Then $K$ and $i(K)$ are isomorphic, because $(V\oplus V^*)\circ W=V\circ W\oplus V^*\circ W=W\oplus W^*$. For the pseudo-hermitian symmetric spaces the same is possible only in the case of type $\mathbb{R}$ and $W^*\cong \bar{W}$.

Since semisimple part of $L_0$ is simple, we can use proposition \ref{pp30} and we see that $i$ is $W$ or $W^*$, up to equivalence. Then we define morphism $G\times_WP\to G\times_{W^*}P$ as $(g,p)\mapsto ((g^{-1})^T,p)$, which maps extension $(W,\alpha)$ to $(W^*,-\alpha^T)$, and the claim follows from proposition \ref{pp29}.
\end{proof}

Now we explicitly compute one flat example.

\begin{example}
Extension from $(PGl(n+1,\mathbb{R}),Gl(n,\mathbb{R}))$ to $(PGl(n+2,\mathbb{R}),P)$:

The subgroup $Gl(n,\mathbb{R})$ is represented by the following matrices, where the blocks are $(1,n)$ and $B\in Gl(n,\mathbb{R})$

\[ \left( \begin{array}{cc}
1 & 0  \\
0 & B  \end{array} \right).\] 

The symmetry at $o$ is a left multiplication by the following matrix in $Gl(n,\mathbb{R})$, where $E$ is the identity matrix

\[ \left( \begin{array}{cc}
1 & 0  \\
0 & -E  \end{array} \right).\] 

$K$ is the following subgroup, where $A\in Sl(n,\mathbb{R})$

\[ \left( \begin{array}{cc}
1 & 0  \\
0 & \pm A  \end{array} \right).\] 

Now $i$ is the following injective homomorphism, which maps $K$ into $P$

\[ \left( \begin{array}{ccc}
1 & 0 & 0 \\
0 & \pm A & 0 \\
0 & 0 & 1 \end{array} \right).\] 

Since both adjoint representations are $\lambda_1 \oplus \lambda_{n-1}$, the only possible homomorphisms are nonzero multiples. Thus the only possible $\alpha$ are the following, where $a=-Tr(A)$ and $b_1,b_2\in \mathbb{R}$ are nonzero and $c_1,c_2,d_1,d_2,e_1\in \mathbb{R}$

\[ \left( \begin{array}{cc}
a & Y^T \\ 
X & A   \end{array} \right) 
\mapsto 
\left( \begin{array}{ccc}
c_1a & d_1Y^T & e_1a \\ 
b_1X & A+\frac{c2}{n}Ea & d_2X \\
b_1b_2a & b_2Y^T & (1-c_1-c_2)a   \end{array} \right). \]

For fixed $b_1,b_2$ the normality conditions are equivalent to $c_2=0$, $e_1=d_1d_2$, $(n+2)b_1d_1+nd_2b_2-2c_1=n$ and $nb_1d_1-2c_1+(n+2)b_2d_2=n+2$. Thus there are four conditions on five variables and the solution is $d_1=\frac{c_1}{b_1}, d_2=-\frac{c_1-1}{b_2}, c_2=0,e_1=-\frac{c_1-1}{b_2}\frac{c_1}{b_1}$ and $c_1$ free parameter. Thus we can choose $c_1=\frac12$ and then the $\alpha$ extending to normal geometry for fixed $b_1,b_2$ is

\[ \left( \begin{array}{cc}
a & Y^T \\ 
X & A   \end{array} \right) 
\mapsto 
\left( \begin{array}{ccc}
\frac{1}{2}a & \frac{1}{2b_1}Y^T & \frac{1}{4b_1b_2}a \\ 
b_1X & A & \frac{1}{2b_2}X \\
b_1b_2a & b_2Y^T & \frac{1}{2}a   \end{array}\right).  \]

Further $\kappa_{\alpha}(X,Y)=0$ for any of these $\alpha$. So they are all equivalent and locally isomorphic to homogeneous model. We can summarize the results in the following proposition.

\begin{prop}
Up to equivalence, there is the unique regular normal extension from $(PGl(n+1,\mathbb{R}),K)$ to Lagrangean contact geometry, which is flat.
\end{prop}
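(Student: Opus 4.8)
First I would use Proposition \ref{pp29} to pin down the shape of every candidate extension. The underlying symmetric space is the simple para-pseudo-hermitian space $PGl(n+1,\mathbb{R})/Gl(n,\mathbb{R})$ with $\mathfrak{h}=\mathfrak{sl}(n,\mathbb{R})+\mathbb{R}$, and $K$ is its semisimple part $\mathfrak{sl}(n,\mathbb{R})$ extended by the symmetry $h$; the isotropy representation on $\mathfrak{m}$ is $\lambda_1\oplus\lambda_{n-1}$. Since the representation of the semisimple part of $\mathfrak{l}_0$ on $\mathfrak{l}_{-1}$ is again $\lambda_1\oplus\lambda_{n-1}$, Proposition \ref{pp29} applies; using Proposition \ref{pp30} to fix $i$ up to conjugacy, it forces $\alpha$ into the explicit block form displayed in the Example, the only freedom being the nonzero reals $b_1,b_2$ and the constants $c_1,c_2,d_1,d_2,e_1$.

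Next I would impose the normality conditions of Lemma \ref{norcur}(4). Evaluated in the chosen basis of $\mathfrak{l}/\mathfrak{p}$, they become the algebraic system $c_2=0$, $e_1=d_1d_2$, together with the two linear relations $(n+2)b_1d_1+nb_2d_2-2c_1=n$ and $nb_1d_1+(n+2)b_2d_2-2c_1=n+2$. Treating $b_1,b_2$ as fixed nonzero parameters, this is four conditions on five unknowns, so a one-parameter family of normal extensions survives; normalizing the remaining freedom to $c_1=\tfrac12$ selects the explicit $\alpha$ recorded in the Example.

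The decisive step is the curvature computation, which I expect to be the main obstacle. From the extension formula $\kappa_\alpha=\alpha\circ\kappa+[\alpha,\alpha]-\alpha([\cdot,\cdot])$ and the fact that the base $G/K$ carries the flat homogeneous Cartan geometry, one must verify that the algebraic term $[\alpha,\alpha]-\alpha([\cdot,\cdot])$ vanishes identically on the normalized family. This is genuinely more than Lemma \ref{norcur}(4) provides: that lemma only guarantees torsion-freeness, i.e. the vanishing of the harmonic component of type $t$, whereas flatness additionally requires the curvature component of type $c$ to vanish. For the Lagrangean contact type both harmonic components are a priori present, so the full vanishing $\kappa_\alpha=0$ must be checked directly; this is exactly the computation performed in the Example.

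Finally, flatness yields the uniqueness claim. Since $\kappa_\alpha\equiv0$, every such normal extension is locally isomorphic to the flat homogeneous model of type $(PGl(n+2,\mathbb{R}),P)$, and connectedness together with the Liouville theorem (Proposition \ref{authomod}) identifies all of them up to equivalence. Equivalently, one may invoke the morphisms M1)--M3) to transport the parameters $(b_1,b_2)$ to a single canonical value, confirming that the whole family collapses to one equivalence class.
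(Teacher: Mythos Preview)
Your proposal is correct and follows the same route as the paper's argument: pin down the form of $\alpha$ via Proposition~\ref{pp29}, solve the normality system of Lemma~\ref{norcur}(4) to isolate the one free parameter $c_1$, verify $\kappa_\alpha\equiv 0$ by direct computation, and conclude equivalence from flatness (or, as you also note, by using the morphisms M1)--M3) to normalize $(b_1,b_2)$). The paper's treatment is terser---it simply asserts $\kappa_\alpha=0$ and that all the resulting extensions are equivalent---but the steps and their order match yours exactly.
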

\end{example}

In the case $W$ and $W^*$ are not isomorphic as the representations of $\mathfrak{k}$, then by the Schur lemma only the multiples of identity are isomorphisms. After identification of the representations of $\mathfrak{k}$ and $i(K)$ via $W$, we are in situation of the previous example. Since the symmetric space has now different curvature $R(X,Y)$, and $\kappa_{\alpha}(X,Y)=[\alpha(X),\alpha(Y)]-\alpha(R(X,Y))$, the resulting contact geometry will not be flat. But using morphism M1) we get that again they are all isomorphic. Thus we get the following theorem.

\begin{theorem}
Up to equivalence, there is an unique regular normal extension for any non-complex simple para-pseudo-hermitian symmetric space with $W\neq W^*$ to Lagrangean contact structure. The extended geometry is flat only in the case of the previous example.
\end{theorem}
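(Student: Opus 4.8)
The plan is to reduce the assertion to the explicit computation of the previous example, the extra input being the hypothesis $W\neq W^*$ together with Schur's lemma. First I would fix the injective homomorphism $i\colon K\to L_0$ built from $W$, as in Proposition~\ref{odw1}; since the semisimple part of $\mathfrak{l}_0$ acts on $\mathfrak{l}_{-1}$ by $V\oplus V^*$ while $\mathfrak{k}$ acts on $\mathfrak{m}$ by $W\oplus W^*$, identifying the representations via $W$ matches the $W$-summand with $V$ and the $W^*$-summand with $V^*$. Because $W$ and $W^*$ are inequivalent irreducible $\mathfrak{k}$-modules, Schur's lemma forces every $\mathfrak{k}$-equivariant map $\mathfrak{m}\to\mathfrak{l}_{-1}$ and $\mathfrak{m}\to\mathfrak{l}_{1}$ to act by a scalar on each irreducible piece. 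This is exactly the constraint that produced the two scalars $b_1,b_2$ in the previous example, so the general admissible $\alpha$ supplied by Proposition~\ref{pp29} has, as a parametrized family, the same shape as the one written there.

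I would then impose the normality conditions of Lemma~\ref{norcur}(4). Because the constrained form of $\alpha$ is identical to the one in the example, the normality equations have the same structure, are consistent, and again determine every component except $b_1,b_2$ (the $\mathfrak{l}_{-2}$-part being forced by the bracket on $\mathfrak{h}/\mathfrak{k}$, and the remaining scalars fixed after the normalization used there); this establishes existence. To obtain uniqueness up to equivalence I would invoke the morphisms M1)--M3): since $W\neq W^*$, the centralizer of $i(K)$ in $P$ scales the two irreducible summands of $\mathfrak{l}_{\pm 1}$ independently and also contains the grading directions, so the induced action on $(b_1,b_2)$ is transitive on admissible nonzero pairs and collapses the entire normal family to a single equivalence class. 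Together with the uniqueness of $i$ from Proposition~\ref{odw1} (via Proposition~\ref{pp30}), this gives the uniqueness claim.

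For the flatness dichotomy I would use the curvature formula for extensions, which by Theorem~\ref{them3}(3), ensuring $\alpha|_{\mathfrak{h}}$ is a Lie algebra homomorphism, takes the form $\kappa_\alpha(X,Y)=[\alpha(X),\alpha(Y)]-\alpha(R(X,Y))$ with $R$ the curvature of the underlying symmetric space. By Lemma~\ref{norcur}(4) the extension is torsion-free, so the only obstruction to flatness is the harmonic curvature of type $c$, i.e.\ the $\mathfrak{l}_0$-valued component $\kappa_\alpha|_{\mathfrak{l}_{-1}\times\mathfrak{l}_{-1}}$. In the previous example the symmetric space is $PGl(n+1,\mathbb{R})/Gl(n,\mathbb{R})$, for which the direct computation gave $\kappa_\alpha=0$, as there $[\alpha(X),\alpha(Y)]$ reproduces $\alpha(R(X,Y))$ exactly. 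For every other non-complex simple para-pseudo-hermitian symmetric space with $W\neq W^*$ the $\mathfrak{k}$-module $\mathfrak{m}$ is the same but the bracket $[\mathfrak{m},\mathfrak{m}]\subset\mathfrak{h}$, hence $R$, is genuinely different, and this discrepancy survives $\alpha$ on the $\mathfrak{l}_0$-component, so $\kappa_\alpha\neq 0$.

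The step I expect to be the main obstacle is precisely this last one: verifying, for every remaining entry of the para-pseudo-hermitian table, that no admissible choice of the free scalars can force $[\alpha(X),\alpha(Y)]=\alpha(R(X,Y))$ on the $\mathfrak{l}_0$-part, so that flatness is exclusive to the projective example. I would handle it by computing the $c$-component of $\kappa_\alpha$ on $\mathfrak{l}_{-1}\times\mathfrak{l}_{-1}$ and tracking how $R$ enters through $\alpha|_{\mathfrak{k}}=i'$ and the center of $\mathfrak{h}$; a single nonzero value of this component already obstructs flatness, and its vanishing is equivalent to the symmetric-space bracket agreeing with the flat-model bracket under $\alpha$, which happens only in the projective case. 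The equivalence step via M1) is then routine, copying the example verbatim.
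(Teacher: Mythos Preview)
Your approach matches the paper's: Schur's lemma reduces the $\mathfrak{m}\to\mathfrak{l}_{\pm 1}$ components of $\alpha$ to scalar multiples, placing you in the setup of the previous example, whence normality and the M1) action give existence and uniqueness, while the curvature formula $\kappa_\alpha(X,Y)=[\alpha(X),\alpha(Y)]-\alpha(R(X,Y))$ with a genuinely different $R$ yields non-flatness. The paper's own treatment of the flatness step is in fact briefer than your plan---it simply asserts that a different $R$ forces $\kappa_\alpha\neq 0$---and does not carry out the case-by-case verification you flag as the main obstacle.
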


We investigate two remaining cases with simple group generated by symmetries, where the representation $W$ is self dual.

\begin{example}
Extension from $(O(p+2,q),O(p,q)\times O(2))$ and $O(p+1,q+1),O(p,q)\times O(1,1))$ to $(PGl(n+2,\mathbb{R}),P)$:

The subgroups $O(p,q)\times O(2)$ and $O(p,q)\times O(1,1)$ are represented by the following matrices, where the blocks are $(2,n)$ and $B\in O(p,q)$ and $b\in O(2)$ or $b\in O(1,1)$

\[ \left( \begin{array}{cc}
b & 0  \\
0 & B  \end{array} \right).\] 

The symmetry at $o$ is represented by a left multiplication by the following matrix in $O(p,q)\times O(2)$ or $O(p,q)\times O(1,1)$, where $E$ are the identity matrices

\[ \left( \begin{array}{cc}
E & 0  \\
0 & -E  \end{array} \right).\] 

$K$ is the following subgroup, where $A\in O(p,q)$

\[ \left( \begin{array}{cc}
E & 0  \\
0 & A  \end{array} \right).\] 

Now $i$ is the following injective homomorphism, which maps $K$ into $P$

\[ \left( \begin{array}{ccc}
1 & 0 & 0 \\
0 & A & 0 \\
0 & 0 & 1 \end{array} \right).\] 

The adjoint representation of $K$ is $\lambda_1 \oplus \lambda_1$ and $i(K)$ is $\lambda_1 \oplus \lambda_{n-1}$, since $K=O(p,q)$, the $\lambda_{n-1}\cong \lambda_1$ as representation of $K$. Now the possible isomorphisms are maps $(X,Y)\mapsto (b_1X+b_2Y,b_3X+b_4Y)$ for $b_1b_4-b_2b_3\neq 0$. Thus the only possible $\alpha$ are the following, where $a\in \mathbb{R}$ and $c$ is $1$ in the $O(2)$ case and $-1$ in the $O(1,1)$ case, $I$ is matrix with $p$ entries on diagonal $1$ and remaining $q$ entries $-1$ and $c_1,c_2,d_1,d_2,d_3,d_4,e_1\in \mathbb{R}$

\[ \left( \begin{array}{ccc}
0 & a & -X^TI \\
-c a & 0 & -c Y^TI \\ 
X & Y & A   \end{array} \right) 
\\
\mapsto 
\\
\left( \begin{array}{ccc}
c_1a & d_1X^TI+d_2Y^TI & e_1a \\ 
b_1X+b_2Y & A+\frac{c_2a}{n}E & d_3X+d_4Y \\
(b_1b_4-b_2b_3)a & b_3X^TI+b_4Y^TI & (-c_1-c_2)a   \end{array} \right). \]

We denote $\gamma=cb_1b_3+b_2b_4$ and $\delta=b_1b_4-b_2b_3$. For fixed $b_1, b_2, \gamma, \delta$ the normality conditions are $c_2=\frac{n}{n+1}\frac{-\gamma}{\delta}$,$e_1=d_2d_3-d_1d_4$, $b_4d_1-b_3d_2=\frac{-b_4^2-cb_3^2}{\delta}$, $b_2d_3-b_1d_4=\frac{b_2^2+cb_1^2}{\delta}$, $b_2d_1-b_1d_2-b_4d_3+b_3d_4=\frac{n}{n+1}\frac{-\gamma}{\delta}$ and $b_2d_1-b_1d_2+b_4d_3-b_3d_4+2c_1=\frac{n}{n+1}\frac{\gamma}{\delta}$. Thus there are six conditions on seven variables and we compute the solution for $d_1, d_2, d_3, d_4, e_1$ and $c_2$ and let $c_1$ as a free parameter. Thus we can choose $c_1=\frac{n\gamma}{2(n+1)\delta}$ and the $\alpha$ extending to normal geometry for fixed $b_1, b_2, \gamma, \delta$ is
\[ \left( \begin{array}{ccc}
0 & a & -X^TI \\
-c a & 0 & -c Y^TI \\ 
X & Y & A   \end{array} \right) 
\mapsto 
\]
\[
\left( \begin{array}{ccc}
\frac{n\gamma}{2(n+1)\delta} a& V_1 &-(\frac{((3n+2)(n+2)\gamma^2}{4(n+1)^2\delta^3}+\frac{c}{\delta})a \\ 
b_1X+b_2Y & A-\frac{1}{n+1}\frac{\gamma}{\delta}Ea & V_2\\
\delta a & b_3X^TI+b_4Y^TI & \frac{n\gamma}{2(n+1)\delta} a   \end{array}\right),  \]
where 
\[ V_1=-(\frac{(n+2)\gamma b_3}{2(n+1)\delta^2}+\frac{b_4}{\delta})X^TI-(\frac{c(n+2)\gamma b_4}{2(n+1)\delta^2}-\frac{cb_3}{\delta})Y^TI, \] 
\[ V_2=-(\frac{(n+2)\gamma b_1}{2(n+1)\delta^2}-\frac{b_2}{\delta})X-(\frac{c(n+2)\gamma b_2}{2(n+1)\delta^2}+\frac{cb_1}{\delta})Y .\]

The curvature of the extended geometry by this $\alpha$ is:
\[ \kappa_{\alpha}\left( \left( \begin{array}{ccc}
0 & a & -X^TI \\
-c a & 0 & -c Y^TI \\ 
X & Y & 0   \end{array} \right) ,
\left( \begin{array}{ccc}
0 & b & -Z^TI \\
-c b & 0 & -c W^TI \\ 
Z & W & 0   \end{array} \right)  \right)=
\]
\[
\left( \begin{array}{ccc}
0 & -\frac{n+2}{n+1}\frac{(cb_3^2+b_4^2)\gamma}{\delta^3}V_3 & 0\\
0 & \gamma \frac{R_1\delta-\frac{(c+1)(n+2)}{2(n+1)}R_2+\frac{n+2}{2(n+1)}R_3}{\delta^2} -\frac{\gamma (W^TIX-Y^TIZ)}{(n+1)\delta} E & -\frac{n+2}{n+1}\frac{(cb_1^2+b_2^2)\gamma}{\delta^3}V_4 \\ 
0 & 0 & 0   \end{array} \right), \] 
where 
\[ R_1=XW^TI+WX^TI-YZ^TI-ZY^TI,\] 
\[ R_2=b_1b_4(XW^TI-YZ^TI)-b_2b_3(WX^TI-ZY^TI),\]
\[ R_3=b_1b_3(ZX^TI-XZ^TI)+b_2b_4(WY^TI-YW^TI\]
are $n\times n$ matrices and 
\[ V_3=b_1bX^TI+b_2bY^TI-b_1aZ^TI-b_2aW^TI,\] 
\[ V_4=b_3bX+b_4bY-b_3aZ-b_4aW\]
are matrices $1\times n$ and $n\times 1$.

For $\gamma=cb_1b_3+b_2b_4=0$ the extended geometry is flat. Using algorithm in proposition \ref{lab_1} we compute, that the infinitesimal automorphisms for $\gamma \neq 0$ are of the form $\alpha(\mathfrak{g})$, with exception of the case $c=-1,\ b_1^2=b_2^2,\ b_3^2=b_4^2$, when the infinitesimal automorphisms consists $\alpha(\mathfrak{g})$ and elements of center of $\mathfrak{l}_0$ with trivial action on $\mathfrak{l}_2$. The equivalence classes are determined by $(b_1,b_2,b_3,b_4)$ in the same way as in dimension $3$. In particular, $t=\frac{\gamma}{\delta}$.

Let us briefly discuss a geometric realization of the extension. Notice, that $G/K$ is a generalization of Stiefel variety, i.e. pairs of orthonormal vectors in $\mathbb{R}^{n+2}$ with induced metric of $O(2)$ and $O(1,1)$ in the plain given by this two vectors. The coefficients $(b_1,b_2)$ from definition of $\alpha$, together with the frame $(X,Y)$ in the Stiefel variety are interpreted as a vector in $\mathbb{R}^{n+2}$. Clearly, there is a $n$-dimensional subbundle of such points in the Stiefel variety leading to the same vector for the chosen coordinates $(b_1,b_2)$. Choosing other non-collinear coordinates $(b_3,b_4)$, we get for any point in the Stiefel variety two subbundles and the tangent bundles to them gives the Lagrangean contact structure. Let $\phi$ be angle between vectors of coordinates $(b_1,b_2)$ and $(b_3,b_4)$, which does not depend on the choice of basis. Then for $c=1$ (the $O(2)$ case), we get $t=\frac{\gamma}{\delta}=cotan(\phi)$ and for $c=-1$ (the $O(1,1)$ case), we get $t=\frac{\gamma}{\delta}=cotanh(\phi)$.

\begin{theorem}\label{5.6}
Up to equivalence, all regular normal extensions from $(O(p+2,q),O(p,q))$ and $O(p+1,q+1),O(p,q))$ in the case $b_1^2>b_2^2$ to a Lagrangean contact geometry are given by the following one parameter classes for $t\geq 0$:
\[\alpha \left( \begin{array}{ccc}
0 & a & -X^TI \\
-c a & 0 & -c Y^TI \\ 
X & Y & A   \end{array} \right) 
= 
\]
\[
\left( \begin{array}{ccc}
\frac{n}{2(n+1)}t a& -(\frac{(n+2)}{2(n+1)}ct^2+1)X^TI-\frac{n}{2(n+1)}tY^TI&-\frac{((3n+2)(n+2)}{4(n+1)^2}t^2a-ca \\ 
X & A-\frac{1}{n+1}t Ea &-\frac{(n+2)}{2(n+1)}tX-cY\\
a & c t X^TI+Y^TI & \frac{n}{2(n+1)} t a   \end{array} \right)  \]
with curvature
\[ \kappa_{\alpha}\left( \left( \begin{array}{ccc}
0 & a & -X^TI \\
-c a & 0 & -c Y^TI \\ 
X & Y & 0   \end{array} \right) ,
\left( \begin{array}{ccc}
0 & b & -Z^TI \\
-c b & 0 & -c W^TI \\ 
Z & W & 0   \end{array} \right)  \right)=
\]
\[
\left( \begin{array}{ccc}
0 & \frac{(n+2)t}{n+1}(1+ct^2)(bX^TI-aZ^TI) & 0\\
0 & \frac{t}{(n+1)}((n+1)R_1-R_2+(n+2)ctR_3) -\frac{tr_1}{(n+1)}E& -\frac{(n+2)t}{n+1}V_1 \\ 
0 & 0 & 0   \end{array} \right), \]
where 
\[ R_1=WX^TI-ZY^TI,\]
\[ R_2=XW^TI-YZ^TI,\]
\[ R_3=ZX^TI-XZ^TI,\]
\[ r_1=W^TIX-Y^TIZ,\]
\[ V_1=t(bX-aZ)+c(bY-aW).\]

For $b_1^2=b_2^2$ and $O(p+1,q+1),O(p,q))$:

a) for $b_3^2<b_4^2$
\[\alpha \left( \begin{array}{ccc}
0 & a & -X^TI \\
a & 0 & Y^TI \\ 
X & Y & A   \end{array} \right) 
= 
\]
\[
\left( \begin{array}{ccc}
\frac{n}{2(n+1)} a& -\frac{(n+2)}{2(n+1)}Y^TI-X^TI&-\frac{(3n+2)(n+2)}{4(n+1)^2}a+a \\ 
X+Y & A-\frac{1}{n+1} Ea &\frac{n}{2(n+1)}(X+Y)\\
a & Y^TI & \frac{n}{2(n+1)} a   \end{array} \right)  \]
with curvature
\[ \kappa_{\alpha}\left( \left( \begin{array}{ccc}
0 & a & -X^TI \\
a & 0 & Y^TI \\ 
X & Y & 0   \end{array} \right) ,
\left( \begin{array}{ccc}
0 & b & -Z^TI \\
b & 0 & W^TI \\ 
Z & W & 0   \end{array} \right)  \right)=
\]
\[
\left( \begin{array}{ccc}
0 & \frac{(n+2)}{n+1}(bY^TI-aZ^TI)+\frac{(n+2)}{n+1}(bX^TI-aW^TI) & 0\\
0 & \frac{1}{(n+1)}((n+1)R_1-R_2-(n+2)R_3) -\frac{r_1}{(n+1)}E& 0 \\ 
0 & 0 & 0   \end{array} \right), \]
where 
\[ R_1=WX^TI-YZ^TI,\]
\[ R_2=XW^TI-ZY^TI,\]
\[ R_3=YW^TI-WY^TI,\]
\[ r_1=W^TIX-Y^TIZ.\]

b) for $b_3^2=b_4^2$
\[\alpha \left( \begin{array}{ccc}
0 & a & -X^TI \\
a & 0 & Y^TI \\ 
X & Y & A   \end{array} \right) 
= 
\]
\[
\left( \begin{array}{ccc}
\frac{n}{2(n+1)} a& -\frac{n}{4(n+1)}X^TI+\frac{n}{4(n+1)}Y^TI&\frac{n^2}{8(n+1)^2}a \\ 
X+Y & A-\frac{1}{n+1} Ea &\frac{n}{4(n+1)}(X+Y)\\
2a & -X^TI+Y^TI & \frac{n}{2(n+1)} a   \end{array} \right)  \]
with curvature
\[ \kappa_{\alpha}\left( \left( \begin{array}{ccc}
0 & a & -X^TI \\
a & 0 & Y^TI \\ 
X & Y & 0   \end{array} \right) ,
\left( \begin{array}{ccc}
0 & b & -Z^TI \\
b & 0 & W^TI \\ 
Z & W & 0   \end{array} \right)  \right)=
\]
\[
\left( \begin{array}{ccc}
0 & 0 & 0\\
0 & \frac{1}{2(n+1)}(nR_1+nR_2+(n+2)R_3-(n+2)R_4) -\frac{r_1}{(n+1)}E& 0 \\ 
0 & 0 & 0   \end{array} \right), \]
where 
\[ R_1=XW^TI-ZY^TI,\]
\[ R_2=WX^TI-YZ^TI,\]
\[ R_3=XZ^TI-ZX^TI,\]
\[ R_4=YW^TI-WY^TI,\]
\[ r_1=W^TIX-Y^TIZ.\]
\end{theorem}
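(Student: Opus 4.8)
The plan is to read off the classification from the general normal extension computed just above, where, for fixed parameters $b_1,b_2,b_3,b_4$ with $\delta=b_1b_4-b_2b_3\neq 0$, the full matrix of $\alpha$ and of $\kappa_\alpha$ was produced. By Proposition \ref{pp29} every extension of the orthogonal symmetric space to the Lagrangean contact structure has this form, and solving the six normality conditions fixes $c_2,d_1,\dots,d_4,e_1$ in terms of the $b_i$ while leaving $c_1$ free; setting $c_1=\tfrac{n\gamma}{2(n+1)\delta}$ normalises this remaining gauge. Thus the normal extensions are parametrised exactly by the quadruple $(b_1,b_2,b_3,b_4)$, and the whole statement reduces to an orbit analysis of this quadruple under the admissible morphisms M1), M2), M3).

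First I would let the three morphism types act on the parameters. Morphism M1) rescales $\delta$, so I normalise $\delta=1$; morphism M2) acts through the central $SO(2)$ (resp.\ $SO(1,1)$) subgroup of $H$, i.e.\ as a rotation (resp.\ boost) on the pairs $(b_1,b_2)$ and $(b_3,b_4)$ simultaneously, exactly as in the three-dimensional computation; and morphisms M3) and M1) provide the residual sign changes. In the compact case $c=1$ the rotation always brings $(b_1,b_2)$ to $b_2=0$, so only $b_1^2>b_2^2$ occurs; in the split case $c=-1$ the boost preserves $b_1^2-b_2^2$, and after using M2) to assume $b_1^2\geq b_2^2$ one is left with the two possibilities $b_1^2>b_2^2$ and $b_1^2=b_2^2$. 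Whenever $b_1^2>b_2^2$ the single invariant $t=\gamma/\delta$ survives (equal to $\cot\phi$ or $\coth\phi$ for the angle $\phi$ between the coordinate vectors), and sign normalisation yields $t\geq 0$; substituting the corresponding canonical values of the $b_i$ into the formulas for $\alpha$ and $\kappa_\alpha$ above produces the displayed matrices of the first case for both families.

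The genuinely delicate part is the split case $c=-1$ with $b_1^2=b_2^2$, where $(b_1,b_2)$ is a null direction for the boost, the M2)-orbits degenerate, and one is forced into the same trichotomy already seen for $\mathfrak{so}(2,1)/\mathfrak{so}(1,1)$ in dimension three. Here the boost can no longer remove $b_2$, so instead I would stratify by the second null condition, distinguishing $b_3^2<b_4^2$ from $b_3^2=b_4^2$; in the former a boost brings $(b_3,b_4)$ to a non-null normal form (canonically $b_1=b_2=1,\ b_3=0$), while in the doubly-null case one is pinned to $b_1=b_2=1,\ b_3=-1,\ b_4=1$ with no free parameter. The main obstacle will be proving completeness and irredundancy of this orbit list for the noncompact boost action, i.e.\ that every normal $\alpha$ is equivalent to exactly one listed representative; this is precisely the point where the non-closedness of the $SO(1,1)$-orbits must be handled carefully, in direct analogy with the three-dimensional model.

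Finally, to guarantee that the surviving invariant genuinely labels inequivalent geometries rather than mere reparametrisations, I would apply Proposition \ref{lab_1} to compute the infinitesimal automorphisms. As recorded above, for $\gamma\neq 0$ they coincide with $\alpha(\mathfrak g)$, except in the doubly-null case $c=-1,\ b_1^2=b_2^2,\ b_3^2=b_4^2$, where in addition the central element of $\mathfrak l_0$ acting trivially on $\mathfrak l_2$ occurs. In each case the automorphism algebra is no larger than forced, so distinct canonical parameters yield non-isomorphic extensions. Substituting the three families of canonical parameters into the curvature formula then gives the three displayed expressions for $\kappa_\alpha$ and completes the proof.
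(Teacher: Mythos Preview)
Your proposal is correct and follows essentially the same approach as the paper: parametrise the normal extensions by $(b_1,b_2,b_3,b_4)$ after solving the six normality conditions with $c_1$ gauged away, then reduce to canonical forms via the M1)--M3) morphisms exactly as in the three-dimensional $\mathfrak{sl}(3,\mathbb R)$ computation, and finally invoke Proposition~\ref{lab_1} to confirm that the infinitesimal automorphisms are just $\alpha(\mathfrak g)$ (plus the central piece in the doubly-null case), so that distinct canonical parameters give inequivalent geometries. The paper is terser, simply stating that ``the equivalence classes are determined by $(b_1,b_2,b_3,b_4)$ in the same way as in dimension~3'' and that $t=\gamma/\delta$, but your more explicit orbit description is exactly the unpacking of that sentence.
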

\end{example}

The classification in the semisimple case is the following:
 
\begin{theorem}
The only semisimple non-simple symmetric spaces without complex factors allowing extensions to Lagrangean contact structures are semisimple para-pseudo-hermitian symmetric spaces. For the latter cases, the infinitesimal inclusion $i'$ from proposition \ref{pp29} is unique up to equivalence, and if there is no normal subgroup of $L$ in $P$, then the $i$ is unique up to equivalence.
\end{theorem}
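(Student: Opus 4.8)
The plan is to reduce to the product structure of semisimple symmetric spaces and then analyse how the one–dimensional $\mathfrak{h}/\mathfrak{k}$ and the single $\mathfrak{l}_{-2}$ interact with the individual simple factors. Writing the space as $G/H=\prod_{i=1}^{k}G_i/H_i$ with $k\ge 2$, Theorem \ref{them3}(5) forces each factor to be pseudo-hermitian or para-pseudo-hermitian, so each $\mathfrak{h}_i=\mathfrak{k}_i+\mathbb{R}C_i$ has one–dimensional centre. Under an extension I identify $\mathfrak{m}=\bigoplus_i\mathfrak{m}_i$ with $\mathfrak{l}_{-1}=V\oplus V^{*}$; decomposing into $\mathfrak{k}^{ss}=\bigoplus_i\mathfrak{k}_i^{ss}$–isotypic pieces gives $V=\bigoplus_i V_i$, $V^{*}=\bigoplus_i V_i^{*}$ and $\mathfrak{m}_i\cong V_i\oplus V_i^{*}$, with $i(K)$ block–diagonal in $\mathrm{Sl}(n,\mathbb{R})$. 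I would first record the two shapes of the centre generator $C_i$ acting on $\mathfrak{m}_i$: for a para-factor it is a grading element, acting as $+1$ on $V_i$ and $-1$ on $V_i^{*}$ and hence \emph{preserving} the splitting, while for a pseudo-factor it is a complex structure $J_i$ \emph{interchanging} $V_i$ and $V_i^{*}$.

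The heart of the argument is the incompatibility of a mixing centre element with the single $\mathfrak{l}_{-2}$. Let $\phi\colon\mathfrak{l}_{-1}\to\mathfrak{l}_{1}$ be the $\mathfrak{l}_1$–component of $\alpha|_{\mathfrak{m}}$, let $Z$ span $\mathfrak{l}_{-2}$, and set $M=\mathrm{ad}(Z)\circ\phi\colon\mathfrak{l}_{-1}\to\mathfrak{l}_{-1}$. A direct bracket computation in the $(1,n,1)$–grading shows that $\mathrm{ad}(\alpha(C_i))$ acts on $\mathfrak{l}_{-1}$ modulo $\mathfrak{p}$ as $Z_0^{(i)}+\mu_i M$, where $Z_0^{(i)}\in\mathfrak{l}_0$ is block–diagonal (splitting–preserving) and $\mu_i$ is the $\mathfrak{l}_{-2}$–coefficient of $\alpha(C_i)$. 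Regularity together with Lemma \ref{norcur}(1),(2) forces $\kappa(\mathfrak{l}_{-1},\mathfrak{l}_{-1})|_{\mathfrak{l}_{-2}}=0$, so the contact form on each block equals $\alpha\bigl([\,\cdot\,,\cdot\,]\bigr)|_{\mathfrak{l}_{-2}}$, which on block $i$ is $c_i\,\mu_i Z$; nondegeneracy of the contact form on each block therefore forces $\mu_i\neq 0$ for every $i$.

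Now I would derive the contradiction from a pseudo-factor. Since $J_j$ interchanges $V_j$ and $V_j^{*}$ while $Z_0^{(j)}\in\mathfrak{l}_0$ preserves the splitting, realising $J_j$ requires the crossing term, i.e.\ $\phi$ must carry $V_j$ into the $V_j^{*}$–part of $\mathfrak{l}_1$, so that $M|_{j}$ is a nonzero off–diagonal (mixing) operator. But $C_l\in\mathfrak{h}_l$ acts trivially on $\mathfrak{m}_j$ for every other factor $l\neq j$, so on block $j$ one needs $Z_0^{(l)}|_{j}+\mu_l M|_{j}=0$; comparing off–diagonal parts, and using that $M|_{j}$ is mixing whereas $Z_0^{(l)}|_{j}$ is splitting–preserving, gives $\mu_l=0$, contradicting $\mu_l\neq0$. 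Two pseudo-factors collide the same way, since the triviality of $C_j$ on the other pseudo-block already forces $\mu_j=0$. Hence no pseudo-factor can occur and every factor is para-pseudo-hermitian. For the converse I take $\phi=0$, realise each grading $C_i$ purely in $\mathfrak{l}_0$, choose $\mathfrak{k}\cap\mathrm{centre}=\{\sum a_iC_i:\ \sum a_i=0\}$ so that all $C_i$ project to the same generator of $\mathfrak{h}/\mathfrak{k}$ with $\mu_i\neq0$, and assemble $\alpha$ block–wise by Proposition \ref{pp29}; this yields a regular extension, so semisimple para-pseudo-hermitian spaces do admit one.

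Finally, uniqueness follows the template of Proposition \ref{odw1}. The block decomposition $V=\bigoplus V_i$ is the canonical $\mathfrak{k}^{ss}$–isotypic decomposition, on each block $\mathfrak{k}_i\to\mathfrak{sl}(n_i,\mathbb{R})$ is the irreducible $W_i$, so Proposition \ref{pp30} pins down each block up to conjugation and hence fixes $i'$ up to equivalence; the ambiguity $W_i\leftrightarrow W_i^{*}$ is absorbed by the involution $g\mapsto(g^{-1})^{T}$ exactly as in Proposition \ref{odw1}. If no normal subgroup of $L$ lies in $P$, the group homomorphism $i$ is determined by $i'$ together with the image of the symmetry, giving uniqueness of $i$. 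The main obstacle is the block–coupling in the third paragraph: the off–diagonal operator $M|_{j}$ produced by a single pseudo-factor is shared by all factors through the \emph{one} map $\phi$ and the \emph{one} generator $Z$, and it is precisely this rigidity — absent in the simple case of Proposition \ref{odw1}, where a lone pseudo-factor is admissible — that eliminates pseudo-factors as soon as $k\ge 2$.
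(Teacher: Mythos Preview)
Your argument for excluding pseudo-hermitian factors has a genuine gap. The identity you use repeatedly,
\[
\bigl(\mathrm{ad}(C_i)\text{ on }\mathfrak{m}\bigr)\ \longleftrightarrow\ \bigl(\text{the }\mathfrak{l}_{-1}\text{-part of }[\alpha(C_i),\alpha(\cdot)]\bigr)\ =\ Z_0^{(i)}+\mu_i M,
\]
is a consequence of the $K$-equivariance of $\alpha$, and therefore holds only for elements of $\mathfrak{k}$. But you have already (correctly) argued from regularity that $\mu_i\neq 0$, i.e.\ $C_i\notin\mathfrak{k}$ for every $i$. Hence neither the step ``realising $J_j$ requires the crossing term, so $M|_j$ is a nonzero off-diagonal operator'' nor the step ``$Z_0^{(l)}|_j+\mu_l M|_j=0$ on block $j$'' is justified: for individual $C_i$ the difference between the two sides is precisely the $\mathfrak{l}_{-1}$-component of $\kappa(\mathfrak{l}_{-1},\mathfrak{l}_{-2})$, which by Lemma~\ref{norcur}(2) need not vanish. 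A second unproved assertion is that $J_j$ actually \emph{interchanges} $V_j$ and $V_j^{*}$ under the identification $\mathfrak{m}_j\cong V_j\oplus V_j^{*}$ supplied by $\alpha$; since $V_j\cong V_j^{*}$ as $\mathfrak{k}_j^{ss}$-modules in the $\mathfrak{so}$-case, this is not automatic.

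The paper's route avoids $\phi$ and $M$ altogether and works instead with linear combinations $C=\sum a_lC_l$ that \emph{do} lie in $\mathfrak{k}$ (these form a hyperplane in the centre). For such $C$ one has $\alpha(C)\in\mathfrak{l}_0$, so $\alpha(C)$ preserves $V$ and commutes with the irreducible real-type representation $\lambda_1$ of $\mathfrak{so}(p_j,q_j)$ on $V_j$; by Schur it is a real scalar there and cannot produce the eigenvalues $\pm a_j i$ of $a_jJ_j$ unless $a_j=0$. Thus the hyperplane $\mathfrak{k}\cap\mathrm{centre}(\mathfrak{h})$ is exactly $\{a_j=0\}$, forcing $C_l\in\mathfrak{k}$ for all $l\neq j$ and contradicting your own regularity conclusion $\mu_l\neq 0$. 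Your existence and uniqueness paragraphs are essentially the paper's; the necessity argument should be rewritten along the lines above rather than via the operator $M$.
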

\begin{proof}
For semisimple para-pseudo-hermitian symmetric spaces without complex factors, the extension can be done in two steps. First we take extension from the sum of symmetric spaces to the structure group $(Gl(n,\mathbb{R})\times Gl(n,\mathbb{R}))\cap O(n,n)$, which acts as standard and dual to standard representation and is unique up to para-complex multiple. Then the claim follows in the same way as proposition \ref{odw1}.

Now assume the extension exists. Then since the representation of $i(K)$ is completely reducible, the simple factors have extension to Lagrangean contact geometry, when we restrict to the submatrix (in basis compatible with factors) with values in this factor. This defines extension to Lagrangean contact geometry of lower dimension. Assume that one factor is pseudo-hermitian and not para-hermitian, then the eigenvalues of its center are $\pm i$ and $H/K$ has to be this center, which is contradiction since due to regularity the $H/K$ intersects all factors.
\end{proof}

\bf Geometrical interpretation. \rm

As described in \cite{odk14} one can relate Lagrangean contact geometry with system of differential equations. In our case the relation is as follows.

Let $G/K$ be a reflexion space with underlying semisimple symmetric space $G/H$ and $(\alpha,i)$ extension to Lagrangean contact geometry $(p: G\times_i P\to G/K, \omega_\alpha)$. Let $E$ be
\[ Tp \circ \omega_\alpha^{-1} \left( \begin{array}{ccc}
\mathfrak{l}_{0} & \mathfrak{l}_{1} & \mathfrak{l}_{2} \\
\mathfrak{l}_{-1} & \mathfrak{l}_{0} & \mathfrak{l}_{1} \\
0 & 0 & \mathfrak{l}_{0} \end{array} \right)\]
and let $V$ be
\[ Tp \circ \omega_\alpha^{-1} \left( \begin{array}{ccc}
\mathfrak{l}_{0} & \mathfrak{l}_{1} & \mathfrak{l}_{2} \\
0 & \mathfrak{l}_{0} & \mathfrak{l}_{1} \\
0 & \mathfrak{l}_{-1} & \mathfrak{l}_{0} \end{array} \right).\]

Then, since the latter Cartan geometry is torsion-free, the distributions $E,V$ are integrable. Since $i(K)\subset L_0$, these distributions are invariant with respect to $K$ action i.e. they are given by $\mathfrak{e},\mathfrak{v}\subset \mathfrak{g}/\mathfrak{k}$ and the leaf space corresponding to $V$ is homogeneous space $M=G/exp(\mathfrak{v})$. Now the Cartan geometry corresponds to system of differential equations on $M$. The space of solutions is then homogeneous space $S=G/exp(\mathfrak{e})$ and the correspondence is as follows: For the point of $S$ there is $g\cdot exp(\mathfrak{e})$ orbit in $G$, which projects to hyperspace in $M$. Thus the symmetry group of differential equation is $G$ (if the geometry is not flat).

\begin{example}
Extension from $(O(p+2,q),O(p,q)\times O(2))$ to $(PGl(n+2,\mathbb{R}),P)$. If $\alpha$ is given as in theorem \ref{5.6}, then $\mathfrak{e}$ is given by $a=0, t X^TI+Y^TI=0$ and $\mathfrak{v}$ is given by $a=0, X=0$. Thus both $M$ and $S$ are $O(p+2,q)/O(p+1,q)$ i.e. quadric in $R^{n+2}$. The correspondence is as follows: The point $g\cdot exp(\mathfrak{e})$ is associated with the intersection of quadric with hyperplane through $g\cdot O(p+1,q)$ orthogonal (in the metric defining quadric) to $g\cdot (t,1,0,\dots,0)$.
\end{example}

\begin{example}
Extension from $(O(p+1,q+1),O(p,q)\times O(1,1))$ to $(PGl(n+2,\mathbb{R}),P)$. There are three possible non-equivalent $\alpha$.

a) $b_1^2>b_2^2$

Then $\mathfrak{e}$ is given by $a=0, -t X^TI+Y^TI=0$ and $\mathfrak{v}$ is given by $a=0, X=0$. Now $M$ is $O(p+1,q+1)/O(p+1,q)$, if $t>1$ then $S$ is $O(p+1,q+1)/O(p+1,q)$, if $t<1$ then $S$ is $O(p+1,q+1)/O(p,q+1)$ and if $t=1$ then $S$ is $O(p+1,q+1)/(O(p,q)\ltimes R^n)$ i.e. again quadric in $R^{n+2}$. The correspondence is as follows: The point $g\cdot exp(\mathfrak{e})$ is associated with the intersection of quadric with hyperplane through $g$ orthogonal (in the metric defining quadric) to $g\cdot (-t,1,0,\dots,0)$.

b) $b_1^2=b_2^2$ and $b_3^2<b_4^2$

Then $\mathfrak{e}$ is given by $a=0, Y=0$ and $\mathfrak{v}$ is given by $a=0, Y=-X$. Now $M$ is $O(p+1,q+1)/(O(p,q)\ltimes R^n)$ and $S$ is $O(p+1,q+1)/O(p+1,q)$. The correspondence is as follows: The point $g\cdot exp(\mathfrak{e})$ is associated with the intersection of quadric with hyperplane through $g$ orthogonal (in the metric defining quadric) to $g\cdot (0,1,0,\dots,0)$.

c) $b_1^2=b_2^2$ and $b_3^2=b_4^2$

Then $\mathfrak{e}$ is given by $a=0, X=Y$ and $\mathfrak{v}$ is given by $a=0, Y=-X$. Now $M$ is $O(p+1,q+1)/(O(p,q)\ltimes R^n)$ and $S$ is $O(p+1,q+1)/(O(p,q)\ltimes R^n)$. The correspondence is as follows: The point $g\cdot exp(\mathfrak{e})$ is associated with the intersection of quadric with hyperplane through $g$ orthogonal (in the metric defining quadric) to $g\cdot (-1,1,0,\dots,0)$.
\end{example}

\section{Extensions to CR structures}

In this section we construct examples of partially integrable almost CR structures with smooth system of symmetries, i.e. due to the torsion freeness we construct the CR structures. So we want to find extension to Cartan geometry of type $(\mathfrak{su}(p+1,q+1),P)$ with the following gradation:
\[ \left( \begin{array}{ccc}
\mathfrak{l}_{0} & \mathfrak{l}_{1} & \mathfrak{l}_{2} \\
\mathfrak{l}_{-1} & \mathfrak{l}_{0} & \mathfrak{l}_{1} \\
\mathfrak{l}_{-2} & \mathfrak{l}_{-1} & \mathfrak{l}_{0} \end{array} \right),\]
where the blocks are $(1,n,1)$ and $AJ+JA^*=0$ for $A\in \mathfrak{su}(p+1,q+1)$, where $J$ is representing the pseudo hermitian form $$(x_0,x_i,x_{n+1})J(y_0,y_i,y_{n+1})^*=x_0\bar{y}_{n+1}+x_{n+1}\bar{y}_{0}+\sum_{i=1}^p x_i\bar{y}_{i}-\sum_{i=p+1}^n x_i\bar{y}_{i}.$$

The representation of the semisimple part of $\mathfrak{l}_{0}$ on $\mathfrak{l}_{-1}$ is $V$, where $V$ is standard representation of $\mathfrak{su}(p,q)$.

\begin{prop}\label{odw2}
The only non-complex simple symmetric spaces allowing extensions to CR structures are simple pseudo-hermitian symmetric spaces and simple para-pseudo-hermitian symmetric spaces $\mathfrak{so}(p+1,q+1)/ \mathfrak{so}(p,q)+\mathfrak{so}(1,1)$. For the latter cases, the infinitesimal inclusion $i'$ from proposition \ref{pp29} is unique up to equivalence, and if there is no normal subgroup of $L$ in $P$, then the $i$ is unique up to equivalence.
\end{prop}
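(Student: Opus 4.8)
The plan is to run the argument of Proposition \ref{odw1} in the dual situation. There the model fibre was $\mathfrak{l}_{-1}=V\oplus V^{*}$, built from the split real standard representation of $\mathfrak{sl}(n,\mathbb{R})$, which matches the para-splitting $\mathfrak{m}=W\oplus W^{*}$ of every para-pseudo-hermitian space; here $\mathfrak{l}_{-1}=V$ is the single \emph{complex} standard representation of $\mathfrak{su}(p,q)$, so the two families swap roles. By Proposition \ref{pp29} an extension exists exactly when $\mathfrak{k}$ (the semisimple part of $\mathfrak{h}$, enlarged by the symmetry $h$) admits an injective $i\colon K\to L_{0}$ with $V|_{i(K)}\cong\mathfrak{m}$ as real $K$-representations. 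Since $V$ carries the $\mathfrak{su}(p,q)$-invariant complex structure and Hermitian form, I would reduce the whole question to the following: $\mathfrak{m}$ must admit a $\mathfrak{k}$-invariant complex structure whose induced Hermitian form (coming from the Killing form) has signature matching $\mathfrak{su}(p,q)$; then $(\mathfrak{m},J)$ is precisely $V|_{i(K)}$ and $i$ is read off from it.

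For the pseudo-hermitian simple symmetric spaces this holds automatically: by definition $\mathfrak{m}$ already carries an $\mathfrak{h}$-invariant complex structure $J$ coming from the $\mathfrak{u}(1)$-factor of $\mathfrak{h}$, so $(\mathfrak{m},J)$ is a complex $\mathfrak{k}$-representation preserving the pseudo-hermitian form; defining $i$ by the resulting homomorphism $\mathfrak{k}\to\mathfrak{su}(p,q)$ makes $K$ and $i(K)$ isomorphic with $V|_{i(K)}\cong\mathfrak{m}$, and Proposition \ref{pp29} supplies the extension. For para-pseudo-hermitian spaces $\mathfrak{m}=W\oplus W^{*}$ carries only an invariant para-complex structure, and one must decide when a genuine invariant complex structure compatible with the regularity bracket $\mathfrak{l}_{-1}\times\mathfrak{l}_{-1}\to\mathfrak{l}_{-2}$ of Lemma \ref{norcur} also exists. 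This forces $W$ to be self-dual of real (orthogonal) type, so that $\mathfrak{m}=W\oplus W\cong W_{\mathbb{C}}$ with $J$ interchanging the summands and the bracket becoming the imaginary part of a Hermitian form; scanning the table of simple para-pseudo-hermitian symmetric spaces, the unique such entry is $\mathfrak{so}(p+1,q+1)/\mathfrak{so}(p,q)+\mathfrak{so}(1,1)$ with $W=\lambda_{1}$, where one takes $i$ to be the standard real inclusion $\mathfrak{so}(p,q)\hookrightarrow\mathfrak{su}(p,q)$ and $V|_{\mathfrak{so}(p,q)}=(\lambda_{1})_{\mathbb{C}}=\mathfrak{m}$.

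For the uniqueness claim in the para case I would invoke Proposition \ref{pp30}: the semisimple part of $L_{0}$ is the simple group $SU(p,q)$, and $V|_{i(K)}=(\lambda_{1})_{\mathbb{C}}$ is irreducible, so $i'$ is pinned down up to conjugation in $P$ once the isomorphism class of $V|_{i(K)}$ is fixed; as $\lambda_{1}$ is real we have $V|_{i(K)}\cong\overline{V|_{i(K)}}$, so the only residual ambiguity ($V$ versus $V^{*}=\bar V$, in general absorbed by a conjugation morphism $(g,p)\mapsto(\bar g,p)$) is absent, yielding uniqueness of $i'$ up to equivalence and of $i$ itself once $L$ has no normal subgroup inside $P$. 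I expect the genuine difficulty to lie entirely in the \textbf{only} direction: one must go through the classification tables and show that every para-pseudo-hermitian family with $W$ of complex or quaternionic type, or with $W$ not self-dual, fails either because $\mathfrak{m}$ admits no invariant complex structure at all, or because the complex structure it does admit produces a Hermitian form incompatible with the regularity constraint of Theorem \ref{them3} and Lemma \ref{norcur} (equivalently landing $\mathfrak{k}$ in a wrong real form rather than in $\mathfrak{su}(p,q)$), while simultaneously confirming that all three types $\mathbb{R},\mathbb{C},\mathbb{H}$ of pseudo-hermitian spaces do yield an embedding into $\mathfrak{su}(p,q)$ of the correct signature.
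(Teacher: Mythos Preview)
Your approach is essentially the paper's own: for pseudo-hermitian spaces take $i$ to be induced by the given $W:\mathfrak{k}\to\mathfrak{su}(p,q)$; for para-pseudo-hermitian spaces the requirement that $\mathfrak{m}$ fit into the complex $\mathfrak{l}_{-1}=V$ becomes $W^{*}\cong\bar W$ (your ``$W$ self-dual of real type'', equivalent since $W$ is already real), and scanning the table leaves only $\mathfrak{so}(p+1,q+1)/\mathfrak{so}(p,q)+\mathfrak{so}(1,1)$; uniqueness of $i'$ then comes from Proposition~\ref{pp30} because the semisimple part of $L_{0}$ is the simple group $SU(p,q)$.

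The one place where the paper differs from your sketch is in handling the residual $W$-versus-$\bar W$ ambiguity. You dispose of it in the para $\mathfrak{so}$ case by observing that $\lambda_{1}$ is real, so $V|_{i(K)}\cong\overline{V|_{i(K)}}$ and nothing remains to identify. The paper instead treats \emph{all} cases at once (including the pseudo-hermitian ones, where $W\not\cong\bar W$ in general) by exhibiting the explicit bundle morphism $G\times_{W}P\to G\times_{\bar W}P$, $(g,p)\mapsto((g^{-1})^{*},p)$, which carries the extension $(W,\alpha)$ to $(\bar W,-\alpha^{*})$. Your parenthetical ``$(g,p)\mapsto(\bar g,p)$'' is the right instinct but not quite the right formula, since $G$ need not be a complex matrix group; the paper's $(g^{-1})^{*}$ is the appropriate outer automorphism. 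Apart from this, and the fact that you spell out the ``only'' direction in more detail than the paper's one-line ``the same is possible only if $W^{*}\cong\bar W$'', the arguments coincide.
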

\begin{proof}
Let $G/H$ be a non-complex simple symmetric space and let $K$ be the simisimple part of $H$ extended by the symmetry $h$. In the pseudo-hermitian case, the $\mathfrak{k}$ has representation $W$ on $\mathfrak{m}$ for some representation $W: \mathfrak{k}\to \mathfrak{su}(p,q)$ and we can define $i$ by $W$. Then $K$ and $i(K)$ are isomorphic, because $V\circ W=W$. In the para-pseudo-hermitian case, the same is possible only if $W^*\cong \bar{W}$.

Since semisimple part of $L_0$ is simple, we can use proposition \ref{pp30} and we see that $i$ is up to equivalence $W$ or $\bar{W}$. Then we define morphism $G\times_WP\to G\times_{\bar{W}}P$ as $(g,p)\mapsto ((g^{-1})^*,p)$, which maps extension $(W,\alpha)$ on $(\bar{W},-\alpha^*)$, and the claim follows from proposition \ref{pp29}.
\end{proof}

Now we explicitly compute one flat example.

\begin{example}
Extension from $(PSU(p+1,q),U(p,q))$ to $(PSU(p+1,q+1),P)$:

The subgroup $U(p,q)$ is represented by the following matrixes, where the blocks are $(1,n)$ and $B\in U(p,q)$

\[ \left( \begin{array}{cc}
1 & 0  \\
0 & B  \end{array} \right)\] 

The symmetry at $o$ is a left multiplication by the following matrix in $PSU(p+1,q)$, where $E$ is the identity matrix
\[ \left( \begin{array}{cc}
1 & 0  \\
0 & -E  \end{array} \right).\] 

$K$ is the following subgroup, where $A\in SU(p,q)$
\[ \left( \begin{array}{cc}
1 & 0  \\
0 & \pm A  \end{array} \right).\] 

Now $i$ is the following injective homomorphism, which maps $K$ into $P$
\[ \left( \begin{array}{ccc}
1 & 0 & 0 \\
0 & \pm A & 0 \\
0 & 0 & 1 \end{array} \right).\] 

Since both adjoint representations are $\lambda_1$, the only possible homomorphisms are nonzero (complex) multiples. Thus the only possible $\alpha$ are the following, where $a=-Tr(A)$ and $b\in \mathbb{C}$ is nonzero, $c,d \in \mathbb{C}$ and $e\in \mathbb{R}$:
\[ \left( \begin{array}{cc}
a & -\bar{X}^TI \\ 
X & A   \end{array} \right) 
\mapsto 
\left( \begin{array}{ccc}
ca & -\bar{d}\bar{X}^TI & ea \\ 
bX & A+\frac{1-2Re(c)}{n}Ea & dX \\
b\bar{b}a & -\bar{b}\bar{X}^TI & \bar{c}a   \end{array} \right). \]

For fixed $b$, the normality conditions are equivalent to $Re(c)=1/2$, $e=d\bar{d}$, $c=\bar{b}d$. Thus there are four conditions on five variables and the solution is $d=\frac{c}{\bar{b}}, Re(c)=0,e=\frac{c\bar{c}}{b\bar{b}}$ and $Im(c)$ free parameter. Thus if we choose $Im(c)=0$, the resulting $\alpha$ for fixed $b$ is:
\[ \left( \begin{array}{cc}
a & -\bar{X}^TI \\ 
X & A   \end{array} \right) 
\mapsto 
\left( \begin{array}{ccc}
\frac12 a & -\frac{1}{2b}\bar{X}^TI & \frac{1}{4b\bar{b}}a \\ 
bX & A & \frac{1}{2\bar{b}}X \\
b\bar{b}a & -\bar{b}\bar{X}^TI & \frac12 a   \end{array}\right).  \]

Further $\kappa_{\alpha}(X,Y)=0$ for any of these $\alpha$. So they are all isomorphic and locally isomorphic to homogeneous model. We can summarize the result in the following proposition.

\begin{prop}
Up to equivalence, there is an unique regular normal extension from $(PSU(p+1,q),K)$ to CR structure, which is flat.
\end{prop}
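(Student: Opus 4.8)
The plan is to obtain the statement as the terminal step of the four-part strategy (form of $\alpha$, normality, automorphisms, equivalence classes) applied to the simple pseudo-hermitian symmetric space $G/H = PSU(p+1,q)/U(p,q)$, for which the semisimple isotropy together with the symmetry $h$ gives $K$ with $\mathfrak{k} = \mathfrak{su}(p,q)$ acting on $\mathfrak{m}$ by the standard representation $\lambda_1$.

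First I would invoke Proposition \ref{odw2} to guarantee that a compatible inclusion $i: K \to L_0$ exists (and is unique up to equivalence when $L$ has no normal subgroup inside $P$), and then Proposition \ref{pp29} to describe all admissible $\alpha$ for this fixed $i$. Because the isotropy representation on $\mathfrak{m}$ and the representation of $i(K)$ on $\mathfrak{l}_{-1}$ are both the irreducible standard representation $\lambda_1$ of $\mathfrak{su}(p,q)$, which is of complex type, Schur's lemma forces every intertwiner to be a single complex scalar. This is exactly what reduces the $\mathfrak{l}_{\pm 1}$-components of $\alpha$ to the multipliers $b$ and $d$ and the $\mathfrak{l}_0$- and $\mathfrak{l}_{\pm 2}$-components to $c$ and $e$, so that the full family of admissible $\alpha$ is the explicit parametrized one displayed above, with $b \in \mathbb{C}^{*}$.

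Next I would impose the normality conditions in the form given by Lemma \ref{norcur}(4). Solving the resulting scalar system cuts the family down to $e = d\bar d$, $c = \bar b d$ and $\mathrm{Re}(c) = \tfrac12$, leaving only the complex multiplier $b$ and the phase $\mathrm{Im}(c)$ free. The decisive step is flatness: in the extension curvature formula $\kappa_\alpha = \alpha\circ\kappa + [\alpha,\alpha] - \alpha([\,\cdot\,,\cdot\,])$ the underlying geometry is the homogeneous model $G\to G/K$ with its Maurer--Cartan form, so $\kappa = 0$ and $\kappa_\alpha$ measures purely the failure of $\alpha$ to be a Lie algebra homomorphism. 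Since $\dim G/K = 2n+1 = \dim L/P$ and the normalized $\alpha$ realizes, up to the scalings above, the inclusion $\mathfrak{su}(p+1,q)\hookrightarrow\mathfrak{su}(p+1,q+1)$, it is a genuine homomorphism and hence $\kappa_\alpha \equiv 0$.

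Uniqueness up to equivalence then follows cheaply: every normal extension in the family is flat, so by the Liouville theorem (Proposition \ref{authomod}) each is locally isomorphic to the homogeneous model $L/P$ and therefore to every other one; concretely, the residual parameters $b$ and $\mathrm{Im}(c)$ can be removed by a morphism M1) with $A$ in the centralizer of $i(K)$ in $P$, which consists of the center of $L_0$ together with $\mathfrak{l}_2$ (on which $\mathfrak{su}(p,q)$ acts trivially) and is large enough to absorb all three remaining real parameters. I expect the genuine obstacle to be the middle step — verifying that the normalized $\alpha$ is really a Lie algebra homomorphism, i.e.\ that $[\alpha(X),\alpha(Y)] = \alpha([X,Y])$ holds identically and not merely on the harmonic part — since this is where the algebra of $\mathfrak{su}(p+1,q+1)$ has to conspire with the normalization; once $\kappa_\alpha \equiv 0$ is confirmed, both flatness and uniqueness are immediate.
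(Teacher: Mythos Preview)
Your proposal is correct and follows essentially the same route as the paper: parametrize all admissible $\alpha$ via Schur's lemma (a single complex scalar on $\mathfrak{m}$), solve the normality system down to the free parameters $b$ and $\mathrm{Im}(c)$, verify $\kappa_\alpha\equiv 0$, and conclude uniqueness because every normal extension in the family is flat. The only difference is cosmetic---the paper states $\kappa_\alpha=0$ as a direct computation, whereas you recast it as ``$\alpha$ is the Lie-algebra inclusion $\mathfrak{su}(p+1,q)\hookrightarrow\mathfrak{su}(p+1,q+1)$''; as you yourself flag, that rephrasing still requires checking $[\alpha(\cdot),\alpha(\cdot)]=\alpha([\cdot,\cdot])$ (knowing the image is a subalgebra is not enough), which is exactly the paper's computation, and your appeal to Proposition~\ref{authomod} for uniqueness is a slight mis-citation (Liouville concerns automorphisms of the model, not local flatness), but your M1 argument already does the job.
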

\end{example}

In the case $W$ and $\bar{W}$ are not isomorphic as the representations of $K$, then by the Schur lemma only the multiples of identity are isomorphisms. After identification of the representations of $\mathfrak{k}$ and $i(K)$ via $W$, we are in situation of the previous example. Since the symmetric space has now different curvature $R(X,Y)$, and $\kappa_{\alpha}(X,Y)=[\alpha(X),\alpha(Y)]-\alpha(R(X,Y))$, the resulting contact geometry will not be flat. But using morphism M1) we get that again they are all isomorphic. Thus we get the following theorem.

\begin{theorem}
Up to equivalence, there is an unique regular normal extension for any non-complex simple pseudo-hermitian symmetric space with $W\neq\bar{W}$ to CR structure. The extended geometry is flat only in the case of the previous example.
\end{theorem}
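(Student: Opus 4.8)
The plan is to mirror, step for step, the argument used just above for the para-pseudo-hermitian (Lagrangean) theorem, transporting each ingredient to the CR setting by means of Proposition \ref{odw2}. First I would fix the inclusion. By Proposition \ref{odw2}, for a non-complex simple pseudo-hermitian symmetric space $G/H$ the representation $W\colon\mathfrak{k}\to\mathfrak{su}(p,q)$ on $\mathfrak{m}$ determines the infinitesimal inclusion $i'$ (via $V|_{i(K)}\cong W$), and $i$ is unique up to equivalence provided there is no normal subgroup of $L$ in $P$. Thus it suffices to classify the extension maps $\alpha$ for this single fixed $i$, modulo the morphisms M1)--M3).

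Next I would write down the general form of $\alpha$ supplied by Proposition \ref{pp29}: namely $i'$ into $\mathfrak{l}_0$ on $\mathfrak{k}$; on $\mathfrak{m}$ the fixed isomorphism into $\mathfrak{l}_{-1}$ together with a $K$-equivariant map into $\mathfrak{l}_1$; and on $\mathfrak{h}/\mathfrak{k}$ the bracket-determined map into $\mathfrak{l}_{-2}$ forced by regularity (Lemma \ref{norcur}, claim 3), with free data into $\mathfrak{l}_2$ and into the centralizer of $i'(\mathfrak{k})$ in $\mathfrak{l}_0$. The decisive representation-theoretic point is that, since $W\not\cong\bar W$, the module $W$ is irreducible of complex type, so Schur's lemma forces every $K$-morphism among $\mathfrak{m}$, $\mathfrak{l}_{-1}$, $\mathfrak{l}_1$ to be a single complex multiple of the identity. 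After identifying $\mathfrak{m}$ with $\mathfrak{l}_{-1}$ via $W$, the map $\alpha$ therefore acquires exactly the shape computed in the preceding Example, the only freedom being the nonzero complex parameter $b$ and the scalars it determines.

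I would then impose normality through Lemma \ref{norcur}, claim 4). The key observation, and the step I expect to be the main obstacle, is that the normality contractions depend only on the $\mathfrak{su}(p,q)$-equivariant structure of $\alpha$ and not on the curvature $R$ of the underlying symmetric space: the $R$-dependent part of $\kappa_\alpha=[\alpha(\cdot),\alpha(\cdot)]-\alpha(R(\cdot,\cdot))$ lands in components annihilated by the two normality operators, so the normality equations coincide verbatim with those solved in the Example. This gives the same one-complex-parameter solution family, with the remaining scalars expressed through $b$, so the regular normal extensions are parametrized by $b\in\mathbb{C}^{*}$ exactly as before. Checking this genuinely requires care, because $R$ does alter the final curvature; one must verify that it enters $\kappa_\alpha$ only through equivariant pieces that drop out of both normality contractions.

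Finally I would settle flatness and equivalence. By the extension curvature formula, $\kappa_\alpha$ on $\mathfrak{m}$ equals $[\alpha(\cdot),\alpha(\cdot)]-\alpha(R(\cdot,\cdot))$; in the Example these two terms cancel and the geometry is flat, whereas for every other non-complex simple pseudo-hermitian symmetric space with $W\neq\bar W$ the curvature $R$ differs, the cancellation fails, and $\kappa_\alpha\neq0$, which is the second assertion. For uniqueness I would invoke morphism M1): since $i(K)=SU(p,q)$ acts irreducibly, its centralizer in $L_0$ is the center, and conjugation by it rescales $b$, so, together with M2) and M3) to absorb the residual phase, all members of the $b$-family become equivalent; Proposition \ref{lab_1} then confirms that in the non-flat case there are no infinitesimal automorphisms beyond $\alpha(\mathfrak{g})$. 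Hence there is, up to equivalence, a unique regular normal extension, which completes the proof.
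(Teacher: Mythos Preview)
Your proposal is correct and follows essentially the same approach as the paper: Schur's lemma reduces the freedom in $\alpha$ to a single complex multiple, identification via $W$ places you in the situation of the preceding Example, the different curvature $R$ of the symmetric space makes the extension non-flat, and morphism M1) absorbs the parameter $b$ to give uniqueness. The paper's own argument is considerably terser---it simply asserts ``we are in situation of the previous example'' and invokes M1) directly---and does not pause over the normality issue you flag as the main obstacle; your explicit attention to why the normality contractions are unaffected by the change in $R$ is more careful than the paper's treatment, though arguably unnecessary since M1) acts transitively on the $b$-family of regular extensions and preserves normality, so uniqueness of the normal representative follows without re-solving the conditions.
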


Now we investigate the remaining cases with simple group generated by symmetries, where $W$ is self conjugate.

\begin{example}
Extension from $(O(p+2,q),O(p,q)\times O(2))$, $(O(p+1,q+1),O(p,q)\times O(1,1))$ to $(PSU(p+1,q+1),P)$:

The symmetric space and the $i$ are the same as in Lagrangean contact case.

Since there is no complex structure on $K$, we choose two identifications of $W=\lambda_1+\lambda_1$ with complex numbers, i.e. $(X_1,X_2)\mapsto X_1+iX_2=X$ and $(X_1,X_2)\mapsto X_2+iX_1=-i\bar{X}$. Then the isomorphisms of representations is given by complex multiples of those two identifications by $b_1,b_2\neq 0$ such, that $|b_1|\neq |b_2|$. So all the possible $\alpha$ are the following, where $a\in \mathbb{R}$ and $c$ is $1$ in the $O(2)$ case and $-1$ in the $O(1,1)$ case, $I$ is matrix with $p$ entries on diagonal $1$ and remaining $q$ entries $-1$, $c_1,d_1,d_2\in \mathbb{C}$ and $e_1\in \mathbb{R}$:
\[ \left( \begin{array}{ccc}
0 & a & -X_1^TI \\
-c a & 0 & -c X_2^TI \\ 
X_1 & X_2 & A   \end{array} \right) 
\mapsto \]
\[
\left( \begin{array}{ccc}
c_1a & -(\bar{d}_1\bar{X}^T+\bar{d}_2iX^T)I & e_1ai \\ 
b_1X-b_2i\bar{X} & A-\frac{2Im(c_1)}{n}Eai & d_1X-d_2i\bar{X} \\
2(|b_1|^2-|b_2|^2)ai & -(\bar{b}_1\bar{X}^T+\bar{b}_2iX^T)I & -\bar{c_1}a   \end{array} \right). \]

For fixed $b_1,b_2$, the normality conditions are different for $c=1$ and $c=-1$, so we skip the exact form of them. We only mention, that $Re(c_1)$ is a free parameter and we choose $Re(c_1)=0$. The resulting $\alpha$ for fixed $b_1,b_2$ is:

For $c=-1$
\[ \left( \begin{array}{ccc}
0 & a & -X_1^TI \\
a & 0 & X_2^TI \\ 
X_1 & X_2 & A   \end{array} \right) \mapsto
\]
\[ 
\left( \begin{array}{ccc}
\frac{n}{2(n+1)}tai & * & (\frac{-1}{2(|b_1|^2-|b_2|^2)}-\frac{(n+2)(3n+2)}{8(n+1)^2}\frac{t^2}{|b_1|^2-|b_2|^2})ai \\ 
b_1X-b_2i\bar{X} & A-\frac{2}{2(n+1)}tEai & V_1 \\
2(|b_1|^2-|b_2|^2)ai & * & \frac{n}{2(n+1)}tai   \end{array} \right), \]
where $t:=\frac{(b_1,b_2)}{|b_1|^2-|b_2|^2}=2\frac{Re(b_1)Im(b_2)-Re(b_2)Im(b_1)}{|b_1|^2-|b_2|^2}$, entry on $*$ comes from structure of Lie algebra $\mathfrak{su}(p+1,q+1)$ and 
\[V_1=(\frac{ib_2}{2(|b_1|^2-|b_2|^2)}-\frac{n+2}{4(n+1)}\frac{tb_1}{|b_1|^2-|b_2|^2})X\]\[+(\frac{ib_1}{2(|b_1|^2-|b_2|^2)}+\frac{n+2}{4(n+1)}\frac{tb_2}{|b_1|^2-|b_2|^2})i\bar{X}.\]

For $c=1$
\[ \left( \begin{array}{ccc}
0 & a & -X_1^TI \\
-a & 0 & -X_2^TI \\ 
X_1 & X_2 & A   \end{array} \right) 
\mapsto 
\]
\[
\left( \begin{array}{ccc}
\frac{n}{2(n+1)}tai & * & (\frac{1}{2(|b_1|^2-|b_2|^2)}-\frac{(n+2)(3n+2)}{8(n+1)^2}\frac{t^2}{|b_1|^2-|b_2|^2})ai \\ 
b_1X-b_2i\bar{X} & A-\frac{1}{n+1}tEai & V_2 \\
2(|b_1|^2-|b_2|^2)ai & * & \frac{n}{2(n+1)}tai   \end{array} \right), \]
where $t:=\frac{(b_1,b_2)}{|b_1|^2-|b_2|^2}=\frac{|b_1|^2+|b_2|^2}{|b_1|^2-|b_2|^2}$, entry on $*$ comes from structure of Lie algebra $\mathfrak{su}(p+1,q+1)$ and 
\[V_2=(\frac{b_1}{2(|b_1|^2-|b_2|^2)}-\frac{n+2}{4(n+1)}\frac{tb_1}{|b_1|^2-|b_2|^2})X\]\[+(\frac{b_2}{2(|b_1|^2-|b_2|^2)}+\frac{n+2}{4(n+1)}\frac{tb_2}{|b_1|^2-|b_2|^2})i\bar{X}. \]

Explicit computation of the curvature using Maple reveals, that $\kappa=0$ for $t=0$, and $\kappa\neq 0$ otherwise. Using the algorithm from proposition \ref{lab_1} we compute, that the infinitesimal automorphisms for $t\neq 0$ are of the form $\alpha(\mathfrak{g})$, with exception of the case $c=1,\ t=1$, when the infinitesimal automorphisms consists $\alpha(\mathfrak{g})$ and elements of the form $$\left( \begin{array}{ccc}
li &0 & 0 \\ 
0 & -\frac{2l}{n}Ei & 0 \\
0 & 0 & li  \end{array} \right)$$ for $l\in \mathbb{R}$. Further using morphisms M1), M2) and M3) we get that $\alpha$ can be chosen for $c=-1$ with $b_1=\sqrt{\frac{1+\sqrt{t^2+1}}{2}}, b_2=i\sqrt{\frac{-1+\sqrt{t^2+1}}{2}},t>-1$ and for $c=1$ with $b_1=\sqrt{\frac{1+t}{2}}, b_2=\sqrt{\frac{t-1}{2}}i, t\geq 1$. Thus we can summarize:

\begin{theorem}
Up to equivalence, all regular normal extensions from $(O(p+2,q),O(p,q))$ to CR structures form one parameter class for $t\geq 1$.

Up to equivalence, all regular normal extensions from $O(p+1,q+1),O(p,q))$ to CR structures form one parameter class for $t>-1$.
\end{theorem}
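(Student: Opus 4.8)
The plan is to assemble this classification from the explicit computations of the preceding Example, organized around the four guiding questions \textbf{1)}--\textbf{4)}. First I would invoke Proposition \ref{odw2} to isolate the relevant geometries: among the non-complex simple symmetric spaces with \emph{self-conjugate} representation $W$ (complementary to the case $W\neq\bar W$ already settled), the only ones admitting an extension to a CR structure of type $(\mathfrak{su}(p+1,q+1),P)$ are the pseudo-hermitian space $\mathfrak{so}(p+2,q)/\mathfrak{so}(p,q)+\mathfrak{so}(2)$ (the case $c=1$) and the para-pseudo-hermitian space $\mathfrak{so}(p+1,q+1)/\mathfrak{so}(p,q)+\mathfrak{so}(1,1)$ (the case $c=-1$); moreover the inclusion $i$ is unique up to equivalence. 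This fixes $i$ and reduces the problem to classifying the extensions $\alpha$ for this $i$.

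Next I would apply Proposition \ref{pp29} to write the general $\alpha$: since $W=\lambda_1\oplus\lambda_1$ admits exactly the two real-to-complex identifications used above, the $\mathfrak{l}_{-1}$-component is forced to be $b_1X-b_2i\bar X$ with $b_1,b_2\in\mathbb{C}$, $|b_1|\neq|b_2|$, while the components into $\mathfrak{l}_0,\mathfrak{l}_1,\mathfrak{l}_2$ carry the free data $c_1,d_1,d_2,e_1$. Imposing the normality conditions of Lemma \ref{norcur} — whose part (4) records that any such regular normal extension is automatically torsion-free, so that the partially integrable almost CR structure is in fact integrable — determines $d_1,d_2,e_1$ and the imaginary part of $c_1$ in terms of $(b_1,b_2)$, leaving only $\mathrm{Re}(c_1)$ free, which I normalize to $0$. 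This produces the explicit normal $\alpha$ recorded for $c=\pm1$, depending on $(b_1,b_2)$ only through the single real invariant $t$.

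I would then separate flat from non-flat members and control the equivalences. The curvature $\kappa_\alpha=\alpha\circ\kappa+[\alpha,\alpha]-\alpha([\,,\,])$, evaluated with Maple, vanishes exactly when $t=0$; and Proposition \ref{lab_1} shows that for $t\neq0$ every infinitesimal automorphism lies in $\alpha(\mathfrak{g})$, the sole exception being $c=1,\ t=1$, where an additional central infinitesimal automorphism in $\mathfrak{l}_0$ appears. Consequently no hidden symmetries can identify two members of the family, and the only equivalences are those realized by the bundle morphisms \textbf{M1)}, \textbf{M2)}, \textbf{M3)}.

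The final and hardest step is to work out the induced action of \textbf{M1)}--\textbf{M3)} on the pair $(b_1,b_2)$ and to verify that $t$ is a complete orbit invariant. Using $t=\tfrac{|b_1|^2+|b_2|^2}{|b_1|^2-|b_2|^2}$ for $c=1$ and $t=2\tfrac{\mathrm{Re}(b_1)\mathrm{Im}(b_2)-\mathrm{Re}(b_2)\mathrm{Im}(b_1)}{|b_1|^2-|b_2|^2}$ for $c=-1$, I would reduce each $(b_1,b_2)$ to the canonical representatives $b_1=\sqrt{(1+t)/2},\ b_2=i\sqrt{(t-1)/2}$ and $b_1=\sqrt{(1+\sqrt{t^2+1})/2},\ b_2=i\sqrt{(-1+\sqrt{t^2+1})/2}$, respectively. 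The constraint $|b_1|\neq|b_2|$ together with the residual sign-changing morphisms then confines $t$ to $t\geq1$ in the $O(2)$ case and to $t>-1$ in the $O(1,1)$ case, giving the stated one-parameter families. The delicate point here — and the reason the two ranges differ — is that the Maple computation of the \textbf{M1)}--\textbf{M3)} action must be checked to show that $t$ genuinely separates orbits, which reflects the different (complex versus para-complex) structure of the center of $\mathfrak{l}_0$ in the two cases.
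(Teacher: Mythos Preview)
Your proposal is correct and follows essentially the same approach as the paper: both reduce to the explicit computations of the surrounding Example, organized around the four guiding questions \textbf{1)}--\textbf{4)}, and both rely on Proposition~\ref{odw2} for the inclusion, Proposition~\ref{pp29} for the shape of $\alpha$, Lemma~\ref{norcur} for normality, Proposition~\ref{lab_1} for the infinitesimal automorphisms, and the morphisms \textbf{M1)}--\textbf{M3)} to extract the canonical representatives and the parameter ranges $t\geq 1$ and $t>-1$.
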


We remark that in dimension three all homogeneous CR-geometries were found by Cartan in \cite{odk15}. As generalization of the defining functions found by Cartan, we conjecture that in $(O(p+2,q),O(p,q))$ case, the CR-hypersurface is given by equation $$1+\sum_{i=1}^p|z_i|^2-\sum_{i=p+1}^n|z_i|^2+|w|^2=t|1+\sum_{i=1}^pz_i^2-\sum_{i=p+1}^nz_i^2+w^2|$$ in $\mathbb{C}^{n+1}$, and in $O(p+1,q+1),O(p,q))$ case, the CR-hypersurface is given by equation $$1+\sum_{i=1}^p|z_i|^2-\sum_{i=p+1}^n|z_i|^2-|w|^2=t|1+\sum_{i=1}^pz_i^2-\sum_{i=p+1}^nz_i^2-w^2|$$ in $\mathbb{C}^{n+1}$.
\end{example}

\begin{example}
Extension from $(SO^*(2n+2),SO^*(2n)\times SO^*(2))$ to $(PSU(n,n),P)$:

We will not give the explicit form of $i$, the symmetric spaces and explicit computations, which were done using Maple,, but we start already with the $\alpha$. The representation $\lambda_1$ of $SO^*(2n)$ is quaternionic and the isomorphism are of the form 
\[ (f_1,f_2): X=X_1+iX_2+jX_3+kX_4\mapsto (X_1+iX_2,X_3+iX_4)\]
up to right quaternionic multiple. We also skip details on the computation of normality conditions and present the $\alpha$ leading the regular normal extension: 
\[ \left( \begin{array}{cccc}
0 & -X_1^T-iX_2^T & ai & -X_3^T+iX_4^T \\
X_1+iX_2 & A+iB & X_3+iX_4& C+iD \\ 
ai & X_3^T+iX_4^T & 0 & -X_1^T-iX_2^T \\
-X_3+iX_4 & -C+iD & X_1-iX_2& A-iB \\   \end{array} \right) 
\mapsto\]
\[
\left( \begin{array}{cccc}
\frac{nt}{(2n+1)|b|}ai & -f_1(\bar{X}\bar{d})^T & f_2(\bar{X}\bar{d})^T &|d|ai \\
f_1(Xb) & A-Di-\frac{1t}{(2n+1)|b|}aiE & B-Ci& f_1(Xd) \\ 
f_2(Xb) & -B-Ci & A+iD-\frac{1t}{(2n+1)|b|}aiE & f_2(Xd) \\
|b|ai & -f_1(\bar{X}\bar{b})^T & f_2(\bar{X}\bar{b})^T& \frac{nt}{(2n+1)|b|}ai \\   \end{array} \right), 
\]
where $b=b_1+ib_2+jb_3+kb_4\neq 0$, $t=b_1^2-b_2^2-b_3^2+b_4^2$ and 
\[d=\frac{(b_1+kb_4)((2n+1)|b|-(n+1)t)}{(2n+1)|b|^2}+\frac{(ib_2+jb_3)((n+1)|b|-(2n+1)t)}{(2n+1)|b|^2}.\]

The extension is flat for $t=0$ and non-flat otherwise. Using algorithm from proposition \ref{lab_1} we compute, that the infinitesimal automorphisms for $t\neq 0$ are of the form $\alpha(\mathfrak{g})$. Further using morphism M1) and M2) we get that the $\alpha$ can be chosen with $b=\sqrt{\frac{1+t}{2}}+\sqrt{\frac{1-t}{2}}j$.

\begin{theorem}
Up to equivalence, all regular normal extensions from $(SO^*(2n+2),SO^*(2n))$ to CR structures form one parameter class for $t\geq 0$. They are non flat for $t\neq 0$.
\end{theorem}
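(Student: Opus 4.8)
The plan is to follow the four-step strategy---existence, normality, infinitesimal automorphisms, equivalence---set out before Section 4, specialised to the quaternionic situation.

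First I would establish existence and parametrise the candidate extensions. The underlying symmetric space $SO^*(2n+2)/(SO^*(2n)\times SO^*(2))$ is pseudo-hermitian, the adjoint action of the semisimple part $\mathfrak{so}^\star(2n)$ of $\mathfrak{h}$ on $\mathfrak{m}$ being the representation $\lambda_1$ of quaternionic type. By Proposition \ref{odw2} it admits an extension to the CR structure of type $(PSU(n,n),P)$, and since the semisimple part of $\mathfrak{l}_0$ is simple, Proposition \ref{pp30} fixes the inclusion $i$ up to equivalence. With $i$ fixed, Proposition \ref{pp29} describes every admissible $\alpha$: it is $i'$ on $\mathfrak{k}$, an isomorphism of $\mathfrak{k}$-representations $\mathfrak{m}\to\mathfrak{l}_{-1}$ together with a morphism $\mathfrak{m}\to\mathfrak{l}_1$, and on $\mathfrak{h/k}$ the bracket into $\mathfrak{l}_{-2}$ plus free data into $\mathfrak{l}_2$ and into the centraliser of $i'(\mathfrak{k})$ in $\mathfrak{l}_0$. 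Because $\lambda_1$ is quaternionic, an identification $\mathfrak{m}\cong\mathbb{H}^n$ with $\mathfrak{l}_{-1}\cong\mathbb{C}^{2n}$ amounts to a choice of the maps $(f_1,f_2)$ up to a right quaternionic multiple, so the whole $\mathfrak{l}_{-1}$-part of $\alpha$ is encoded by a single nonzero quaternion $b$.

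Next I would impose normality. By Lemma \ref{norcur} the extension is automatically torsion-free, so only the two conditions $\sum_i[Z_i,\kappa(X^{-1},X_i)]=0$ and $\sum_i\kappa([Z_i,X^{-2}],X_i)=0$ survive; solving them (a routine computation, organised in Maple) expresses the $\mathfrak{l}_1$-, $\mathfrak{l}_2$- and $\mathfrak{l}_0$-parts of $\alpha$ in terms of $b$ and one free real parameter $Re(c_1)$. Setting $Re(c_1)=0$ gives the explicit $\alpha$ displayed above, with the auxiliary quaternion $d$ determined by $b$ and by $t=b_1^2-b_2^2-b_3^2+b_4^2$.

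Finally I would pin down the equivalence classes, and here lies the main obstacle. Using Proposition \ref{lab_1}, the holonomy algebra $hol(\nabla^{inf})$ is generated by the curvature $R$ and its iterated brackets with $[\alpha^*,\cdot\,]$; a Maple computation shows that for $t\neq 0$ the only elements of $\mathfrak{l}$ annihilated by $hol(\nabla^{inf})$ lie in $\alpha(\mathfrak{g})$, so no infinitesimal automorphisms arise beyond the image of $\alpha$ and the only equivalences are the bundle morphisms M1), M2) and M3). The delicate point is to reduce the quaternionic parameter $b$ to a canonical form under these morphisms: M1) (conjugation by elements of $P$ centralising $i(K)$) and M2) (translation by the centre of $H$) act on $b$ by quaternionic scalings and rotations that leave $t$ invariant, and I would show that every nonzero $b$ is carried to $b=\sqrt{(1+t)/2}+\sqrt{(1-t)/2}\,j$ with $t\geq 0$, whereas distinct values of $t$ remain inequivalent precisely because $t$ is preserved. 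Non-flatness for $t\neq 0$ is then read off from the extension curvature $\kappa_\alpha=\alpha\circ\kappa+[\alpha,\alpha]-\alpha([\,,\,])$ evaluated through Lemma \ref{norcur}(1)--(2), which vanishes exactly when $t=0$; this completes the classification.
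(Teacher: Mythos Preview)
Your proposal follows essentially the same four-step strategy the paper uses for this example: existence via Propositions \ref{odw2} and \ref{pp29} with $i$ fixed by Proposition \ref{pp30}, parametrisation of $\alpha$ by a nonzero quaternion $b$ through the quaternionic nature of $\lambda_1$, normality solved by Maple leaving the explicit $\alpha$ with $t=b_1^2-b_2^2-b_3^2+b_4^2$, infinitesimal automorphisms computed via Proposition \ref{lab_1} to be exactly $\alpha(\mathfrak{g})$ for $t\neq 0$, and reduction of $b$ to canonical form using M1) and M2). One small slip: you assert that $t$ itself is preserved by the morphisms, yet you also reduce to the range $t\geq 0$; what is actually invariant is $|t|$ (after normalising $|b|$), since one of the morphisms flips the sign of $t$---otherwise negative values of $t$ would give further inequivalent classes, contradicting the statement.
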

\end{example}

The classification in the semisimple case is following.

\begin{theorem}
The only semisimple non-simple symmetric spaces without complex factors allowing extensions to CR structures are semisimple pseudo-hermitian symmetric spaces. For the latter cases, the infinitesimal inclusion $i'$ from proposition \ref{pp29} is unique up to equivalence, and if there is no normal subgroup of $L$ in $P$, then the $i$ is unique up to equivalence.
\end{theorem}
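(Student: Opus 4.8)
The proof will run parallel to that of the corresponding Lagrangean classification, with the roles of the pseudo-hermitian and para-pseudo-hermitian factors interchanged. This reflects the fact that the CR model $(\mathfrak{su}(p+1,q+1),P)$ carries a genuine complex structure on $\mathfrak{l}_{-1}=V$, whereas the Lagrangean model carries the split structure $V\oplus V^*$; consequently it is now the pseudo-hermitian factors that fit the model and the exceptional para-pseudo-hermitian family that must be excluded.

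For existence I would start from a semisimple pseudo-hermitian symmetric space $G/H$ with no complex factors, write it as a product of its simple pseudo-hermitian factors, and build the extension in two stages exactly as in Proposition \ref{odw2}. Each factor carries a complex structure on its $\mathfrak{m}_j$, so the product carries a product hermitian structure, and I first extend the sum of the symmetric spaces to the intermediate structure group $U(p,q)$ acting on $\mathbb{C}^n$ by the standard representation $V$; this intermediate extension is unique up to a complex scalar by Schur's lemma. The passage to the full CR model, and the uniqueness of $i'$ up to equivalence, then follow exactly as in Proposition \ref{odw2}, using Proposition \ref{pp29} together with Proposition \ref{pp30} applied to $P=SU(p,q)$, and uniqueness of $i$ when $L$ has no normal subgroup in $P$ is inherited from the rigidity of the lift.

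For necessity I would assume a CR extension exists and exploit complete reducibility of the representation of $i(K)$ on $\mathfrak{l}_{-1}$. Choosing a basis adapted to the decomposition of $\mathfrak{g}$ into simple symmetric-space factors and restricting $\alpha$ to each factor, the corresponding block with values in $\mathfrak{su}(p_j,q_j)$ produces an extension of that factor to a CR structure of lower dimension; by Proposition \ref{odw2} each factor is then either pseudo-hermitian or the exceptional para-pseudo-hermitian family $\mathfrak{so}(p+1,q+1)/\mathfrak{so}(p,q)+\mathfrak{so}(1,1)$. The step I expect to be the main obstacle is to eliminate the exceptional para-pseudo-hermitian factor as soon as more than one simple factor is present. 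Here I would argue that the center of its isotropy, namely $\mathfrak{so}(1,1)$, acts with real eigenvalues $\pm 1$, so that the structure it induces on $\mathfrak{m}_j$ cannot be aligned with the complex structure that the single line $\mathfrak{h/k}$ must carry across $\mathfrak{l}_{-1}=V$. By Theorem \ref{them3}(4) the line $\mathfrak{h/k}$ lies in the center of $\mathfrak{h}$, and by regularity together with Lemma \ref{norcur}(3) its image $\alpha(\mathfrak{h/k})$ must generate the one-dimensional $\mathfrak{l}_{-2}$ through the contact bracket, which forces $\mathfrak{h/k}$ to project nontrivially onto every factor. Thus $\mathfrak{h/k}$ would be forced to coincide with the $\mathfrak{so}(1,1)$ center of the exceptional factor, which lives in that single factor alone, contradicting that it must meet all factors; this is precisely the mechanism by which the pseudo-hermitian factor was excluded in the Lagrangean argument, with $\pm i$ replaced by $\pm 1$.

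Once every simple factor is forced to be pseudo-hermitian, the product is a semisimple pseudo-hermitian symmetric space, which closes the equivalence. The uniqueness statements for $i'$ and for $i$ then descend from the simple case, since the identification is chosen independently on each factor and each is pinned down up to equivalence by Proposition \ref{pp30}, so that passing to the product introduces no new ambiguity.
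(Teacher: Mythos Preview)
Your proposal is correct and follows essentially the same route as the paper's proof: existence via a two-step extension through the intermediate structure group $U(p,q)$, and necessity by restricting to simple factors and then ruling out the exceptional para-pseudo-hermitian factor via the eigenvalue argument combined with the regularity constraint that $\mathfrak{h}/\mathfrak{k}$ must meet every factor. Your version spells out more explicitly why the $\pm 1$ eigenvalues of $\mathfrak{so}(1,1)$ force $\mathfrak{h}/\mathfrak{k}$ to coincide with that single center and why regularity (via Lemma~\ref{norcur}(3)) then yields the contradiction, but this is just an elaboration of the paper's terse ``the eigenvalues of its center are $\pm 1$ and $H/K$ has to be this center, which is contradiction since due to regularity the $H/K$ intersects all factors.''
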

\begin{proof}
For semisimple pseudo-hermitian symmetric space without complex factors, the extension can be done in two steps. First we take extension from the sum of symmetric spaces to the structure group $U(p,q)$, which acts as standard representation. Then the claim follows in the same way as proposition \ref{odw2}.

Now assume the extension exists. Then for the same reasons as in the Lagrangian case, the simple factors have extension to integrable almost CR structures. Assume that one factor is para-pseudo-hermitian and not pseudo-hermitian, then the eigenvalues of its center are $\pm 1$ and $H/K$ has to be this center, which is contradiction since due to regularity the $H/K$ intersects all factors.
\end{proof}

\section{Extension to contact projective structures}

In this section we construct examples of contact projective structures with a smooth system of symmetries. I.e. we find extensions to Cartan geometry of type $(\mathfrak{sp}(2n+2,\mathbb{R}),P)$ with the following gradation:
\[ \left( \begin{array}{ccc}
\mathfrak{l}_{0} & \mathfrak{l}_{1} & \mathfrak{l}_{2} \\
\mathfrak{l}_{-1} & \mathfrak{l}_{0} & \mathfrak{l}_{1} \\
\mathfrak{l}_{-2} & \mathfrak{l}_{-1} & \mathfrak{l}_{0} \end{array} \right),\]
where the blocks are $(1,2n,1)$ and $AJ+JA^T=0$ for $A\in \mathfrak{sp}(2n+2,\mathbb{R})$, where $J$ is representing the symplectic form $$(x_0,x_i,x_{2n+1})J(y_0,y_i,y_{2n+1})^*=x_0y_{2n+1}+x_{2n+1}y_{0}+\sum_{i=1}^n (x_iy_{n+i}- x_{n+i}y_{i}).$$

The representation of the semisimple part of $\mathfrak{l}_{0}$ on $\mathfrak{l}_{-1}$ is the standard representation of $\mathfrak{sp}(2n,\mathbb{R})$.

\begin{prop}
The only non-complex simple symmetric spaces allowing extensions to contact projective structures are simple para-pseudo-hermitian or pseudo-hermitian symmetric spaces. For these cases, the infinitesimal inclusion $i'$ from proposition \ref{pp29} is unique up to equivalence, and if there is no normal subgroup of $L$ in $P$, then the $i$ is unique up to equivalence.
\end{prop}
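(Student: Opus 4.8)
The plan is to follow the template of Propositions \ref{odw1} and \ref{odw2}, the decisive new feature being that the standard representation $V$ of $\mathfrak{sp}(2n,\mathbb{R})$ carries an invariant \emph{symplectic} form. One inclusion is immediate: by Theorem \ref{them3}, part 5), every non-complex simple symmetric space underlying a regular contact parabolic extension is pseudo-hermitian or para-pseudo-hermitian, which gives that the admissible spaces are contained in these two families. What remains is the converse, namely that \emph{every} non-complex simple symmetric space of either type actually admits an extension, and then the uniqueness of $i'$.

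For the converse I would produce the invariant symplectic form by hand in each case. In the para-pseudo-hermitian case $\mathfrak{m}=W\oplus W^{*}$ with $W\colon\mathfrak{k}\to\mathfrak{sl}(n,\mathbb{R})$ irreducible, and the canonical dual pairing of $W$ with $W^{*}$ is a $\mathfrak{k}$-invariant symplectic form, yielding the Siegel embedding $\mathfrak{sl}(n,\mathbb{R})\hookrightarrow\mathfrak{sp}(2n,\mathbb{R})$; I define $i$ through $W$, so that $V|_{i(K)}\cong W\oplus W^{*}$ reproduces the adjoint action on $\mathfrak{m}$. In the pseudo-hermitian case $\mathfrak{m}$ is complex with an invariant Hermitian form $h$, and the imaginary part $\operatorname{Im}h$ is an invariant real symplectic form, giving $\mathfrak{su}(p,q)\hookrightarrow\mathfrak{sp}(2n,\mathbb{R})$; again $i$ is defined through $W$. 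Since $\dim\mathfrak{m}=\dim(G/H)=2n=\dim\mathfrak{l}_{-1}$ by Theorem \ref{them3}, part 2), the dimensions match and Proposition \ref{pp29} produces the extension. The conceptual point explaining why both families now occur, whereas only one occurred in the Lagrangean and CR cases, is precisely that a symplectic invariant form is available both on $W\oplus W^{*}$ and on any Hermitian $\mathfrak{m}$, without the reality constraints that the $\mathfrak{sl}$- and $\mathfrak{su}$-targets imposed.

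Uniqueness of $i'$ I would deduce from Proposition \ref{pp30}. Here one point is cleaner than before: for $P=Sp(2n,\mathbb{R})$ the inverse-transpose automorphism is inner, since $(g^{-1})^{T}=JgJ^{-1}$ for $g\in Sp(2n,\mathbb{R})$, so the ambiguity $W\leftrightarrow W^{*}$ that required a separate dualizing morphism in Propositions \ref{odw1} and \ref{odw2} is now realized by conjugation inside $Sp(2n,\mathbb{R})$ itself, and no auxiliary morphism is needed. When $V|_{i(K)}$ is irreducible over $\mathbb{R}$ — the pseudo-hermitian cases of complex or quaternionic type — Proposition \ref{pp30} applies directly with $P=Sp(2n,\mathbb{R})$ and yields $i'$ unique up to conjugacy at once. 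The final clause, that $i$ itself is unique once $L$ has no normal subgroup contained in $P$, then follows exactly as in the earlier cases, the remaining discrete ambiguity living in such a normal subgroup.

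The step I expect to be the main obstacle is the uniqueness argument in the \emph{reducible} cases, where $V|_{i(K)}$ splits — the para-pseudo-hermitian case $W\oplus W^{*}$ and the real-type pseudo-hermitian case, in which $\mathfrak{m}$ is the complexification of a real representation — so that Proposition \ref{pp30} cannot be invoked directly for $P=Sp(2n,\mathbb{R})$. My plan there is to observe that when $W\not\cong W^{*}$ the two isotypic components are forced to be complementary Lagrangian subspaces, since by Schur an invariant symplectic form cannot restrict nontrivially to either component; hence $i(K)$ lies in the Levi factor $Sl(n,\mathbb{R})$ of a Siegel parabolic, and using the transitivity of $Sp(2n,\mathbb{R})$ on transverse Lagrangian pairs one normalizes this Levi and applies Proposition \ref{pp30} with $P=Sl(n,\mathbb{R})$ to the irreducible restriction $W$. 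The genuinely delicate sub-case is the self-dual one $W\cong W^{*}$, where extra invariant forms appear and the reduction to a single Siegel Levi must be argued more carefully; this I would settle by hand using the explicit representation $W$ recorded in the classification tables.
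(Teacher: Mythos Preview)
Your approach is correct and runs parallel to the paper's, but is more conceptual and, in one respect, more careful. The paper simply writes down explicit block-matrix formulas for $i'$ in each family (for pseudo-hermitian it is the standard realization of $\mathfrak{u}(p,q)$ inside $\mathfrak{sp}(2n,\mathbb{R})$ via $A+iB\mapsto\begin{psmallmatrix}A&-BI\\IB&IAI\end{psmallmatrix}$, and records $i(h)$ explicitly), then invokes Proposition~\ref{pp30} in one line for uniqueness. Your construction via the canonical symplectic pairing on $W\oplus W^{*}$ and via $\operatorname{Im}h$ on a Hermitian $\mathfrak{m}$ is the same embedding described intrinsically; both routes feed into Proposition~\ref{pp29} identically. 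Your observation that $g\mapsto(g^{-1})^{T}$ is inner in $Sp(2n,\mathbb{R})$ is a nice explanation of why the extra dualizing morphism used in Propositions~\ref{odw1} and~\ref{odw2} is not needed here.

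Where you are more scrupulous than the paper is on the irreducibility hypothesis of Proposition~\ref{pp30}: the paper applies it without comment, but for the para-pseudo-hermitian case and for the real-type pseudo-hermitian case the restriction of the standard $Sp(2n,\mathbb{R})$-module to $i(K)$ is genuinely reducible, so a direct appeal is not justified. Your workaround---forcing the two isotypic pieces to be complementary Lagrangians when $W\not\cong W^{*}$, normalizing the pair by $Sp$-transitivity, and then applying Proposition~\ref{pp30} inside the Siegel Levi $Sl(n,\mathbb{R})$ to the irreducible $W$---is the right fix and closes a gap the paper leaves open. The residual self-dual case $W\cong W^{*}$ (only $\mathfrak{so}(p,q)$ on $\lambda_1$ and $\mathfrak{so}^{*}(2n)$ on $\lambda_1$ among the tabulated spaces) can indeed be handled by hand, and the paper treats these separately anyway as explicit examples rather than via the uniqueness statement.
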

\begin{proof}
Let $G/H$ be a non-complex simple symmetric space and $K$ simisimple part of $H$ extended by $h$. For simple pseudo-hermitian symmetric spaces, the $i'$ is 
\[
\left( \begin{array}{cc}
0& 0 \\ 
0 & A+iB   \end{array} \right) 
\mapsto
 \left( \begin{array}{cccc}
0 & 0 & 0 & 0 \\
0 & A & -BI& 0 \\ 
0 & IB & IAI & 0 \\
0 & 0 & 0& 0 \\   \end{array} \right), 
\]
where $I$ is diagonal matrix given by the signature of the metric as before, $IA+A^TI=0$ and $IB-B^TI=0$. 

For simple para-pseudo-hermitian symmetric spaces, the $i'$ is inclusion of $\mathfrak{so}(n,n)$ as a subgroup.

The element $i(h)$ is 
\[ \left( \begin{array}{cccc}
-1 & 0 & 0 & 0 \\
0 & E & 0& 0 \\ 
0 & 0 & E & 0 \\
0 & 0 & 0& -1 \\   \end{array} \right).
\]

Then the representations of $K$ and $i(K)$ are isomorphic and the extension exists from proposition \ref{pp29}. Since semisimple part of $L_0$ is simple, we can use proposition \ref{pp30} and we see that $i$ is unique up to equivalence.
\end{proof}

In the same way as for the previous types of geometries, we conclude the following theorem. We consider representation $W$ as in Theorems \ref{odw1} or \ref{odw2}.

\begin{theorem}
If the representation $W$ is not self dual in the para-pseudo-hermitian case or not self-conjugate in the pseudo-hermitian case, then there is (up to equivalence) unique regular normal contact projective structure for this non-complex simple (para)-pseudo-hermitian symmetric space.
\end{theorem}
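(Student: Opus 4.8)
The plan is to mimic exactly the argument already used for the Lagrangean and CR cases, exploiting that the non-self-dual (resp. non-self-conjugate) hypothesis collapses the freedom in the choice of $\alpha$ to a single scaling. First I would invoke the preceding proposition: by Proposition \ref{pp29} an extension $(\alpha,i)$ exists, and the infinitesimal inclusion $i'$ is unique up to equivalence, so I may fix $i$ once and for all and vary only $\alpha$. The hypothesis enters through Schur's lemma. When $W$ is not isomorphic to $W^*$ in the para-pseudo-hermitian case (resp. not isomorphic to $\bar W$ in the pseudo-hermitian case), the only isomorphisms between the adjoint representation of $\mathfrak{k}$ on $\mathfrak{m}$ and the representation of $i(\mathfrak{k})$ on $\mathfrak{l}_{-1}$ are the nonzero scalar multiples of one fixed identification. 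Hence the component of $\alpha$ with values in $\mathfrak{l}_{-1}$ depends on a single parameter $b$, real in the para case and complex in the pseudo case, exactly as in the flat model examples of the previous two sections.

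Next I would determine which such $\alpha$ yield normal geometries. Following Proposition \ref{pp29}, the remaining components of $\alpha$ (with values in $\mathfrak{l}_1$, in $\mathfrak{l}_2$, and in the centralizer of $i'(\mathfrak{k})$ in $\mathfrak{l}_0$) are morphisms of the adjoint representation, again forced by Schur's lemma to be scalar multiples of fixed maps. Imposing the normality conditions of Lemma \ref{norcur}, namely $\sum_i [Z_i,\kappa(X^{-1},X_i)]=0$ and $\sum_i \kappa([Z_i,X^{-2}],X_i)=0$, fixes all of these coefficients in terms of $b$ up to one overall normalization, precisely as in the flat computations for Lagrangean and CR structures. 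Since by Lemma \ref{norcur} the regular normal extension is automatically torsion-free, the only surviving curvature is the $\mathfrak{l}_0$-valued part, given through $\kappa_\alpha(X,Y)=[\alpha(X),\alpha(Y)]-\alpha(R(X,Y))$ with $R$ the curvature of the symmetric space; this makes the extended geometry non-flat in general but is irrelevant to the classification up to equivalence.

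Finally I would eliminate the scaling parameter using the morphism M1). Conjugation by an element $A\in P$ commuting with $i(K)$ sends $\omega_\alpha$ to $\omega_{Ad(A^{-1})\circ\alpha}$, and choosing $A$ in the center of $L_0$ rescales the $\mathfrak{l}_{-1}$-component of $\alpha$; this lets me normalize $b$ to a single standard value, so that all normal extensions are mutually equivalent. The main obstacle is exactly the point where the non-self-dual hypothesis is indispensable: in the self-dual (resp. self-conjugate) case the space of representation isomorphisms is no longer one-dimensional but carries several parameters $b_1,b_2,b_3,b_4$, and M1)--M3) reduce these only to a genuine one-parameter invariant (the quantity $t$ appearing in Theorem \ref{5.6} and the analogous CR examples), producing inequivalent geometries. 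Under the present hypothesis this extra freedom is absent, and checking that the single remaining scaling is always removable by M1) completes the proof.
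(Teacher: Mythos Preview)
Your proposal is correct and follows essentially the same approach as the paper, which simply states ``In the same way as for the previous types of geometries, we conclude the following theorem'' and refers back to the Lagrangean and CR arguments you have spelled out. The only minor inaccuracy is your remark that the surviving curvature is purely $\mathfrak{l}_0$-valued (by Lemma~\ref{norcur} there can also be $\mathfrak{l}_1$- and $\mathfrak{l}_2$-components), but as you note this is irrelevant to the uniqueness argument.
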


Now we compute the simple examples, where $W$ is self-dual or self-conjugate.

\begin{example}
Extension from $(O(p+2,q),O(p,q)\times O(2))$, $(O(p+1,q+1),O(p,q)\times O(1,1))$ to $(PSp(2n+2,\mathbb{R}),P)$:

The symmetric spaces are the same as in the case of the previous structures. All possible $\alpha$ are: 
\[ \left( \begin{array}{ccc}
0 & a & -X_1^TI \\
-ca & 0 & -cX_2^TI \\ 
X_1 & X_2 & A   \end{array} \right) 
\mapsto\]
\[
\left( \begin{array}{cccc}
c_1a & * & * & e_1a \\
b_1X_1+b_2X_2 & A+c_2aE & gaI& d_3X_1+d_4X_2 \\ 
b_3X_1I+b_3X_2I & haI & IAI-c_2aE & -d_1X_1I-d_2X_2I \\
2(b_1b_4-b_2b_3)a & * & *& -c_1a \\   \end{array} \right),
\]
where entries on $*$ comes from structure of the Lie algebra $\mathfrak{sp}(2n+2,\mathbb{R})$ and all coefficients are real numbers such, that $b_1b_4-b_2b_3\neq 0$.

For fixed $b$'s, the normality conditions give us, that $c_1$ can be chosen as free parameter and remaining parameters are dependent. Using the morphisms M1) and M2), we get, that all choices of $b$'s are isomorphic. So we get the following result:

\begin{theorem}
Up to equivalence, there is the unique regular normal extension from $(O(p+2,q),O(p,q))$ or $(O(p+1,q+1),O(p,q))$ to contact projective structures given by:
\[ \left( \begin{array}{ccc}
0 & a & -X_1^TI \\
-ca & 0 & -cX_2^TI \\ 
X_1 & X_2 & A   \end{array} \right) 
\mapsto
\left( \begin{array}{cccc}
0 & \frac{-n}{2(n+1)}(X_1I)^T & \frac{-cn}{2(n+1)}X_2^T & \frac{-2cn^2}{4(n+1)^2}a \\
X_1 & A & \frac{1}{n+1}aI& \frac{-cn}{2(n+1)}X_2 \\ 
X_2I & \frac{-1}{n+1}aI & IAI & \frac{n}{2(n+1)}X_1I \\
2a & (X_2I)^T & -X_1^T& 0 \\   \end{array} \right)
\]
with curvature 
\[ \kappa(\left( \begin{array}{ccc}
0 & a & -X_1^TI \\
-ca & 0 & -cX_2^TI \\ 
X_1 & X_2 & A   \end{array} \right),
 \left( \begin{array}{ccc}
0 & b & -Y_1^TI \\
-cb & 0 & -cY_2^TI \\ 
Y_1 & Y_2 & B   \end{array} \right) )
=\]
\[
\left( \begin{array}{cccc}
0 & 0 & 0 & 0 \\
0 & R_1 & -cR_3I-R_2I& 0 \\ 
0 &  IR_3+IR_2 & IR_1I & 0 \\
0 & 0 & 0& 0 \\   \end{array} \right),
\]
where \[ R_1=\frac{n+2}{2(n+1)}(X_1Y_1^T-Y_1X_1^T+cX_2Y_2^T-cY_2X_2^T),  \]
\[R_2=\frac{1}{(n+1)}(X_2^TY_1-X_1^TY_2),\]
\[R_3=\frac{n}{2(n+1)}(X_1Y_2^T+Y_2X_1^T-X_2Y_1^T-Y_1X_2^T). \]
\end{theorem}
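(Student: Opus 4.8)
The plan is to run the same three-step scheme as in the two preceding examples, reading the answer off the general $\alpha$ displayed just above. First I would take that general form, in which the extension conditions (i)--(iii) of Proposition \ref{pp29} and the $Ad(i(h))$-grading are already built in; the only free data are the invertible block $(b_1,b_2,b_3,b_4)$ realizing the isomorphism $\mathfrak{m}\cong\mathfrak{l}_{-1}$, together with the scalars $c_1,c_2,e_1,g,h$ and the $\mathfrak{l}_1$-coefficients.

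Next I would impose the normality conditions of Lemma \ref{norcur}(4). Since the extension is automatically torsion-free, these collapse to $\sum_i[Z_i,\kappa(X^{-1},X_i)]=0$ and $\sum_i\kappa([Z_i,X^{-2}],X_i)=0$, which for fixed $b$'s are linear in the remaining unknowns. Solving them expresses $c_2,e_1,g,h$ and the $\mathfrak{l}_1$-coefficients through the $b$'s and $c_1$, leaving $c_1$ as the single surviving free parameter, exactly as asserted before the statement.

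The decisive step is to collapse this residual freedom to one equivalence class, and it is here that the symplectic case genuinely departs from the Lagrangean and CR ones. I would use M1) and M2). The key point is that the centralizer of $i(K)=O(p,q)$ inside the semisimple part $Sp(2n,\mathbb{R})$ of $L_0$ is a full $SL(2,\mathbb{R})$: writing $\mathfrak{l}_{-1}\cong\mathbb{R}^2\otimes\mathbb{R}^n$ with $O(p,q)$ acting on the second factor, the invariant symplectic form is $\omega_2\otimes g$ with $g$ the $O(p,q)$-form and $\omega_2$ an area form, and the multiplicity-space $GL(2,\mathbb{R})$ preserves it precisely on $SL(2,\mathbb{R})$. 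Hence M1) acts on the matrix $(b_1,b_2,b_3,b_4)\in GL(2,\mathbb{R})$ by left multiplication by this $SL(2,\mathbb{R})$ and by the central grading rescaling, while M2) acts by right multiplication by the factor $O(2)$ resp.\ $O(1,1)$ of $H$. Left $SL(2,\mathbb{R})$-translation together with the scaling already exhausts $GL(2,\mathbb{R})$ up to the discrete sign of the determinant, so the $b$-action is transitive and no continuous invariant analogous to $t=\gamma/\delta$ survives. The remaining parameter $c_1$ is then killed by one more instance of M1): conjugating by $\exp(\mathfrak{l}_2)$ commutes with $i(K)$ and feeds the nonzero $\mathfrak{l}_{-2}$-component $2(b_1b_4-b_2b_3)$ of $\alpha(\mathfrak{h/k})$ into $\mathfrak{l}_0$, so $c_1$ can be shifted to $0$. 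I expect verifying this transitivity --- and contrasting it with the $\mathfrak{sl}$ and $\mathfrak{su}$ cases, where the two summands of $\mathfrak{l}_{-1}$ are inequivalent under the semisimple part and the centralizer shrinks to a torus leaving the invariant $t$ --- to be the main conceptual obstacle.

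Finally, substituting the canonical $(b_1,b_2,b_3,b_4)$ and $c_1=0$ into the general $\alpha$ yields the explicit matrix in the statement, and evaluating $\kappa_\alpha$ on $\mathfrak{l}_{-1}\times\mathfrak{l}_{-1}$ via Lemma \ref{norcur}(1), i.e.\ as $[\alpha(\alpha^{-1}(X)),\alpha(\alpha^{-1}(Y))]-\alpha([\alpha^{-1}(X),\alpha^{-1}(Y)])$, produces $R_1,R_2,R_3$; this last step is routine bookkeeping with the symplectic form $J$ and the signature matrix $I$.
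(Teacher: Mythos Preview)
Your proposal is correct and follows the paper's own route: set up the general $\alpha$, impose normality to leave only $c_1$ free, then use M1) and M2) to collapse all choices of the $b$'s to a single class. The paper states only that ``using the morphisms M1) and M2), we get that all choices of $b$'s are isomorphic'' (with Maple in the background), so your $SL(2,\mathbb{R})$-centralizer explanation of why no continuous invariant $t$ survives in the symplectic case is a welcome elaboration of the same argument rather than a different approach.
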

\end{example}

\begin{example}
Extension from $(SO^*(2n+2),SO^*(2n)\times SO^*(2))$ to $(PSp(2n+2,\mathbb{R}),P)$:

Technical computations using Maple lead to the following theorem. We skip the exact form of the symmetric spaces. The representation $\lambda_1$ of $SO^*(2n)$ is quaternionic and the isomorphism is 
\[ X_1+iX_2+jX_3+kX_4\mapsto (X_1,X_2,X_3,X_4) \]
up to a quaternionic multiple. We also skip the details on computation of normality conditions and computation of automorphisms and isomorphisms here. 

\begin{theorem}
Up to equivalence, there is unique regular normal extensions from $(SO^*(2n+2),SO^*(2n))$ to contact projective structures given by:
\[ \left( \begin{array}{cccc}
0 & -X_1^T-iX_2^T & ai & -X_3^T+iX_4^T \\
X_1+iX_2 & A+iB & X_3+iX_4& C+iD \\ 
ai & X_3^T+iX_4^T & 0 & -X_1^T-iX_2^T \\
-X_3+iX_4 & -C+iD & X_1-iX_2& A-iB \\   \end{array} \right) 
\mapsto\]
\[
\left( \begin{array}{cccccc}
0 & *& * & *& * & -\frac{4n^2}{(2n+1)^2}a \\
X_1 & A& -B& -D_1& -C& -\frac{2n}{(2n+1)}X_3 \\ 
X_2 & B& A& -C& D_1& -\frac{2n}{(2n+1)}X_4 \\
X_3 & D_1& C& A& -B&\frac{2n}{(2n+1)}X_1\\ 
-X_4 & C& -D_1& B& A & \frac{-2n}{(2n+1)}X_2 \\
a & * & *& * & *& 0 \\   \end{array} \right),
\]
where $D_1=D-\frac{a}{(2n+1)}E$ and entries on $*$ comes from structure of the Lie algebra $\mathfrak{sp}(2n+2,\mathbb{R})$, with curvature
\[ \kappa(\left( \begin{array}{cccc}
0 & -X_1^T-iX_2^T & ai & -X_3^T+iX_4^T \\
X_1+iX_2 & 0 & X_3+iX_4& 0 \\ 
ai & X_3^T+iX_4^T & 0 & -X_1^T-iX_2^T \\
-X_3+iX_4 & 0 & X_1-iX_2& 0 \\   \end{array} \right),\]
\[
\left( \begin{array}{cccc}
0 & -Y_1^T-iY_2^T & bi & -Y_3^T+iY_4^T \\
Y_1+iY_2 & 0 & Y_3+iY_4& 0 \\ 
bi & Y_3^T+iY_4^T & 0 & -Y_1^T-iY_2^T \\
-Y_3+iY_4 & 0 & Y_1-iY_2& 0 \\   \end{array} \right) )
=\]
\[
\left( \begin{array}{cccccc}
0 & 0 & 0& 0 & 0 & 0 \\
0 & R1 & R3& R5 & R7 & 0 \\
0 & R4 & R2& R8 & R6 & 0 \\
0 & -R5 & R7& R1 & -R3 & 0 \\
0 & R8 & -R6 & -R4 & R2 & 0 \\
0 & 0 & 0& 0 & 0 & 0 \\  \end{array} \right),
\]
where 
\[ R1=\frac{1}{(2n+1)}(X_1Y_1^T-Y_1X_1^T+X_4Y_4^T-Y_4X_4^T)-(X_2Y_2^T-Y_2X_2^T+X_3Y_3^T-Y_3X_3^T), \] 
\[ R2=(X_1Y_1^T-Y_1X_1^T+X_4Y_4^T-Y_4X_4^T)-\frac{1}{(2n+1)}(X_2Y_2^T-Y_2X_2^T+X_3Y_3^T-Y_3X_3^T), \]
\[ R3=-\frac{1}{(2n+1)}(X_1Y_3^T-Y_1X_3^T+X_4Y_2^T-Y_4X_2^T)-(X_2Y_4^T-Y_2X_4^T+X_3Y_1^T-Y_3X_1^T), \]
\[ R4=(X_1Y_3^T-Y_1X_3^T+X_4Y_2^T-Y_4X_2^T)+\frac{1}{(2n+1)}(X_2Y_4^T-Y_2X_4^T+X_3Y_1^T-Y_3X_1^T), \]
\[ R5=\frac{1}{(2n+1)}(X_1Y_4^T-Y_1X_4^T-X_4Y_1^T+Y_4X_1^T)-(X_2Y_3^T-Y_2X_3^T-X_3Y_2^T+Y_3X_2^T)\]\[-\frac{2}{(2n+1)}(X_1^TY_4-X_4^TY_1+X_2^TY_3-X_3^TY_2)E, \]
\[ R6=-(X_1Y_4^T-Y_1X_4^T-X_4Y_1^T+Y_4X_1^T)+\frac{1}{(2n+1)}(X_2Y_3^T-Y_2X_3^T-X_3Y_2^T+Y_3X_2^T)\]\[-\frac{2}{(2n+1)}(X_1^TY_4-X_4^TY_1+X_2^TY_3-X_3^TY_2)E, \]
\[ R7=\frac{1}{(2n+1)}(X_1Y_2^T-Y_1X_2^T-X_4Y_3^T+Y_4X_3^T)+(X_2Y_1^T-Y_2X_1^T-X_3Y_4^T+Y_3X_4^T), \]
\[ R8=-(X_1Y_2^T-Y_1X_2^T-X_4Y_3^T+Y_4X_3^T)-\frac{1}{(2n+1)}(X_2Y_1^T-Y_2X_1^T-X_3Y_4^T+Y_3X_4^T).\]
\end{theorem}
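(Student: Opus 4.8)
The plan is to follow the four-step scheme set out after the morphism lemma: produce the general admissible extension $\alpha$, cut it down by normality, identify the symmetry algebra, and then normalize by the morphisms M1)--M3). First I would apply Proposition \ref{pp29} together with the preceding proposition of this section, which already guarantees that the infinitesimal inclusion $i'$ (and, in the absence of a normal subgroup of $L$ in $P$, the homomorphism $i$ itself) is unique up to equivalence. Since the representation $\lambda_1$ of $\mathfrak{so}^\star(2n)$ is quaternionic, the identification of the adjoint action on $\mathfrak m$ with $\mathfrak{l}_{-1}$ is determined only up to a right quaternionic multiple, so the $\mathfrak{l}_{-1}$-part of $\alpha$ is governed by a nonzero quaternion $b=b_1+ib_2+jb_3+kb_4$. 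By Proposition \ref{pp29} the $\mathfrak k$- and the $\mathfrak{h/k}\to\mathfrak{l}_{-2}$-components are then forced, while the components into $\mathfrak{l}_1$ and $\mathfrak{l}_2$ remain free morphisms of representations; this produces the general block form of $\alpha$, with the starred entries fixed by the structure of $\mathfrak{sp}(2n+2,\mathbb R)$.

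Next I would impose the normality conditions of Lemma \ref{norcur}, part 4, namely $\sum_i[Z_i,\kappa(X^{-1},X_i)]=0$ and $\sum_i\kappa([Z_i,X^{-2}],X_i)=0$, after using Lemma \ref{norcur}, part 1, to express $\kappa$ through $\alpha$ and the symmetric-space curvature $R$. These become linear and quadratic equations in the free $\mathfrak{l}_1,\mathfrak{l}_2$-coefficients, and solving them (the step delegated to Maple) determines those coefficients in terms of $b$, introducing the scalar $t=b_1^2-b_2^2-b_3^2+b_4^2$ and the auxiliary quaternion $d$. Regularity needs no separate argument: by Lemma \ref{norcur}, part 3, it is equivalent to the $\mathfrak{l}_{-2}$-component of $\alpha$ being the bracket on $\mathfrak{l}_{-1}$, which is built into the construction of Proposition \ref{pp29}.

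I would then compute the symmetry algebra via Proposition \ref{lab_1}: the image $\alpha(\mathfrak g)$ always consists of infinitesimal automorphisms by its part 2), and evaluating $hol(\nabla^{inf})$ (again by Maple) shows there are no further ones for $t\neq 0$, so the group generated by the symmetries is the simple $SO^\star(2n+2)$ and the geometry is non-flat. Finally, to obtain uniqueness I would push the quaternion $b$ to canonical form using the morphisms M1) (conjugation by $A\in P$ centralizing $i(K)$) and M2) (right translation by the center of $H$); in the symplectic target these act transitively enough on the admissible $b$ that, unlike the CR and Lagrangean analogues, the parameter $t$ is entirely absorbed and a single representative survives. Substituting this canonical $\alpha$ into $\kappa_\alpha=\alpha\circ\kappa+[\alpha,\alpha]-\alpha([\ ,\ ])$ then yields the displayed curvature with the entries $R1,\dots,R8$.

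The step I expect to be the genuine obstacle is this last reduction: showing that in the $\mathfrak{sp}(2n+2,\mathbb R)$ case the morphisms M1) and M2) really remove all of the $b$-freedom, so that the extension is literally unique rather than a one-parameter family. This rests on the centralizer of $i(\mathfrak{so}^\star(2n))$ inside the symplectic algebra being large enough to act transitively on the relevant quaternions, and confirming it means tracking the quaternionic identification carefully through the symplectic form. By comparison, the normality solution and the verification of the explicit curvature formula, though lengthy, are routine once the correct $\alpha$ is fixed.
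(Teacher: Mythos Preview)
Your proposal is essentially the paper's own approach. The paper merely says ``Technical computations using Maple lead to the following theorem\ldots\ We also skip the details on computation of normality conditions and computation of automorphisms and isomorphisms here,'' so the four-step scheme you outline (general $\alpha$ via Proposition~\ref{pp29}, normality via Lemma~\ref{norcur}, automorphisms via Proposition~\ref{lab_1}, normalization via M1)--M3)) is exactly what is being suppressed, and your identification of the transitivity of M1)--M2) on the quaternionic parameter $b$ as the decisive point matches the parallel $O(p+2,q)$ contact-projective example, where the paper explicitly says ``Using the morphisms M1) and M2), we get, that all choices of $b$'s are isomorphic.''

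One small caution: the specific quantities $t=b_1^2-b_2^2-b_3^2+b_4^2$ and the auxiliary quaternion $d$ that you import are artifacts of the CR target $\mathfrak{su}(n,n)$, not of the symplectic one. In the contact-projective setting the centralizer of $i(\mathfrak{so}^\star(2n))$ in $\mathfrak{sp}(4n,\mathbb R)$ is larger (it contains a full copy of $\mathfrak{sp}(2,\mathbb R)$ acting on the quaternionic structure, rather than the smaller piece available in $\mathfrak{su}(n,n)$), and this is precisely why no invariant like $t$ survives; you should expect the normality solution to be parametrized directly by $b$ without an intermediate $t$, and then M1)--M2) to kill $b$ outright. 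So do not carry over the CR bookkeeping literally---set up the symplectic centralizer first, and the absence of a residual parameter will be visible before you ever solve the normality equations.
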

\end{example}

The classification in the semisimple case is the following:

\begin{theorem}
The only semisimple symmetric spaces without complex factors allowing extensions to contact projective structures are sums of simple (para)-pseudo-hermitian symmetric spaces. For the latter cases, the infinitesimal inclusion $i'$ from proposition \ref{pp29} is unique up to equivalence, and if there is no normal subgroup of $L$ in $P$, then the $i$ is unique up to equivalence.
\end{theorem}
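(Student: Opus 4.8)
The plan is to follow the two semisimple theorems already established for Lagrangean contact and CR structures, proving both inclusions. For the converse direction (that only sums of (para)-pseudo-hermitian factors occur) I would assume an extension $(\alpha,i)$ exists. Since $i(K)\subset L_0$ acts completely reducibly on $\mathfrak{l}_{-1}$, the standard representation of $\mathfrak{sp}(2n,\mathbb{R})$, the invariant splitting of $\mathfrak{l}_{-1}$ into the summands $\mathfrak{m}_j$ is by symplectic subspaces (each either symplectic, in the pseudo case, or a Lagrangean pair $W_j\oplus W_j^*$, in the para case). Restricting $\alpha$ and $i$ to the block on which a single simple factor $G_j/H_j$ acts gives, via Theorem~\ref{them3} and Lemma~\ref{norcur}, an extension of that factor to a contact projective structure of lower dimension. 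By the simple classification above, each factor is therefore para-pseudo-hermitian or pseudo-hermitian.

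For the construction direction I would run the extension in two steps, exactly as in the proofs referenced by propositions~\ref{odw1} and~\ref{odw2}. First I extend the homogeneous model of the sum $G/H=\prod_j G_j/H_j$ to the reductive structure group whose semisimple part is $\mathfrak{sp}(2n,\mathbb{R})$: each factor representation $W_j$ is self-dual (para case) or self-conjugate (pseudo case), so the product embeds into $Sp(2n,\mathbb{R})$ preserving the symplectic form, and this embedding is unique up to a per-factor (para)-complex multiple by proposition~\ref{pp30}. The subsequent passage to the full parabolic geometry of type $(L,P)=(Sp(2n+2,\mathbb{R}),P)$ is then exactly proposition~\ref{pp29}, which simultaneously supplies regularity. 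Uniqueness of $i'$ up to equivalence again comes from proposition~\ref{pp30} applied factorwise, and the lift to the group homomorphism $i$ is rigid once $L$ has no normal subgroup inside $P$.

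The point where the symplectic case genuinely differs from the two earlier ones---and the step I expect to require the most care---is precisely the \emph{absence} of any exclusion. In the $\mathfrak{sl}$ and $\mathfrak{su}$ cases one ruled out the ``wrong'' factor type by observing that the generator of $\mathfrak{h/k}$, which by Theorem~\ref{them3} (4) lies in the centre of $\mathfrak{h}$ and must, by regularity, project nontrivially onto every factor, had to act on $\mathfrak{l}_{-1}$ with eigenvalues of a single fixed kind (real for $\mathfrak{sl}$, complex for $\mathfrak{su}$). Here I must instead verify that this central generator admits a consistent image in $\mathfrak{l}_0$ for an \emph{arbitrary} mixture of para and pseudo factors: the key observation is that both the para-complex structure (eigenvalues $\pm1$, via $GL(n,\mathbb{R})\subset Sp(2n,\mathbb{R})$) and the complex structure (eigenvalues $\pm i$, via $U(p,q)\subset Sp(2n,\mathbb{R})$) sit inside the centraliser of $i'(\mathfrak{k})$ in $\mathfrak{l}_0$ and are compatible with one and the same symplectic form. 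Hence no contradiction arises, mixed sums survive, and the regularity condition of Lemma~\ref{norcur} (3)---that the $\mathfrak{l}_{-2}$ part of $\alpha$ be given by the bracket on $\mathfrak{l}_{-1}$---can be met on all factors at once.
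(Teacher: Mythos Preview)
Your approach is essentially the paper's: the construction proceeds by embedding the semisimple part of $\mathfrak{h}$ factorwise and then observing that the central generators of $\mathfrak{h}$ (carrying either real eigenvalues $\pm 1$ or imaginary eigenvalues $\pm i$ on the various $\mathfrak{m}_j$) can all be realised inside the centraliser of $i'(\mathfrak{k})$ in $\mathfrak{l}_0\cong\mathfrak{sp}(2n,\mathbb{R})\oplus\mathbb{R}$; the paper states this as ``any multiplication on invariant subspaces of $\mathfrak{l}_{-1}$ can be obtained by bracket with an element of $\mathfrak{l}_0$ which commutes with the image of the semisimple part of $\mathfrak{h}$''. For the converse you do more work than needed: Theorem~\ref{them3}~(5) already forces every simple factor to be (para)-pseudo-hermitian, so no factorwise restriction argument is required here (it was needed in the Lagrangean and CR theorems only to \emph{exclude} one of the two types).

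One small correction: your claim that ``each factor representation $W_j$ is self-dual (para case) or self-conjugate (pseudo case)'' is not the reason the embedding into $Sp(2n,\mathbb{R})$ works, and is not true in general (most $W_j$ in the classification tables are not self-dual). What is true and sufficient is that each $\mathfrak{m}_j$---which equals $W_j\oplus W_j^*$ in the para case or carries an invariant hermitian form in the pseudo case---always admits a $K_j$-invariant symplectic form, namely the restriction of the bracket $\mathfrak{l}_{-1}\times\mathfrak{l}_{-1}\to\mathfrak{l}_{-2}$.
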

\begin{proof}
Apart the center of $\mathfrak{h}$ the extension can be taken as in previous examples. If we have in mind, that any multiplication on invariant subspaces of $\mathfrak{l}_{-1}$ can be obtained by bracket with an element of  $\mathfrak{l}_{0}$, which commutes with image of semisimple part of $\mathfrak{h}$, then image of center of $\mathfrak{h}$ can be chosen to be such elements with appropriate action. The $\mathfrak{h}/\mathfrak{k}$ is then a sum of preimages of $\mathfrak{l}_{-2}$ parts of the relevant previous examples.
\end{proof}


\begin{thebibliography}{odk11}
\bibitem[1]{odk9} Berger, M., \textit{Les espaces symetriques non compacts}, Ann. Sci. École Norm. Sup. (3), 74, 1957, 85–177. \\
\bibitem[2]{odk11} Bertram, W., \textit{The geometry of Jordan and Lie structures}, Lecture Notes in Mathematics, 1754. Springer-Verlag, Berlin, 2000.\\ 
\bibitem[3]{odk6} \v Cap, A.; Slovak, J., \textit{Parabolic geometries. I. Background and general theory}, Mathematical Surveys and Monographs, 154. American Mathematical Society, Providence, RI, 2009.\\
\bibitem[4]{odk15} Cartan, E., \textit{Sur la geometrie pseudo-conforme des hypersurfaces de l’espace de deux variables complexes II}, Ann. Scuola Norm. Sup. Pisa Cl. Sci. (2) 1 (1932), no. 4, 333–354.\\
\bibitem[5]{odk14} Doubrov, B.; Komrakov B.; Morimoto T., \textit{Equivalence of holonomic differential equations}, Lobachevskii J. Math. 3 (1999), 39–71.\\ 
\bibitem[6]{odk10} Hammerl, M., \textit{Homogeneous Cartan geometries}, Arch. Math. (Brno) 43 (2007), no. 5, 431–442. \\
\bibitem[7]{odk2} Helgason, S., \textit{Differential geometry, Lie groups, and symmetric spaces}, Pure and Applied Mathematics, 80. Academic Press, Inc., New York-London, 1978.\\ 
\bibitem[8]{odk4} Kobayashi, S.; Nomizu, K., \textit{Foundations of differential geometry. Vol. II.}, Interscience Tracts in Pure and Applied Mathematics, No. 15 Vol. II, Interscience Publishers John Wiley Sons, Inc., New York-London-Sydney, 1969.\\
\bibitem[9]{odk12} Loos, O., \textit{Reflexion spaces of minimal and maximal torsion}, Math. Z., 106, 1968, 67–72. \\ 
\bibitem[10]{odk8} Loos, O., \textit{Spiegelungsraume und homogene symmetrische Raume}, Math. Z., 99, 1967, 141–170.\\ 
\bibitem[11]{odk3} Loos, O., \textit{Symmetric spaces. I: General theory}, W. A. Benjamin, Inc., New York-Amsterdam, 1969.\\
\bibitem[12]{odk16} Onishchik, A.L.; Vinberg, E.B., \textit{Lie Groups and Lie Algebras III}, Mathematical Sciences, 41. Springer-Verlag, Berlin, 1994.\\
\bibitem[13]{odk5} Sharpe, R. W., \textit{Differential geometry. Cartan's generalization of Klein's Erlangen program}, Graduate Texts in Mathematics, 166. Springer-Verlag, New York, 1997.\\ 
\bibitem[14]{odk7} Zalabova, L. \textit{Parabolic symmetric spaces}, Ann. Global Anal. Geom. 37 (2010), no. 2, 125–141.\\
\bibitem[15]{odk13} Zalabova, L., \textit{Symmetries of parabolic contact structures }, J. Geom. Phys. 60 (2010), no. 11, 1698–1709.\\ 
\bibitem[16]{odk1} Zalabova, L., \textit{Symmetries of parabolic geometries} Differential Geom. Appl. 27 (2009), no. 5, 605–622.\\ 
\end{thebibliography}
\end{document}